\numberwithin{equation}{section}
\newtheorem{theorem}{Theorem}[section]
\newtheorem{corollary}[theorem]{Corollary}
\newtheorem{lemma}[theorem]{Lemma}
\newtheorem{prop}[theorem]{Proposition}
\theoremstyle{definition}
\theoremstyle{definition}
\theoremstyle{definition}
\def\dashint{\operatorname%
{\,\,\text{\bf--}\kern-.98em\DOTSI\intop\ilimits@\!\!}}
\newcommand{\abs}[1]{\lvert#1\rvert}
\newcommand{\norm}[1]{\lVert#1\rVert}
\def\R{\mathbb{R}}      
\def\W{\Omega}          
\def\w{\omega}          
\def\esssup{\text{ess sup}}  
\def\bR{\mathbb{R}}
\begin{document}

\title[the Naiver-Stokes equations]{Interior and boundary regularity for the Navier-Stokes equations in the critical Lebesgue spaces}

\
\author[H. Dong]{Hongjie Dong}
\address[H. Dong]{Division of Applied Mathematics, Brown University, 182 George Street, Providence, RI 02912, USA}

\email{Hongjie\_Dong@brown.edu}

\thanks{H. Dong was partially supported by the NSF under agreement DMS-1600593.}

\author[K. Wang]{Kunrui Wang}
\address[K. Wang]{Division of Applied Mathematics, Brown University, 182 George Street, Providence, RI 02912, USA}

\email{Kunrui\_Wang@brown.edu}

\thanks{K. Wang was partially supported by the NSF under agreement DMS-1600593.}

\subjclass[2010]{Primary }

\date{\today}




\begin{abstract}
We study regularity criteria for the $d$-dimensional incompressible Navier-Stokes equations. We prove if $u\in L_{\infty}^tL_d^x((0,T)\times\R^d_+)$ is a Leray-Hopf weak solution vanishing on the boundary and the pressure $p$ satisfies a local condition $\|p\|_{L_{2-1/d}(Q(z_0,1)\cap (0,T)\times \R^d_+)}\leq K$ for some constant $K>0$ uniformly in $z_0$, then $u$ is regular up to the boundary in $(0,T)\times \R^d_+$. Furthermore, when $T=\infty$, $u$ tends to zero as $t\rightarrow \infty$. We also study the local problem in half unit cylinder $Q^+$ and prove that if $u\in L^t_{\infty}L^x_d(Q^+)$ and $ p\in L_{2-1/d}(Q^+)$, then $u$ is H\"{o}lder continuous in the closure of the set $Q^+(1/4)$. This generalizes a result by Escauriaza, Seregin, and \v{S}ver\'{a}k to higher dimensions and domains with boundary.
\end{abstract}
\maketitle

\section{Introduction}
In this paper we discuss the incompressible Navier-Stokes equations in $d$ spatial dimension with unit viscosity and zero external force:
\begin{equation}\label{NS1}
\partial_tu+u\cdot\nabla u- \Delta u +\nabla p=0, \quad \text{div } u=0
\end{equation}
for $x\in\W$ and $t> 0$ with the initial condition
\begin{equation}\label{NS2}
u(0,x) = a(x)  \text{  in } \W.
\end{equation}
Here $u$ is the velocity and $p$ is the pressure. We consider three kinds of domains: the whole space $\W: =\mathbb{R}^d$, the half space  $\W: =\mathbb{R}^d_+$, and the half cylinder $\W: = Q^+$.

For both $\W = \mathbb{R}^d_+$ and $Q^+$, we assume that $u$ satisfies the zero Dirichlet boundary condition:
\begin{equation}\label{NS3}
u = 0  \text{  on }   \{x_d=0\}\cap\partial\W.
\end{equation}

For $d=3$, the global existence of strong solutions to the Navier-Stokes equations has long been recognized as an important fundamental problem in fluid dynamics and is still widely open.  The local solvability, assuming a sufficiently regular initial data $a$, is well known  (see \cite{Kato1, Giga1, Taylor1, Koch1}). Indeed, the local solution is unique and smooth in both spatial and time variables.

This paper starts from another important type of solutions called {\sl Leray-Hopf} weak solutions. See Section \ref{LH_def} for the notation and definition. In the pioneering works of Leray and Hopf, it is shown that for any divergence-free vector field $a\in L_2$, there exists at least one Leray-Hopf weak solution of the Cauchy problem (\ref{NS1})-(\ref{NS2}) on $(0,\infty)\times\R^d$. Although the problems of uniqueness and regularity of Leray-Hopf weak solutions are still open, since the seminal work of Leray and Hopf, there are extensive literatures on conditional results under various criteria. The most well-known condition is the so-called Ladyzhenskaya-Prodi-Serrin condition, which requires for some $T>0$,
\begin{equation}
\label{serrin1}
u\in L^t_rL^x_q(\R^{d+1}_T),
\end{equation}
where the pair $(r,q)$ satisfies
\begin{equation}
\label{serrin2}
\frac{2}{r}+\frac{d}{q}\leq 1,\quad q\in (d,\infty].
\end{equation}
Under the conditions (\ref{serrin1}) and (\ref{serrin2}), the uniqueness of Leray-Hopf weak solutions was proved by Prodi \cite{Refer22} and Serrin \cite{Refer28}, and the smoothness was obtained by Ladyzhenskaya \cite{Refer15}. For further results, we refer the reader to \cite{Refer8,Refer30,Refer31,Refer2} and references therein. Note the borderline case $(r,q) = (\infty,d)$ is not included in (\ref{serrin2}). This subtle case is of much interest  since we cannot obtain a proof  from usual methods  using the local smallness of certain norms of $u$, which are invariant under the natural scaling
\begin{equation}
\label{natural_scaling}
u(t,x)\rightarrow \lambda
u(\lambda^2t,\lambda x),\quad p(t,x)\rightarrow \lambda^2 p(\lambda^2 t, \lambda x).
\end{equation}
For $d=3$, this case was studied  by Escauriaza, Seregin, and \v{S}ver\'{a}k in a remarkable paper \cite{Seregin1}. The main result of \cite{Seregin1} is the following theorem.

\begin{theorem}[Escauriaza, Seregin, and \v{S}ver\'{a}k]
	\label{ESS} Let $d=3$. Suppose that $u$ is a Leray-Hopf weak solution of the Cauchy problem (\ref{NS1})-(\ref{NS3}) in $(0,T)\times \R^3$ and $u$ satisfies the condition (\ref{serrin1}) with $(r,q) = (\infty,3)$. Then $u\in L_5((0,T)\times\R^3)$, and hence it is smooth and unique in $(0,T)\times\R^3$.
\end{theorem}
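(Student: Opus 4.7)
The plan is to argue by contradiction using a blow-up analysis combined with a backward uniqueness theorem for parabolic operators. First, I would suppose, toward a contradiction, that $u$ is not smooth on $(0,T)\times\R^3$. Since $u\in L_\infty^tL_3^x$ and subcritical Ladyzhenskaya-Prodi-Serrin results guarantee regularity below any first singular time, there must exist a first singular time $T_*\in(0,T]$ together with a point $x_*\in\R^3$ at which the critical $L_3$-norm concentrates in the sense that $\limsup_{r\to 0}\|u(T_*-r^2,\cdot)\|_{L_3(B_r(x_*))}>0$.

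Next, I would set up a parabolic blow-up sequence at $(T_*,x_*)$. With $\lambda_k\to 0^+$, define
\[
u_k(t,x):=\lambda_k u(T_*+\lambda_k^2 t,\,x_*+\lambda_k x),\qquad p_k(t,x):=\lambda_k^2 p(T_*+\lambda_k^2 t,\,x_*+\lambda_k x),
\]
which are again suitable weak solutions of (\ref{NS1}). Since $L_\infty^tL_3^x$ is invariant under the natural scaling (\ref{natural_scaling}), the sequence $\{u_k\}$ is uniformly bounded in this critical space. Combining this with the local energy inequality and the Caffarelli-Kohn-Nirenberg $\varepsilon$-regularity criterion, I would extract a weak limit $v$ which is an ancient suitable weak solution on $(-\infty,0]\times\R^3$, smooth away from a compact singular set at time zero, and satisfying $v\in L_\infty^tL_3^x$. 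A key point to verify is that $v(0,\cdot)\equiv 0$ on $\R^3\setminus\{0\}$, which follows from the critical-norm concentration at $(T_*,x_*)$ combined with strong local compactness of $u_k$ away from the origin.

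The main obstacle, and the heart of the argument, is to upgrade $v(0,\cdot)=0$ (away from the origin) to $v\equiv 0$ on all of $(-\infty,0]\times\R^3$. For this I would follow a two-step scheme: first, a unique continuation theorem for parabolic equations across an interior spatial slab, applied near a regular point of $v$, to propagate the vanishing of $v$ at $t=0$ slightly into the past in a small ball; and second, a backward uniqueness theorem for parabolic inequalities of the form $|\partial_t w-\Delta w|\leq C(|\nabla w|+|w|)$ on an exterior cylinder $(-\tau,0)\times(\R^3\setminus B_R)$ applied to the vorticity $\omega=\nabla\times v$, which decays at infinity and vanishes at $t=0$. Both ingredients rest on delicate Carleman estimates; the applicability of the backward uniqueness step requires boundedness of the drift term, which I would extract from the critical $L_3$ bound together with higher integrability of $v$ in the exterior region coming from $\varepsilon$-regularity.

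Once $v\equiv 0$ is established, the contradiction is immediate, since by construction $v$ must inherit a nontrivial fraction of the concentrated $L_3$-mass at the origin and cannot vanish identically. Hence no singular time $T_*$ exists, so $u$ is smooth on $(0,T)\times\R^3$. The $L_5$ bound and uniqueness then follow from the standard Ladyzhenskaya-Prodi-Serrin theory applied with any subcritical pair, for instance $(r,q)=(5,5)$. The step I anticipate requiring the most care is the extraction of a \emph{nontrivial} limit $v$: critical-norm boundedness only yields compactness up to scaling and translation, so the blow-up must be arranged so that some scale-invariant quantity stays bounded below along the sequence, which is essentially the converse of the $\varepsilon$-regularity criterion and uses the assumed singularity in an essential way.
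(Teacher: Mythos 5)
Your proposal reproduces the right high-level skeleton of the Escauriaza--Seregin--\v{S}ver\'{a}k scheme: parabolic blow-up at a presumed singular point, extraction of an ancient suitable weak solution $v$ via the local energy inequality and an $\varepsilon$-regularity criterion, a backward uniqueness theorem for the vorticity on an exterior parabolic cylinder, and propagation of the vanishing back in time. (You propose spatial unique continuation for the latter step, which is the route taken in \cite{Seregin1}; the present paper's higher-dimensional analogue in Proposition \ref{prop_u_infty} instead uses spatial analyticity of strong solutions together with weak--strong uniqueness. Both are standard and work.) Note also that the paper does not prove Theorem \ref{ESS} at all --- it is quoted as background from \cite{Seregin1}; what is proved here is the $d\ge 4$ generalization in Theorem \ref{thm_main}, whose proof shares precisely the machinery you sketch.

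The substantive gap is in your closing contradiction step. You posit an a priori local $L_3$-concentration at the singular point, $\limsup_{r\to 0}\|u(T_*-r^2,\cdot)\|_{L_3(B_r(x_*))}>0$, and then argue that the blow-up limit $v$ must inherit a nontrivial fraction of this mass, so that $v\equiv 0$ is contradictory. This does not go through. First, such local $L_3$-concentration at a putative singular point is not an available a priori fact here; establishing it is essentially of the same depth as the theorem being proved, so invoking it risks circularity. Second, even granting it, the rescaled solutions $u_k$ converge only \emph{weakly} in the critical space $L_\infty^t L_3^x$, and weak convergence does not preserve concentrated $L_3$-mass; $v$ can (and, in this argument, does) vanish identically. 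The correct deduction runs in the opposite direction, exactly as in Step 1 of the proof of Theorem \ref{thm_main}: having shown $u_\infty\equiv 0$, one uses the strong local convergence $u_k\to 0$ in $C([-1/16,0];L_{q}(B(x_0,1/4)))$ to conclude that the scale-invariant quantities $C^{(+)}, D^{(+)}$ of $u_k$ become small at some fixed radius $r_0$ for $k$ large; the $\varepsilon$-regularity criterion (Theorem \ref{thm_holder_whole} or \ref{thm_holder_half}) then makes $u_{k_0}$ bounded near the origin for some $k_0$; undoing the scaling gives $\sup_{\omega(Z_0,\lambda_{k_0}\tilde r)}|u|<\infty$, directly contradicting the assumption that $(T_*,x_*)$ is a singular point. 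You should discard the concentration hypothesis and close the argument by applying $\varepsilon$-regularity to the rescaled $u_k$, rather than by trying to make $v$ nontrivial.
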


Before we explain Theorem \ref{ESS}, we shall recall another important concept involved in the proof, the partial regularity of weak solutions. The study of partial regularity of the Navier-Stokes equations was originated by Scheffer in a series of papers  \cite{Refer23,Refer24,Refer25}. In three space dimensions, he established various partial regularity results for weak solutions satisfying the so-called local energy inequality. Later in a celebrated paper \cite{Caf1}, Caffarelli, Kohn, and Nirenberg first introduced  the notation of {\sl suitable weak solutions}. They called a pair $(u,p)$ a suitable weak solution if $u$ has finite energy norm, $p$ belongs to the Lebesgue space $L_{5/4}$, and $(u,p)$ is  a pair of weak solution to the Navier-Stokes equations and satisfies a local energy inequality. They proved that for any suitable weak solution $(u,p)$,  there exists an open subset in which the velocity field $u$ is H\"{o}lder continuous, and the complement of it has zero 1D Hausdorff measure. In \cite{Refer21}, with zero external force and assuming $p\in L_{3/2}$, Lin gave a more direct and concise  proof for Caffarelli, Kohn, and Nirenberg's result. A detailed treatment was later given by Ladyzhenskaya and Seregin in \cite{Refer18}. Thereafter, Caffarelli, Kohn, and Nirenberg's partial regularity result for the 3D time-dependent Navier-Stokes equations was extended up to the flat boundary by Seregin \cite{Refer23b} and to the $C^2$ boundary by Seregin, Shilkin, and Solonnikov \cite{Refer27b}. The key step in the proofs of partial regularity results is to establish certain $\epsilon$-regularity criteria.  That is, intuitively speaking, if some scale invariant quantities are small then the solution is locally regular. Such results played a crucial part in the proof of \cite{Seregin1}.  For the higher dimensional boundary partial regularity cases, Dong and Gu \cite{Dong2} studied 4D time-dependent and 6D stationary incompressible Navier-Stokes equations. They proved that in both cases, the singular points sets have zero 2D Hausdorff measure up to the boundary. For the 4D time-dependent case, they obtained two boundary $\epsilon$-regularity criteria  \cite[Theorems 1.1 and 1.2]{Dong2}. In Sections \ref{sec_holder_1} and \ref{sec_holder_2}, we will extend  \cite[Theorem 1.2]{Dong2} to higher dimensions assuming we have  certain norms of $u$ and $p$ bounded  and later use those criteria as  tools to prove the main results of this paper.

Back to Theorem \ref{ESS}, the proofs in \cite{Seregin1} are highly nontrivial and rely on certain regularity criteria in the light of \cite{Caf1}, \cite{Refer21}, and \cite{Refer18}. These regularity criteria may break down when the dimension increases, which inspires us to search for a way to modify and generalize the argument.  Another main ingredient of the proof is a backward uniqueness theorem of heat equations with bounded lower order coefficients in the half space (see \cite{Seregin2}). We will also use this part of argument in the proof of our theorems. Under an additional assumption on the pressure, there are some extensions of Theorem \ref{ESS} to the half space case and the bounded domain case; we refer the reader to \cite{Seregin3} and  \cite{Refer19} for some results in this direction. See also \cite{MR2784068, MR3475661, MR3629487, MR3713543, MR3803715, 2018arXiv180203164A} and the references therein for other related results.
As for the extension to the higher dimensional Navier-Stokes equations, in \cite{Dong1}, Dong and Du used Schoen's trick to establish another regularity criterion similar to Theorem \ref{ESS}  and extended this result to $\R^d$ for $d\geq 3$. Unfortunately, there is a gap in the proof of \cite[Lemma 3.2]{Dong1}. We borrow the idea from \cite{Dong1}, that is, to find a priori $L_{\infty}$ bound only depends on the $L^t_{\infty}L^x_d$ norm and the dimension. Instead of using Schoen's trick, we prove two  $\epsilon$-regularity criteria adapted from  \cite[Theorem 1.2]{Dong2} to provide a unified approach to obtain results in the spirit of Theorem \ref{ESS} for  both whole space $\R^d$ and half space $\R^d_+$ for $d\geq 4$.

We now state the main results of the article. The notation in Theorems \ref{thm_main} and \ref{thm_main3} is introduced in Section \ref{prelim}. A remark is that though we state and prove the following theorems for $d\geq 4$, with a minor modification of the exponents in the scale invariant quantities we defined in Section \ref{subsec_scale}, we can give an alternative proof of the case when $d=3$ which has been proved before in \cite{Seregin1} and \cite{Seregin3}.

\begin{theorem}
	\label{thm_main}
Let $d\geq 4$ be an integer.  Let $\Omega=\mathbb{R}^d$ or $\mathbb{R}^d_+:=\{x\in \mathbb{R}^d:x_d>0\}$ and $ T\in(0,\infty]$. Suppose   $(u,p)$ is a pair of Leray-Hopf weak solution to the Cauchy problem in $(0,T)\times \Omega$. If $u$ satisfies the following condition for some $K>0$,
\begin{equation}\label{eqn_Ldcondition}
u \in L_{\infty}^tL_d^x((0,T)\times \Omega), \quad \norm{u}_{L_{\infty}^tL_d^x((0,T)\times \Omega)}\leq K.
\end{equation}
When $\Omega=\mathbb{R}^d_+$, we additionally assume that  $p$ satisfies the local condition: for any $z_0\in (0,T)\times\mathbb{R}_+^d$,
\begin{equation}\label{presure_bd}
\norm{p}_{L_{2-1/d}(Q(z_0,1)\cap (0,T)\times \R^d_+)}\leq K.
\end{equation}
and $u$ satisfies the boundary condition
\begin{equation*}
u(t,x)=0 \quad \text{ on } x_d=0,\  0\leq t\leq T.
\end{equation*}
Then $u \in L_{d+2}((0,T)\times\Omega)$, and hence it is regular up to the boundary in $(0,T)\times \Omega$.
Moreover, if $T=\infty$, we have
	\begin{equation}
	\label{eqn_main_thm2}
	\lim_{t\rightarrow \infty}\norm{u(t,\cdot)}_{L_{\infty}^x(\W)}=0.
	\end{equation}
\end{theorem}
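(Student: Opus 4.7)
\medskip
\noindent\textbf{Proof proposal.} I would follow the Escauriaza--Seregin--\v{S}ver\'ak scheme: first establish an a priori $L_\infty$ bound that depends only on $K$ and $d$, then blow up at a putative first singular time to extract a bounded mild ancient solution, and finally invoke the backward uniqueness theorem of \cite{Seregin2} to obtain a contradiction. The boundary and interior $\epsilon$-regularity criteria of Sections \ref{sec_holder_1} and \ref{sec_holder_2} (adapted from \cite[Theorem 1.2]{Dong2} to $d\geq 4$) play the technical role; their hypotheses involve scale-invariant quantities that are controllable by $\|u\|_{L_\infty^tL_d^x}$ and, in the half-space case, by $\|p\|_{L_{2-1/d}}$ via standard parabolic pressure estimates.

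The first step is the a priori bound $\|u\|_{L_\infty((\delta,T)\times\Omega)}\leq C(K,d,\delta)$ for every $\delta>0$. On any parabolic cylinder $Q(z_0,r)$, H\"older's inequality converts the global $L_d$ bound on $u(t,\cdot)$ into bounds on the scale-invariant means of $u$ appearing in the $\epsilon$-regularity criteria, and (\ref{presure_bd}) provides the corresponding bound on the pressure. Were a uniform $L_\infty$ bound to fail, one could produce blow-up points $(z_k,\lambda_k)$ for which the renormalizations
\begin{equation*}
u_k(t,x)=\lambda_k u(t_k+\lambda_k^2 t,\,x_k+\lambda_k x),\qquad p_k(t,x)=\lambda_k^2 p(t_k+\lambda_k^2 t,\,x_k+\lambda_k x)
\end{equation*}
satisfy $|u_k(0,0)|\to\infty$; a subsequential limit would be a bounded mild solution $\tilde u$ with $\|\tilde u\|_{L_\infty^t L_d^x}\leq K$, but then the $L_d$ mass of $\tilde u$ on small parabolic cylinders tends to zero by absolute continuity of the integral, so the $\epsilon$-regularity criterion applies at those scales and $\tilde u$ is H\"older continuous near the origin, contradicting the blow-up.

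Next, suppose $u$ is not regular up to $T$ and let $(T^*,x^*)$, $x^*\in\overline{\Omega}$, be a first singular point. Rescaling around $(T^*,x^*)$ with scales $\lambda_k\to 0$ chosen so that the normalized solutions are of unit size, the a priori $L_\infty$ bound for $t<T^*$ and the higher-order parabolic regularity it implies allow one to extract a smooth bounded mild ancient solution $(\tilde u,\tilde p)$ on $(-\infty,0]\times\tilde\Omega$, where $\tilde\Omega=\R^d$ or $\R^d_+$ according as $x^*$ is interior or lies on the boundary, with $|\tilde u(0,0)|\geq 1/2$. Scale invariance of $\|u\|_{L_\infty^tL_d^x}$ ensures $\|\tilde u(t,\cdot)\|_{L_d(\tilde\Omega)}\leq K$ uniformly in $t\leq 0$, hence $\tilde u(t,x)\to 0$ as $|x|\to\infty$. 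Applying unique continuation and backward uniqueness from \cite{Seregin1,Seregin2,Seregin3} to the vorticity equation satisfied by $\tilde \omega=\nabla\times\tilde u$ (which has bounded coefficients thanks to the $L_\infty$ control on $\tilde u$) forces $\tilde \omega\equiv 0$ on $(-\infty,0]\times\tilde\Omega$; incompressibility and the $L_d$ decay then force $\tilde u\equiv 0$, contradicting the normalization.

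The decay (\ref{eqn_main_thm2}) is proved by the same blow-up/backward-uniqueness scheme applied to a hypothetical sequence $t_k\to\infty$ with $\|u(t_k,\cdot)\|_{L_\infty}\geq \epsilon$: translating in time and renormalizing produces another nontrivial bounded mild ancient solution, and the same argument yields a contradiction. The principal obstacle is the first step, namely extracting an $L_\infty$ bound from the critical $L_\infty^tL_d^x$ norm without recourse to Schoen's trick (the source of the gap in \cite{Dong1}); this is particularly delicate in the half-space, where one must verify that the $L_{2-1/d}$ pressure estimate passes cleanly through the rescaling so that the boundary $\epsilon$-regularity criteria can be applied to the limit profile with the proper boundary behavior.
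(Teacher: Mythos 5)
Your proposal has the right basic scaffolding (blow-up plus backward uniqueness, in the ESS spirit), but the execution has several genuine gaps that diverge from what the paper actually does.

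\textbf{Step 1 is internally inconsistent and circular.} You claim $|u_k(0,0)|\to\infty$ and then immediately that a subsequential limit is a \emph{bounded} mild solution; if the limit exists in any pointwise sense it cannot be bounded at the origin, and if it is only a weak limit the subsequent application of an $\epsilon$-regularity criterion ``near the origin'' is meaningless. More importantly, the assertion that ``the $L_d$ mass of $\tilde u$ on small parabolic cylinders tends to zero by absolute continuity of the integral, so the $\epsilon$-regularity criterion applies'' begs the question: $\|u\|_{L^t_\infty L^x_d}$ being finite gives a scale-invariant bound, not smallness, of the fluxes $A,E,D$ on small cylinders --- this is exactly the endpoint obstruction the theorem is about, and absolute continuity of the $L_d$ integral at a fixed time does not yield smallness of the \emph{spacetime} quantities $A(\rho),E(\rho),D(\rho)$ needed by Theorems \ref{thm_holder_whole} and \ref{thm_holder_half}. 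The $L_d$-absolute-continuity observation is used in the paper, but only to prove $u_\infty(0,\cdot)=0$ in the proof of Proposition \ref{prop_u_infty}; that single ingredient is not enough and must be combined with backward uniqueness to prove the full ancient limit vanishes.

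\textbf{Step 2 uses the wrong normalization.} You extract a limit with $|\tilde u(0,0)|\geq 1/2$. Since $Z_0=(T_0,X_0)$ is a \emph{singular} point, the rescaled solutions are singular at the origin at time zero and $u(T_0,X_0)$ need not even be defined; there is no meaningful pointwise lower bound there. The paper's argument is the opposite: it proves $u_\infty\equiv 0$ (Proposition \ref{prop_u_infty}), deduces strong convergence $u_k\to 0$ and hence $\limsup_k C^+(r,z_0,u_k,p_k)=0$, and only then applies the $\epsilon$-criterion to some $u_{k_0}$ to contradict the singularity at $Z_0$. The ``nontrivial ancient solution'' route requires a Schoen-type selection of blow-up points with $\lambda_k$ tied to $|u(z_k)|$ --- which is precisely the mechanism with the acknowledged gap in \cite{Dong1} that the paper deliberately avoids by working with the \emph{fixed} center $Z_0$ and arbitrary $\lambda_k\to 0$.

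\textbf{Spatial decay from $L_d$ alone is false.} The claim ``$\|\tilde u(t,\cdot)\|_{L_d}\leq K$ uniformly $\Rightarrow$ $\tilde u(t,x)\to 0$ as $|x|\to\infty$'' does not hold for mere $L_d$ functions. The paper needs to first establish pointwise boundedness plus higher-derivative bounds of $u_\infty$ in the far field (via Lemma \ref{lem_AECDsmall} and the $\epsilon$-criteria, then Stokes regularity in \eqref{eqn_stoke}) before the backward uniqueness theorem can be applied to the vorticity on an exterior half-space. This is a substantive intermediate step, not a consequence of the $L_d$ bound.

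\textbf{Two other structural discrepancies.} The pressure assumption \eqref{presure_bd} does not ``pass cleanly'' through the rescaling; Lemma \ref{lem_bdd} performs a nontrivial case analysis (interior versus boundary center, $r$ versus $x_{0,d}$) and the corollaries that propagate $D^+$-bounds across scales are needed to get uniform $D^+$ estimates for the $u_k$. Finally, for $T=\infty$ the paper's proof is a direct scaling argument: interpolation gives a global $L_4$ bound on $u_\lambda$, hence smallness of $C^+$ past some $T_\lambda$, hence a quantitative $L_\infty$ bound $N(d)/\lambda$ on $[\lambda^2 T_\lambda,\infty)\times\Omega$, and then one sends $\lambda\to\infty$. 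Your proposed blow-up-at-$t_k\to\infty$ contradiction is a different route; it plausibly works, but it is not the argument given, and you would still need to show where the extracted ancient solution lives (whole space versus half-space) and that it is nontrivial --- which again needs a well-posed normalization.
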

\begin{theorem}
	\label{thm_main3}
	Let $d\geq 4$ be an integer. Suppose $(u,p)$ is a pair of Leray-Hopf weak solution to the Navier-Stokes  problem (\ref{NS1}) in $\R^d_+$. Let $Q^+:=\{z=(t,x):x\in\mathbb{R}^d, \abs{x}<1, x_d>0, -1<t<0\}$. Assume $(u,p)$ satisfies the conditions:
\begin{equation*}
u\in L^t_{\infty}L^x_d(Q^+),\quad p\in L_{2-1/d}(Q^+).
\end{equation*}
and the boundary condition
\begin{equation*}
u(t,x)=0 \quad \text{ on } x_d=0,\ |x|\leq 1,\ -1\leq t\leq 0.
\end{equation*}
Then $u$ is H\"{o}lder continuous in the closure of the set
\begin{equation*}
Q^+(1/4):=\{z=(t,x):x\in\mathbb{R}^d, \abs{x}<1/4, x_d>0,  -(1/4)^2<t<0\}.
\end{equation*}
\end{theorem}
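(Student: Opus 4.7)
The plan is to establish H\"older continuity at each point of $\overline{Q^+(1/4)}$ by invoking the boundary and interior $\epsilon$-regularity criteria developed in Sections~\ref{sec_holder_1}--\ref{sec_holder_2}, which are adapted from \cite[Theorem~1.2]{Dong2} to dimension $d\geq 4$ and to half-space cylinders. These criteria say that, under the standing bounds $\norm{u}_{L_\infty^tL_d^x(Q^+)}\leq K$ and $\norm{p}_{L_{2-1/d}(Q^+)}\leq K$, there exists a universal $\epsilon_0=\epsilon_0(d,K)$ such that if the scale-invariant excess
\[
\Phi(u,p;z_0,r)=r^{1-d}\int_{Q^+(z_0,r)}|u|^3\,dz+r^{2-2/d-d}\int_{Q^+(z_0,r)}|p|^{2-1/d}\,dz
\]
is below $\epsilon_0$ at some scale $r>0$, then $u$ is H\"older continuous at $z_0$ with a uniform modulus. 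Since $\overline{Q^+(1/4)}$ is compact, a covering argument then reduces the theorem to verifying this smallness at each $z_0\in\overline{Q^+(1/4)}$.

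Verification proceeds by contradiction and blow-up, following the paradigm of \cite{Seregin1}. If the smallness fails, there exist $z_k=(t_k,x_k)\in\overline{Q^+(1/4)}$ and $r_k\downarrow 0$ with $\Phi(u,p;z_k,r_k)\geq\epsilon_0$; after a subsequence $z_k\to z^*$. Set
\[
u_k(t,x)=r_k\,u(t_k+r_k^2t,x_k+r_kx),\qquad p_k(t,x)=r_k^2\,p(t_k+r_k^2t,x_k+r_kx).
\]
Scale invariance of $L_\infty^tL_d^x$ gives a uniform bound on $\norm{u_k}_{L_\infty^tL_d^x}$ over every fixed cylinder, and combined with the local energy inequality and the parabolic Aubin--Lions lemma, one extracts a subsequential limit $(\bar u,\bar p)$. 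Depending on whether $\mathrm{dist}(x^*,\{x_d=0\})>0$ or $=0$, this limit is an ancient mild Leray--Hopf solution on $(-\infty,0)\times\R^d$ or on $(-\infty,0)\times\R^d_+$ with zero Dirichlet data, lying in $L_\infty^tL_d^x$. Strong convergence in $L^3_{\mathrm{loc}}$ transfers the lower bound across the limit so that $\bar u\not\equiv 0$.

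The contradiction is obtained by showing $\bar u\equiv 0$. The same $\epsilon$-regularity criteria applied to $\bar u$ yield partial regularity, so that $\bar u$ is bounded and smooth outside a small singular set; in particular it is bounded on a tail $\{t\leq -T_0\}$. Viewing $\bar u$ there as a solution of $\partial_t\bar u-\Delta\bar u=-\bar u\cdot\nabla\bar u-\nabla\bar p$ with bounded lower-order coefficient, the backward uniqueness theorem of Seregin--\v{S}ver\'ak \cite{Seregin2} in a half-space forces $\bar u=0$ on that tail, and unique continuation in space and time propagates the vanishing to the whole ancient cylinder, the desired contradiction.

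The main obstacle is the pressure. Because $2-1/d<(d+2)/2$, the norm $\norm{p}_{L_{2-1/d}}$ is subcritical and can blow up under the natural rescaling; uniform bounds on $p_k$ in $L_{2-1/d}$ are therefore unavailable. We overcome this by a local decomposition $p=p_{\mathrm{nl}}+p_h$ on each sub-cylinder, where $p_{\mathrm{nl}}$ is the (half-space) Newtonian potential of $\mathrm{div}\,\mathrm{div}(u\otimes u)$ and $p_h$ is a harmonic remainder. Calder\'on--Zygmund estimates reduce $p_{\mathrm{nl}}$ to a scale-invariant quantity of $u$, which is controlled by $L_\infty^tL_d^x$; meanwhile interior mean-value estimates give decay of $p_h$ on smaller cylinders starting from the initial $L_{2-1/d}$ bound. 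Executing this decomposition compatibly with the Neumann-type boundary condition that $p$ inherits along $\{x_d=0\}$, and matching it to the scale-invariant quantities appearing in $\Phi$, is the technical heart of the argument and is where the extension of \cite[Theorem~1.2]{Dong2} to higher dimensions with boundary is essential.
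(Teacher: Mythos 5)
Your overall strategy (a blow-up argument near a would-be singular point, backward uniqueness applied to the tail of the ancient limit, and a contradiction with the $\epsilon$-regularity criteria from Sections~\ref{sec_holder_1}--\ref{sec_holder_2}) is the same as the paper's, and most of the outline is sound. However, there is a genuine gap in your contradiction mechanism, and your treatment of the boundary pressure term diverges from what actually works.

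\textbf{The contradiction step.} You want to ``transfer the lower bound'' $\Phi(u,p;z_k,r_k)\geq\epsilon_0$ to conclude $\bar u\not\equiv 0$ and then derive $\bar u\equiv 0$ from backward uniqueness. But $\Phi$ contains a pressure term. Even with strong $L^3_{\text{loc}}$ convergence of $u_k$, the $u$ part of $\Phi$ could tend to zero while the pressure part carries the entire lower bound, since there is nothing forcing the rescaled pressure to converge strongly (it is only weakly compact in $L_{2-1/d}$). Thus $\Phi\geq\epsilon_0$ does not imply $\bar u\not\equiv 0$, and the asserted contradiction with $\bar u\equiv 0$ is unsupported. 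The paper reaches the contradiction differently: having established $u_\infty\equiv 0$, one uses the strong convergence to conclude that $C^+(r,z_0,u_k,p_k)\to 0$, and then---crucially---an iterative pressure-decay estimate (Lemma~\ref{lem_CD}, Lemma~\ref{lem_halfspace6}, Corollary~\ref{col6_half}, and the argument of Lemma~\ref{lem_AECDsmall}) shows that the \emph{bounded} $D^+$ also becomes small at a smaller scale once $C^+$ does. Only then can the $\epsilon$-regularity Theorem~\ref{thm_holder_half} be applied to the rescaled solution to conclude that $u$ is H\"older continuous near the purported singular point $Z_0$, contradicting that $Z_0$ is singular. Without this iteration step, no contradiction is available.

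\textbf{The pressure decomposition near the boundary.} The decomposition into a Newtonian potential of $\operatorname{div}\operatorname{div}(u\otimes u)$ plus a harmonic remainder, with Calder\'on--Zygmund and interior mean-value estimates, is exactly what the paper does in the interior case (Lemma~\ref{lem_CD}). It does not adapt cleanly to the half-space with a Neumann-type condition, and the paper does not attempt this. Instead, the boundary case (Lemma~\ref{lem_halfspace6}) splits the pressure using a linear Stokes problem (Lemmas~\ref{lem_halfspace4} and \ref{lem_halfspace5}): $p_1$ is the pressure of the Stokes system with forcing $-u\cdot\nabla u$ and zero boundary data, and $p_2=p-p_1-[p]$ solves a homogeneous Stokes system, for which local boundary regularity gives decay. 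You should also use the oscillation $p-[p]_{x_0,r}(t)$, not $|p|$, in your excess $\Phi$; pressure is defined only up to a time-dependent constant, so the un-normalized quantity has no meaningful smallness.

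Finally, a smaller point of framing: the paper rescales at a fixed singular point $Z_0$ at the first blow-up time $T_0$ (so $u_k$ is smooth for $t<0$), rather than at a moving sequence $z_k$ where smallness of $\Phi$ fails. The fixed-center version is what lets one use $\lambda_k\to 0$ and the definition of $Z_0$ as a singular point to close the contradiction.
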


We give a brief description of our argument for the main theorem. By adding conditions (\ref{eqn_Ldcondition})-(\ref{presure_bd}), we extend  \cite[Theorem 1.2]{Dong2} to an $\epsilon$-regularity criterion which reads that if certain scale invariant quantities are small then the solution is locally H\"{o}lder continuous. As in \cite{Seregin1}, we start with proof by contradiction and blow up the solution $(u,p)$ near a singular point at the first blow-up time. We can show the scale invariant quantities are uniformly bounded along a blow-up sequence $(u_k,p_k)$, hence this implies there exists a pair of limiting suitable weak solution $(u_{\infty},p_{\infty})$ to the Navier-Stokes equations. Furthermore, outside of a large cylinder, we can show the scale invariant quantities are indeed uniformly small for all $(u_k,p_k)$'s. Thus we can use the $\epsilon$-regularity criterion to get local H\"{o}lder continuity and uniform local $L_{\infty}$ bound for $u_k$'s. Together with $L_p$-convergence, we can show the local boundedness of  $u_{\infty}$ as well as $u_{\infty}(0,\cdot)=0$ by reversing the blow-up procedure. Then by applying the  backward uniqueness theorem proved in \cite{Seregin2} to the vorticity equation, we can see that $\text{curl }u_{\infty}=0$ in the outside region for all time, which further implies that $u_{\infty}\equiv 0$ by using the spatial analyticity of strong solutions and the weak-strong uniqueness of the Navier-Stokes equations. The rest part of the proof follows the approach in \cite{Dong1}. Utilizing the $\epsilon$-regularity criteria proved in Sections \ref{sec_holder_1} and \ref{sec_holder_2}, we first show there exists a $u_{k_0}$ that is regular around the origin, hence this  contradicts with  the  assumption that $u$ blows up near a singular point. Next we bound the sup norm of $u$ to conclude $u \in L_{d+2}((0,T)\times\Omega)$. For $T=\infty$, a key observation is that $u$ is in $L_4((0,\infty)\times\R^d)$, which implies the smallness of its $L_4$ norm in $(T,\infty)\times\R^d$ for large $T$ and furthermore the smallness of the scale invariant quantities on any cylinder beyond time $T$. Again we can apply the $\epsilon$-criteria to get a uniform $L_{\infty}$ bound on the scaled solutions beyond time $T$. We finally  prove the decay with respect to the time by scaling back to the original $u$.

The remaining part of the paper is organized as follows. We introduce notation and terminologies in Section \ref{prelim}. Sections \ref{sec_holder_1} and \ref{sec_holder_2} are devoted to $\epsilon$-regularity criteria in the whole space and half space, respectively.  We use a three-step approach  to obtain the $\epsilon$-regularity criteria for both the whole space  and the half space. In the first step, we give some estimates of the scale invariant quantities, which are by now standard and essentially follow the arguments in    \cite{Refer21,Refer2b}. In the second step, we establish a  decay estimate of certain scale invariant quantities by using an iteration argument based on the estimates we proved in the first step. In the third step, we apply parabolic regularity to get an  estimate of $L_{2-\frac{1}{d}}$-mean oscillations of $u$, which yields the H\"{o}lder continuity of $u$ according to Campanato's characterization of H\"{o}lder continuous functions.   The main difference between the two cases lies in the treatment of the pressure term. In the interior case, the pressure can be decomposed into a sum of a harmonic function and a term  controlled by $u$ using the Calder\'{o}n-Zygmund estimate. In the boundary case, we need the additional assumption (\ref{presure_bd}) on the pressure to use classical $L_p$ estimates for linear Stokes system to get a more subtle control of the pressure.  In Section \ref{sec_thm1}, we present the proof of Theorem \ref{thm_main} via the blow-up procedure mentioned previously.   Theorem \ref{thm_main3} is another application of the $\epsilon$-regularity criteria we proved in Sections \ref{sec_holder_1} and \ref{sec_holder_2} . We briefly describe the proof of Theorem \ref{thm_main3} in Section \ref{sec_thm2}.
\section{Preliminaries}
Let $\W$ be a domain in $\R^d$ and $\W_T:=(0,T)\times\W$. We denote $\dot{C}_0^{\infty}(\W)$ the space of divergence-free infinitely differentiable vector fields with compact support in $\W$. Let $\dot{J}(\W)$ and $\dot{J}_2^1(\W)$ be the closures of $\dot{C}^{\infty}_0(\W)$ in the spaces $L_2(\W)$ and $W^1_2(\W)$, respectively.
\label{prelim}
\subsection{Leray-Hopf weak solutions.}\label{LH_def}
By a Leray-Hopf weak solution of (\ref{NS1})-(\ref{NS2}) in $\W_T$, we mean a vector field $u$ such that:

	\textsl{i)} $u\in L_{\infty}(0,T;\dot{J}(\W))\cap L_2(0,T;\dot{J}^1_2(\W))$;
	
	\textsl{ii)} the function $t\rightarrow \int_{\W}u(t,x)\cdot w(x)\,dx$ is continuous on $[0,T]$ for any $w\in L_2(\W)$;
	
	\textsl{iii)} the equation (\ref{NS1}) holds weakly in the sense that for any $w\in \dot{C}_0^{\infty}(\W_T)$,
	\begin{equation*}
		\int_{\W_T}(-u\cdot\partial_t w-u\otimes u:\nabla w+\nabla u: \nabla w)\, dx\,dt=0;
	\end{equation*}
	
	\textsl{iv)} The energy inequality:
\begin{equation*}
	\frac{1}{2}\int_{\W}|u(t,x)|^2\, dx+\int_{\W_t}|\nabla u|^2\,dx\,ds \leq \frac{1}{2}\int_{\W}|a(x)|^2\,dx
\end{equation*}
holds for any $t\in[0,T]$, and we have
$$\norm{u(t,\cdot)-a(\cdot)}_{L_2}\rightarrow 0\quad \text{as } t\rightarrow0.$$

When $\W = \R^d$ or $\R^d_+$, for any $a\in\dot{J}(\W)$, there exists at least one Leray-Hopf weak solution to the Cauchy problem (\ref{NS1})-(\ref{NS2}) on $(0,\infty)\times\W$. See \cite{Leray1} and \cite{Hopf1}.

\subsection{Suitable weak solutions}
The definition of suitable weak solutions was introduced in \cite{Caf1} . We say a pair $(u,p)$ is a suitable weak solution of the Navier-Stokes equations on the set $\W_T$ if

	\textsl{i)} $u\in L_{\infty}(0,T;\dot{J}(\W))\cap L_2(0,T;\dot{J}^1_2(\W))$ and $p\in L_{2-1/d}(\W_T)$;

	\textsl{ii)} $u$ and $p$ satisfy equation (\ref{NS1}) in the sense of distribution.

	\textsl{iii)} For any $t\in(0,T)$ and for any nonnegative function $\psi\in C_0^{\infty}(\overline{\W_T})$ vanishing in a neighborhood of the  boundary $\{t=0\}\times \Omega$, the integrals in the following local energy inequality are summable and the inequality holds true:
\begin{align}\label{eqn_sw_energy}
\begin{split}
\text{ess sup}_{0\leq s\leq t}&\int_{\Omega}\abs{u(s,x)}^2\psi(s,x)\,dx+2\int_{\Omega_t}\abs{\nabla u}^2\psi \,dx \,ds\\
&\leq \int_{\Omega_t}\{\abs{u}^2(\psi_t+\Delta\psi)+(\abs{u}^2+2p)u\cdot\nabla\psi\}\,dx \,ds.
\end{split}
\end{align}

\subsection{Scale invariant quantities}
\label{subsec_scale}
In this paper, we write a point in $[0,T]\times\R^d$ as $z=(t,x)=(t,x_1,x_2,...,x_d)=(t,x',x_d)$, where $x'=(x_1,x_2,...,x_{d-1})$.
We shall use the following notation for balls, half balls, spheres, half spheres, parabolic cylinders, half parabolic cylinders, and parabolic boundaries:
\begin{align*}
	&B(\hat{x},r) = \{x\in \mathbb{R}^d \mid \abs{x-\hat{x}}<r\}, \quad B_r = B(r) = B(0,r),\quad B=B(1);\\
	&B^+(\hat{x},r)=\{x\in B(\hat{x},r)\mid x=(x',x_d),x_d>\hat{x}_d\},\\
	&B_r^+ = B^+(r) = B^+(0,r), \quad B^+=B^+(1);\\
	&S(\hat{x},r) = \{x\in \mathbb{R}^d \mid \abs{x-\hat{x}}=r\}, \quad S_r = S(r) = S(0,r),\quad S=S(1);\\
	&S^+(\hat{x},r)=\{x\in S(\hat{x},r)\mid x=(x',x_d),x_d>\hat{x}_d\},\\
	&S^+_r = S^+(r) = S^+(0,r), \quad S^+=S^+(1);\\
	&Q(\hat{z},r)=(\hat{t}-r^2,\hat{t}) \times B(\hat{x},r),\quad Q_r = Q(r)=Q(0,r), \quad Q=Q(1);\\
	&Q^+(\hat{z},r)=(\hat{t}-r^2,\hat{t}) \times B^+(\hat{x},r),\quad Q^+_r = Q^+(r)=Q^+(0,r), \quad Q^+=Q^+(1);\\
	&\partial_pQ(\hat{z},r) = [\hat{t}-r^2,\hat{t}) \times S(\hat{x},r)\cup \{t=\hat{t}-r^2\}\times B(\hat{x},r)
\end{align*}
where $\hat{z}=(\hat{t},\hat{x})$ and $\hat{x}_d$ is the $d$-th coordinate of $\hat{x}$.

For the remaining part of the paper, we restrict our discussion to the following domains (except for the local problem in Section \ref{sec_thm2}):
$$\Omega=\mathbb{R}^d \text{ or } \mathbb{R}^d_+,\quad \W_T= (0,T)\times \W,$$
$$\Omega(\hat{x},r)=B(\hat{x},r)\cap \Omega, \quad\omega(\hat{z},r)=Q(\hat{z},r)\cap \W_T.$$
In particular, we denote
$ \R^{d+1}_T = (0,T)\times \R^d$.

We denote mean values of summable functions as follows:
$$[u]_{\hat{x},r}(t)=\frac{1}{|\Omega(\hat{x},r)|}\int_{\Omega(\hat{x},r)}u(t,x)\, dx,$$
$$(u)_{\hat{z},r}=\frac{1}{|\omega(\hat{z},r)|}\int_{\omega(\hat{z},r)}u(z)\,dz,$$
where $\abs{A}$ as usual denotes the Lebesgue measure of the set A.

Now we introduce the following important quantities:
\begin{enumerate}
	\item[i)] When $\Omega = \mathbb{R}^d$,
	\begin{align*}
		&A(r,z_0)=\esssup_{t_0-r^2\le t\le t_0}\frac{1}{r^{d-2}}\int_{B(x_0,r)}\vert u\vert ^2  \,dx,\\
		&E(r,z_0)=\frac{1}{r^{d-2}}\int_{Q(z_0,r)}\vert \nabla u\vert ^2 \,dz,\\
		&C(r,z_0)=\frac{1}{r^{d+2/d-2}}\int_{Q(z_0,r)}\vert u \vert^{2(2d-1)/d} \,dz,\\
		&D(r,z_0)=\frac{1}{r^{d+2/d-2}}\int_{Q(z_0,r)}\vert p-[p]_{x_0,r}(t) \vert^{(2d-1)/d} \,dz.
	\end{align*}
	\item[ii)] When $\Omega = \mathbb{R}^d_+$ or $Q^+$,
	\begin{align*}
		&A^+(r,z_0)=\esssup_{t_0-r^2\le t\le t_0}\frac{1}{r^{d-2}}\int_{\Omega(x_0,r)}\vert u\vert ^2  \,dx,\\
		&E^+(r,z_0)=\frac{1}{r^{d-2}}\int_{\omega(z_0,r)}\vert \nabla u\vert ^2 \,dz,\\
		&C^+(r,z_0)=\frac{1}{r^{d+2/d-2}}\int_{\omega(z_0,r)}\vert u \vert^{2(2d-1)/d} \,dz,\\
		&D^+(r,z_0)=\frac{1}{r^{d+2/d-2}}\int_{\omega(z_0,r)}\vert p-[p]_{x_0,r}(t) \vert^{(2d-1)/d} \,dz.
	\end{align*}
\end{enumerate}
We notice that these quantities are all invariant under the natural scaling \eqref{natural_scaling}.

 In the later part of the paper,  we use notation  $A^{(+)}$ to represent either $A$ or $A^+$ depending on $\Omega=\mathbb{R}^d$ or $\Omega=\mathbb{R}^d_+$  when there is no confusion and similarly for $E^{(+)}, C^{(+)}, D^{(+)}, Q^{(+)}$, etc. We omit $z_0$, the argument for center, from the above expressions and write $A(r)$, $B(r)$, $C(r)$, and $D(r)$ when there is no ambiguity.

\subsection{Strong solutions and spatial analyticity.}
We recall the following local solvability of (\ref{NS1})-(\ref{NS2}) (see, for instance, \cite{Kato1,Giga1,Taylor1,Koch1,local_sol1}), and spatial analyticity of strong solutions (see, for instance, \cite{Giga2,Dong3,analyticity,analyticity1}).
\begin{prop}
	\label{prop_StrongSolva}
	For any divergence-free initial data $a\in L_p(\W)$ with $p\geq d$, where $\W = \mathbb{R}^d$ or $\mathbb{R}_+^d$, the Cauchy problem (\ref{NS1})-(\ref{NS2}) has a unique strong solution $u\in C([0,\delta);L_p(\W))$ for some $\delta>0$. Moreover, $u$ is infinitely differentiable and spatial analytic for $t\in (0,\delta)$.
\end{prop}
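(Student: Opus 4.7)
The statement is the classical local well-posedness plus smoothing/analyticity result for Navier--Stokes in the critical and subcritical $L^p$ setting. The plan is to set the equation up as a mild integral equation via the Stokes semigroup, apply a Kato-type contraction argument to produce the strong solution, then bootstrap regularity and finally invoke a Gevrey/complex-time argument for analyticity.

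First I would project away the pressure using the Helmholtz projection $\bP$ (the Leray projector on $\R^d$; the Solonnikov/Ukai construction on $\R^d_+$) and write the equation as the integral identity
\begin{equation*}
u(t)=e^{-tA}a-\int_0^t e^{-(t-s)A}\,\bP\,\nabla\cdot(u\otimes u)(s)\,ds,
\end{equation*}
where $A=-\bP\Delta$ is the Stokes operator with zero Dirichlet data on the boundary (and $A=-\Delta$ on $\R^d$). The key analytic inputs are the $L^p$-$L^q$ decay estimates for the Stokes semigroup, namely $\|e^{-tA}f\|_{L^q}\lesssim t^{-\frac{d}{2}(1/p-1/q)}\|f\|_{L^p}$ and $\|\nabla e^{-tA}f\|_{L^q}\lesssim t^{-\frac12-\frac{d}{2}(1/p-1/q)}\|f\|_{L^p}$ for $1<p\le q<\infty$, both of which are available on the half space (see Borchers--Miyakawa or Giga--Sohr). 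The plan is to set up a contraction on the Banach space
\begin{equation*}
X_\delta=\bigl\{u\in C([0,\delta];L^p(\Omega))\,:\,\sup_{0<t<\delta}t^{\frac12-\frac{d}{2p}}\|u(t)\|_{L^\infty}<\infty\bigr\}
\end{equation*}
(Kato's space), and show that the nonlinear map $\Phi(u)(t):=e^{-tA}a-\int_0^t e^{-(t-s)A}\bP\nabla\cdot(u\otimes u)\,ds$ is a contraction on a small ball in $X_\delta$. The crucial estimate is $\|\nabla e^{-(t-s)A}\bP(u\otimes u)\|_{L^p}\lesssim (t-s)^{-\frac12-\frac{d}{2p}}\|u\|_{L^\infty}\|u\|_{L^p}$, whose time-singularity is integrable since $p\ge d$; this yields a solution on some interval $[0,\delta)$ whose length depends on $\|a\|_{L^p}$ (and only on $\|a\|_{L^p}$ together with a smallness condition when $p=d$). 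Uniqueness in the same class follows from the contraction estimate.

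Having obtained $u\in C([0,\delta);L^p)$ with the Kato gain $t^{\frac12-\frac{d}{2p}}u\in L^\infty_t L^\infty_x$ near $0$, I would bootstrap: interpolating the Stokes semigroup smoothing with the integral equation propagates any number of spatial derivatives into $L^q$ for any finite $q$ on $(\e,\delta)$ for every $\e>0$, and then differentiating in time using the equation itself (solving for $\partial_t u$ in terms of $\Delta u$, $\nabla p$, and $u\cdot\nabla u$) gives $u\in C^\infty((0,\delta)\times\overline{\Omega})$; the boundary Dirichlet condition is preserved throughout because each Stokes semigroup iterate respects it.

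For spatial analyticity on $(0,\delta)$, the plan is to estimate Gevrey-class norms of the form $\|e^{\tau(t)A^{1/2}}u(t)\|_{L^p}$ with $\tau(t)\sim t^{1/2}$ (the Foias--Temam approach), or equivalently to rerun the fixed point above with the shifted semigroup $e^{-(t-s)A+\tau\sqrt{A}}$ whose kernel still enjoys Gaussian-type bounds in a complex strip. Controlling the nonlinear iteration in this Gevrey norm (the bilinear estimate is preserved because $e^{\tau A^{1/2}}$ is an algebra-preserving multiplier when combined with the semigroup smoothing) shows $\sup_{t<\delta}\|e^{ct^{1/2}A^{1/2}}u(t)\|_{L^p}<\infty$ for a small $c>0$, from which pointwise Cauchy estimates $|D^\alpha u(t,x)|\le C|\alpha|!\,R^{-|\alpha|}$ with $R\sim t^{1/2}$ follow and give real-analyticity in $x$ for every $t\in(0,\delta)$; on the half space the same argument applies after extending via the Stokes kernel's explicit representation (Solonnikov). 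The main obstacle is the analyticity step on $\R^d_+$, since the half-space Stokes kernel is not a convolution and one cannot simply freeze Fourier variables as on $\R^d$; I would handle this by working with the explicit Green's tensor of Solonnikov to verify that the Gevrey-norm bilinear estimate still closes, as was carried out in the references cited in the paper.
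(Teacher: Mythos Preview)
The paper does not prove this proposition at all: it is stated as a recollection of known results, with citations to Kato, Giga--Miyakawa, Taylor, Koch--Tataru, and Weissler for local solvability, and to Giga--Sawada, Dong--Li, Kahane, and Masuda for spatial analyticity. Your sketch is a faithful outline of exactly the Kato fixed-point argument and Foias--Temam/Gevrey analyticity scheme that those references carry out, so in that sense it matches the paper's intent; but strictly speaking there is nothing to compare, since the paper's ``proof'' is the list of references itself.
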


In the following two sections, we will show the H\"{o}lder continuity of $u$ given the scale invariant quantities defined previously are sufficiently small. The main difference between the interior estimate and the boundary estimate results from the different estimates of quantities $D$ and $D^+$.
\section{H\"{o}lder Continuity Interior  Estimate} \label{sec_holder_1}
In this section, we consider $\W = \bR^d$. We take a three-step approach to prove the following $\epsilon$-regularity criterion in the whole space.
\begin{theorem}
	\label{thm_holder_whole}
	Let $(u,p)$  be a suitable weak solution of (\ref{NS1})-(\ref{NS2}) in $(0,T)\times\mathbb{R}^{d}$ satisfying (\ref{eqn_Ldcondition}). There exists a universal constant $\epsilon_0$ satisfying the following property. Assume that for a point $z_0\in\mathbb{R}_T^{d+1}$ we have
	$$A(\rho_0,z_0)+E(\rho_0,z_0)+D(\rho_0,z_0)\leq \epsilon_0$$
	for some small $\rho_0>0$.
	Then $u$ is H\"{o}lder  continuous near $z_0$.
\end{theorem}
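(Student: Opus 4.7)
The plan is to adapt the Caffarelli--Kohn--Nirenberg--Lin $\epsilon$-regularity machinery to the critical $L^t_\infty L^x_d$ setting in dimension $d \geq 4$. The hypothesis $\|u\|_{L^t_\infty L^x_d} \leq K$ enters precisely as the extra integrability needed to make the interpolation bound for $C(r)$ superlinear in the other scale-invariant quantities, which is what allows the iteration to close in high dimensions.

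\textbf{Step 1 (three nonlinear inequalities).} On nested parabolic cylinders $Q(z_0, \theta r) \subset Q(z_0, r)$ with $\theta \in (0, 1/2)$ to be fixed, I would establish three estimates. First, an interpolation of the form $C(r) \leq \Psi\bigl(A(r), E(r); K\bigr)$ obtained by Gagliardo--Nirenberg in space (combining the $L^2$ control from $A$, the Sobolev $L^{2d/(d-2)}$ control from $E$, and the global $L^d$ bound $K$) and H\"older in time. Second, a pressure decay estimate: decompose $p = p_1 + p_2$ on $B(x_0, r)$, where $p_2$ is the Newtonian potential of $\partial_i\partial_j(u_iu_j\mathbf{1}_{B(x_0,r)})$, so that the Calder\'on--Zygmund inequality yields $\|p_2\|_{L^{(2d-1)/d}(B(x_0,r))} \lesssim \|u\|_{L^{2(2d-1)/d}(B(x_0,r))}^2$, while $p_1$ is harmonic in $B(x_0, r)$ and its mean value estimates provide an improved decay factor $\theta^{\alpha}$ with $\alpha = 4 - 3/d > 0$; combining gives an inequality of the schematic form $D(\theta r) \leq C\theta^{\alpha} D(r) + C\theta^{-(d+2/d-2)}\, C(r)$. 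Third, the local energy inequality \eqref{eqn_sw_energy} with a cutoff equal to $1$ on $Q(z_0, r/2)$ and supported in $Q(z_0, r)$ bounds $A(r/2) + E(r/2)$ by cubic and pressure--velocity terms controlled by powers of $C(r)$ and $D(r)$. These three inequalities are the parabolic analogues of those appearing in \cite{Refer21, Dong2}, adapted to the exponents dictated by the natural scaling \eqref{natural_scaling} in dimension $d$.

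\textbf{Step 2 (iteration and Campanato).} Setting $\Phi(r) := A(r) + E(r) + D(r)$ and combining Step 1, I would derive an inequality of the form
\begin{equation*}
\Phi(\theta r) \leq C_0 \theta^{\alpha}\, \Phi(r) + C(\theta, K)\, \Phi(r)^{1+\eta}, \qquad r \leq \rho_0,
\end{equation*}
for some $\eta > 0$, valid as long as $\Phi(r) \leq \epsilon_0$. Fixing $\theta$ so that $C_0\theta^\alpha \leq 1/2$ and then $\epsilon_0$ small enough (depending on $\theta$ and $K$) that the superlinear term is absorbed, a standard iteration lemma yields $\Phi(\theta^k \rho_0) \leq M\theta^{k\mu}$ for some $\mu > 0$, and a short perturbation in the center gives the same decay uniformly for parabolic cylinders centered at points $z$ in a small neighbourhood of $z_0$. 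Viewing the equation as $\partial_t u - \Delta u = -\operatorname{div}(u \otimes u) - \nabla p$, interior parabolic $L^{(2d-1)/d}$ regularity for the heat operator converts this polynomial decay of $\Phi$ into a Morrey--Campanato bound on the $L^{2-1/d}$ mean oscillations of $u$ on shrinking cylinders, and Campanato's characterization of parabolic H\"older spaces then delivers the H\"older continuity of $u$ near $z_0$.

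\textbf{Expected main obstacle.} The delicate point is Step 2: the interpolation exponents in the $C$-bound must be chosen so that the nonlinear term in $\Phi$ is genuinely superlinear, while the linear harmonic coefficient $C_0 \theta^{\alpha}$ from the pressure can still be made strictly less than $1/2$ after fixing $\theta$. The exponents $2(2d-1)/d$ and $(2d-1)/d$ appearing in the definitions of $C$ and $D$ are not arbitrary --- they are the ones that make Calder\'on--Zygmund applied to $u \otimes u$ output an object controlled by $C(r)$ and simultaneously produce the superlinearity; the a priori bound $\|u\|_{L^t_\infty L^x_d} \leq K$ is precisely what allows these exponents to be hit by interior interpolation. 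Keeping the dependence of all constants on $K$ polynomial, so that the resulting threshold $\epsilon_0$ genuinely depends only on $d$ and $K$, is a careful bookkeeping task that has to be carried throughout Steps 1 and 2.
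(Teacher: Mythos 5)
Your outline correctly identifies the three families of estimates --- interpolation for $C$, the Caffarelli--Kohn--Nirenberg--Lin pressure decomposition for $D$, and a local energy inequality for $A + E$ --- and your Step~3 (the decomposition $u = v + w$ against the caloric extension, parabolic $L_p$ theory, and Campanato's characterization) matches the paper's Step~3.

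However, Step~1 has a genuine gap that breaks Step~2 as stated. The energy inequality you describe, taken with a plain cutoff equal to $1$ on $Q(z_0, r/2)$ and supported in $Q(z_0, r)$, is the paper's Lemma~\ref{lemAECD}, and it yields only
\begin{equation*}
A(r/2) + E(r/2) \leq N\Bigl(C(r)^{\frac{d}{2d-1}} + C(r)^{\frac{3d}{2(2d-1)}} + C(r)^{\frac{d}{2(2d-1)}}D(r)^{\frac{d}{2d-1}}\Bigr).
\end{equation*}
Inserting your own interpolation bound $C(r) \leq N(A(r)+E(r))^{1 + \frac{2}{d(d-2)}}$, the smallest exponent on the right is
$\sigma = \frac{d(d-2)+2}{(2d-1)(d-2)}$,
which is \emph{sub}linear: $\sigma = 5/7$ at $d=4$ and tends to $1/2$ as $d \to \infty$. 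A sublinear bound cannot be reorganized into the schematic inequality $\Phi(\theta r) \leq C_0\theta^{\alpha}\Phi(r) + C\Phi(r)^{1+\eta}$ that you invoke in Step~2: the iteration map $\epsilon \mapsto N\epsilon^{\sigma}$ has a positive fixed point, so repeating it keeps $A+E$ bounded but does not force decay.

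The ingredient you are missing is the Lin-style test function $\psi = \Gamma\phi$ in the local energy inequality \eqref{eqn_sw_energy}, where $\Gamma$ is the backward heat kernel centered on the small cylinder $Q(z_0,\gamma)$; this is the paper's Lemma~\ref{lem8}. That choice replaces the low-order contribution $\int|u|^2$ by $\gamma^2 A(\rho)$ --- a \emph{linear} term with the small prefactor $\gamma^2$ --- while simultaneously giving the cubic and pressure--velocity terms the exponents $\tfrac{d-1}{d-2} > 1$ and $\tfrac{1}{2} + \tfrac{d}{2d-1} > 1$, both superlinear in $\Phi = A + E + D$. Once you substitute Lemma~\ref{lem8} for the plain cutoff estimate in Step~1, your Step~2 becomes correct, and in fact the fixed-ratio De~Giorgi-type iteration you propose (fix $\theta$ so $C_0\theta^2 \leq \tfrac12\theta^{\mu}$, then shrink $\epsilon_0$) is somewhat leaner than the varying step-size iteration $\rho_{k+1} = \rho_k^{1+\beta}$ used in the paper's Lemma~\ref{lem_a0_whole} and yields the same power decay. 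So the plan is sound, but the backward-heat-kernel variant of the energy inequality is not an optional refinement: without it the decay for $A+E$ never gets started.
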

\subsection{Step 1} We present several  inequalities of the scale invariant quantities.
We will make use of the following interpolation inequality from \cite[Lemma 2.1]{Dong1} substantially.
\begin{lemma}\label{lem2}
For any function $u\in W_2^1(B_r)$, $r>0$, and $q\in [2,2d/(d-2)]$, we have
\begin{align}\label{eqn1}
\begin{split}
\int_{B_r}|u|^q \,dx \leq &N(q,d)\Big[\left(\int_{B_r}\abs{\nabla u}^2\,dx\right)^{ \frac{d}{2}\left(\frac{q}{2}-1\right)}\left(\int_{B_r}\abs{u}^2\,dx\right)^{\frac{q}{2}-\frac{d}{2}\left(\frac{q}{2}-1\right)}\\
&+r^{-d\left(\frac{q}{2}-1\right)}\left(\int_{B_r}\abs{u}^2\,dx\right)^{\frac{q}{2}}\Big].
\end{split}
\end{align}
\end{lemma}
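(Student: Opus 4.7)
The plan is to establish the inequality first on the unit ball $B_1$ by combining the classical Sobolev embedding with H\"older interpolation, and then recover the case of general $r>0$ by rescaling. The critical Sobolev exponent $2^{\ast}:=2d/(d-2)$ equals the upper endpoint of the admissible range for $q$, so the embedding $W_2^1(B_1)\hookrightarrow L_{2^\ast}(B_1)$ provides the baseline inequality
\begin{equation*}
\|v\|_{L_{2^\ast}(B_1)}\leq N(d)\bigl(\|\nabla v\|_{L_2(B_1)}+\|v\|_{L_2(B_1)}\bigr).
\end{equation*}

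First, for $q\in[2,2^{\ast}]$ I would select $\alpha\in[0,1]$ by $\tfrac{1}{q}=\tfrac{\alpha}{2}+\tfrac{1-\alpha}{2^\ast}$, which forces $(1-\alpha)\tfrac{q}{2}=\tfrac{d}{2}(\tfrac{q}{2}-1)$ and $\alpha\tfrac{q}{2}=\tfrac{q}{2}-\tfrac{d}{2}(\tfrac{q}{2}-1)$---these are exactly the exponents appearing on the right-hand side of \eqref{eqn1}. H\"older's inequality yields $\|v\|_{L_q(B_1)}\leq\|v\|_{L_2(B_1)}^{\alpha}\|v\|_{L_{2^\ast}(B_1)}^{1-\alpha}$, and inserting the Sobolev bound together with the elementary estimate $(a+b)^{1-\alpha}\leq a^{1-\alpha}+b^{1-\alpha}$ gives
\begin{equation*}
\|v\|_{L_q(B_1)}\leq N(d,q)\bigl(\|v\|_{L_2(B_1)}^{\alpha}\|\nabla v\|_{L_2(B_1)}^{1-\alpha}+\|v\|_{L_2(B_1)}\bigr).
\end{equation*}

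Second, I would apply this to the dilation $v(y):=u(ry)$ on $B_1$. A direct change of variables gives $\|v\|_{L_p(B_1)}^p=r^{-d}\|u\|_{L_p(B_r)}^p$ and $\|\nabla v\|_{L_2(B_1)}^2=r^{2-d}\|\nabla u\|_{L_2(B_r)}^2$. Raising the preceding inequality to the $q$-th power, multiplying through by $r^d$, and collecting the $r$-factors, one finds that the exponents on the first term cancel exactly by the defining relation for $\alpha$ (this is the scale invariance of the Gagliardo--Nirenberg estimate), while the second term produces precisely the prefactor $r^{-d(q/2-1)}$ claimed in \eqref{eqn1}. The argument presents no substantive obstacle; the extremes $q=2$ (trivial) and $q=2^\ast$ (pure Sobolev) serve as consistency checks, and the remainder is pure exponent bookkeeping.
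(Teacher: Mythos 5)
Your argument is correct and is essentially the proof in the paper: both reduce to the unit ball via scaling, interpolate $L_q$ between $L_2$ and $L_{2^\ast}$ by H\"older, and then apply the Sobolev embedding. The paper simply writes ``without loss of generality assume $r=1$'' in place of your explicit dilation computation, but the underlying argument and the exponent bookkeeping are identical.
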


\begin{proof} Without loss of generality, we assume $r=1$.
	For $q\in [2,2d/(d-2)]$, we use H\"{o}lder's inequality inside the unit ball  $B$,
	\begin{equation*}
		\norm{u}_{L_q(B)}\leq \norm{u}_{L_2(B)}^{1-\frac{d}{q}(\frac{q}{2}-1)}\norm{u}_{L_{\frac{2d}{d-2}}(B)}^{\frac{d}{q}(\frac{q}{2}-1)},
	\end{equation*}
which together with Sobolev embedding theorem  gives
\begin{equation*}
	\int_B\abs{u}^q\, dx\leq
	N\left( \int_B\abs{u}^2\, dx\right)^{\frac{q}{2}-d(\frac{q}{4}-\frac{1}{2})}\left[\left(\int_B\abs{\nabla u}^2\, dx\right)^{d(\frac{q}{4}-\frac{1}{2})}+\left(\int_B\abs{u}^2\, dx\right)^{d(\frac{q}{4}-\frac{1}{2})}\right].
\end{equation*}
The lemma is proved.
\end{proof}


The next lemma is an application of the interpolation inequality proved in Lemma \ref{lem2}.
\begin{lemma}\label{lem7}
For $\alpha \in[0,1]$ and $p\in [2+\frac{4\alpha}{d},d+(4-d)\alpha]$, suppose $r>0$, $Q(z_0,r)\subset\mathbb{R}_T^{d+1}$, and u satisfies the condition (\ref{eqn_Ldcondition}). Then we have
\begin{align*}
\frac{1}{r^{d+2-p}}\int_{Q(z_0,r)}|u|^p\, dz\leq N\left(A(r,z_0)+E(r,z_0)\right)^{\frac{d-p+2\alpha}{d-2}}.
\end{align*}
In particular, taking $\alpha=1$ and  $p = 4-2/d$  we have
$$C(r,z_0) \leq N\left(A(r,z_0)+E(r,z_0)\right)^{\frac{d-2+2/d}{d-2}}.$$
\end{lemma}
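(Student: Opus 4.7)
My plan is to interpolate the local spatial $L^p$ norm of $u$ between $L^d$, whose bound is supplied by \eqref{eqn_Ldcondition}, and an intermediate exponent $p'$ lying in the Sobolev range $[2, 2d/(d-2)]$ where Lemma \ref{lem2} applies. After integrating in time with a H\"{o}lder exponent calibrated so that the gradient term $\int |\nabla u|^2$ appears with power exactly $1$, the estimate will reduce to the scale invariant quantities $A$ and $E$ as desired.

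Concretely, I would set $\lambda = (p - 2 - 4\alpha/d)/(d-2)$; the lower hypothesis $p \geq 2 + 4\alpha/d$ guarantees $\lambda \geq 0$. Applying H\"{o}lder in $B(x_0, r)$ with conjugate exponents $1/(1-\lambda)$ and $1/\lambda$ to the splitting $|u|^p = |u|^{p-d\lambda}\cdot |u|^{d\lambda}$ yields
$$
\int_{B(x_0,r)} |u|^p\,dx \leq K^{d\lambda}\Big(\int_{B(x_0,r)} |u|^{p'}\,dx\Big)^{1-\lambda},\qquad p' = \frac{p - d\lambda}{1-\lambda}.
$$
Direct algebra gives $p' = 2(d-p+2\alpha)/(d-p+4\alpha/d)$ and, crucially, shows that the upper hypothesis $p \leq d + (4-d)\alpha$ is equivalent to $p' \leq 2d/(d-2)$, so Lemma \ref{lem2} is applicable. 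Raising that inequality to the power $1-\lambda$ and using the identity $d(p'-2)(1-\lambda)/4 = \alpha$ produces, for each fixed $t$, a two-term bound whose main summand is $N (\int |\nabla u|^2)^{\alpha}(\int |u|^2)^{\beta}$ with $\beta = (d-p+2\alpha)/(d-2) - \alpha$, and whose remainder is $N r^{-2\alpha}(\int |u|^2)^{\alpha+\beta}$; the constant $K^{d\lambda}$ is absorbed into $N$ since $K$ and $d$ are fixed.

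I then integrate in $t$ over $(t_0 - r^2, t_0)$. For the main term I apply H\"{o}lder in time with exponents $1/\alpha$ and $1/(1-\alpha)$ to pull out $\int_{Q(z_0,r)} |\nabla u|^2$, obtaining a bound of the form $E^{\alpha} A^{\beta} r^{(d-2)(\alpha+\beta) + 2(1-\alpha)}$; the remainder is estimated directly using $\sup_t \int |u|^2 \leq A r^{d-2}$. Verifying the identities $(d-2)(\alpha+\beta) + 2(1-\alpha) = d + 2 - p$ and $\alpha + \beta = (d-p+2\alpha)/(d-2)$, dividing by $r^{d+2-p}$, and bounding $A^{\beta}E^{\alpha} + A^{\alpha+\beta} \leq 2(A+E)^{(d-p+2\alpha)/(d-2)}$ then completes the proof. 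The particular case $\alpha = 1$, $p = 4 - 2/d$ follows by direct substitution.

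The main obstacle is primarily bookkeeping: verifying the equivalence between the stated range $p \in [2 + 4\alpha/d,\, d + (4-d)\alpha]$ and the admissibility conditions $\lambda \in [0,1]$, $p' \in [2, 2d/(d-2)]$, together with the identity $d(p'-2)(1-\lambda)/4 = \alpha$ that aligns the exponent on $\int |\nabla u|^2$ with the exponent chosen for the time-H\"{o}lder step. The endpoint cases $\alpha = 0$ (where no gradient term appears and $p' = 2$ makes Lemma \ref{lem2} vacuous) and $\alpha = 1$ (where the time-H\"{o}lder degenerates to pulling out $\sup_t \int |u|^2$) fit the same scheme and can be checked as degenerate versions of the argument.
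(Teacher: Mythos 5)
Your proposal is correct and takes essentially the same route as the paper: your parameter $\lambda=(p-2-4\alpha/d)/(d-2)$ and the intermediate exponent $p'=\frac{2d(d-p+2\alpha)}{d^2-dp+4\alpha}$ coincide exactly with the H\"{o}lder split and the choice of $q$ in the paper's proof, after which Lemma \ref{lem2} is applied and the result follows by integrating in $t$. The only cosmetic difference is that you carry the radius $r$ explicitly and spell out the time-H\"{o}lder step, whereas the paper rescales to $r=1$ at the outset and compresses the final time integration into a single sentence.
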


\begin{proof}
Without loss of generality,  we assume $r=1$. Because $p\in [2+\frac{4\alpha}{d},d+(4-d)\alpha]$, we know $\frac{pd-2d-4\alpha}{d(d-2)}\geq 0$ and $q:=\frac{2d(d-p+2\alpha)}{d^2-dp+4\alpha}\in \left[2,\frac{2d}{d-2}\right]$. By using H\"{o}lder's inequality and  (\ref{eqn1}) with this $q$, we have
\begin{align*}
\quad \int_{B}|u|^p\, dx
&\leq \left( \int_{B} |u|^{\frac{2d(d-p+2\alpha)}{d^2-dp+4\alpha}}\, dx\right)^{\frac{d^2-dp+4\alpha}{d(d-2)}}\left(\int_{B}|u|^d\,dx\right)^{\frac{pd-2d-4\alpha}{d(d-2)}}\\
&\leq N\left(\left(\int_{B}|\nabla u|^2\, dx\right)^{\alpha}\left(\int_{B}|u|^2\, dx\right)^{\frac{(1-\alpha)d+4\alpha-p}{d-2}}
+\left(\int_{B}|u|^2\,dx\right)^{\frac{d-p+2\alpha}{d-2}}\right),
\end{align*}
where we used (\ref{eqn_Ldcondition}) in the last inequality.
Integrating in time yields the desired result.
\end{proof}
\begin{lemma}
	\label{lemAECD}
	Let $(u,p)$ be a pair of suitable weak solution of (\ref{NS1}). For $\rho>0$ and  $Q(z_0,\rho)\subset \mathbb{R}_T^{d+1}$, we have
	$$A(\rho/2)+E(\rho/2)\leq N\left(C(\rho)^{\frac{d}{2d-1}}+C(\rho)^{\frac{3d}{2(2d-1)}}+ C(\rho)^{\frac{d}{2(2d-1)}}D(\rho)^{\frac{d}{2d-1}}\right),$$
	where $N$ is independent of $z_0$ and $\rho$.
\end{lemma}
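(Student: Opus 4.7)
My plan is to apply the local energy inequality (item (iii) in the definition of a suitable weak solution) with a carefully chosen cutoff and then use H\"older's inequality to convert every term into the scale-invariant quantities $C(\rho)$ and $D(\rho)$.

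Choose $\psi\in C_0^\infty(\overline{Q(z_0,\rho)})$ with $0\le\psi\le 1$, $\psi\equiv 1$ on $Q(z_0,\rho/2)$, and satisfying $|\partial_t\psi|+|\Delta\psi|\le N\rho^{-2}$ and $|\nabla\psi|\le N\rho^{-1}$. Plugging this into \eqref{eqn_sw_energy} immediately gives
\begin{equation*}
\rho^{d-2}\bigl(A(\rho/2)+2E(\rho/2)\bigr)\le N\rho^{-2}\int_{Q(z_0,\rho)}|u|^2\,dz+N\rho^{-1}\int_{Q(z_0,\rho)}|u|^3\,dz+2\int_{Q(z_0,\rho)}p\,u\cdot\nabla\psi\,dz.
\end{equation*}
Before estimating the pressure term, I exploit that $\psi(t,\cdot)$ is compactly supported in $B(x_0,\rho)$ and $\operatorname{div} u=0$ to obtain $\int_{B(x_0,\rho)}u(t,x)\cdot\nabla\psi(t,x)\,dx=0$ for each $t$. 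Therefore $p$ may be replaced by $p-[p]_{x_0,\rho}(t)$ in the pressure integral without changing its value.

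Each of the three resulting terms is now controlled by a single application of H\"older's inequality in space and time. For the first, split $|u|^2=|u|^{2}$ with H\"older exponents $(2d-1)/(d-1)$ and $(2d-1)/d$ to bring in $\|u\|_{L^{2(2d-1)/d}(Q(z_0,\rho))}$; tracking powers of $\rho$ yields $N\rho^{d-2}C(\rho)^{d/(2d-1)}$. For the second, H\"older with exponents $2(2d-1)/(d-2)$ and $2(2d-1)/(3d)$ produces $N\rho^{d-2}C(\rho)^{3d/(2(2d-1))}$. For the pressure term, apply a three-exponent H\"older inequality with $1/a=d/(2d-1)$ (matched to $|p-[p]_{x_0,\rho}|$), $1/b=d/(2(2d-1))$ (matched to $|u|$), and $1/c=(d-2)/(2(2d-1))$ (applied to the measure of $Q(z_0,\rho)$); this gives $N\rho^{d-2}D(\rho)^{d/(2d-1)}C(\rho)^{d/(2(2d-1))}$.

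Adding the three contributions and dividing by $\rho^{d-2}$ produces exactly the claimed bound on $A(\rho/2)+E(\rho/2)$. The routine but slightly delicate part is the exponent bookkeeping: the identity $(d+2)(d-2)+3(d^2-2d+2)=2(2d-1)(d-1)$ (and its analogue for the pressure term) is what makes the $\rho$-powers add up to $d-1$ and match the normalization in $A$ and $E$. This algebraic coincidence is precisely why $C$ and $D$ are defined with the exponent $2(2d-1)/d$ (respectively $(2d-1)/d$); once it is checked, the argument is otherwise a standard adaptation of the Caffarelli--Kohn--Nirenberg energy bound to the dimension $d$.
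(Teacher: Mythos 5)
Your proof is correct and follows essentially the same route as the paper's: scale/normalize, test the local energy inequality against a cutoff supported in $Q(z_0,\rho)$ that is identically one on $Q(z_0,\rho/2)$, use $\operatorname{div}u=0$ to replace $p$ by $p-[p]_{x_0,\rho}(t)$, and control the resulting three terms by H\"older's inequality against $C(\rho)$ and $D(\rho)$; your use of a three-exponent H\"older for the pressure term versus the paper's two successive two-exponent applications is a cosmetic difference, and your exponent bookkeeping checks out.
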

\begin{proof}
	By a scaling argument, we may assume $\rho=1$. In the energy inequality (\ref{eqn_sw_energy}), we put $t=t_0$ and choose a suitable smooth cut-off function $\psi$ such that
	$$\psi \equiv 0\text{ in } \mathbb{R}_{t_0}^{d+1}\setminus Q(z_0,1), \quad 0\leq \psi\leq 1\text{ in }\mathbb{R}_{t_0}^{d+1},$$
	$$\psi \equiv 1 \text{ in } Q(z_0,1/2), \quad \abs{\nabla \psi}+\abs{\partial_t \psi}+\abs{\nabla^2\psi}<N \text{ in } \mathbb{R}_{t_0}^{d+1}.$$
	By using (\ref{eqn_sw_energy}), we get
	$$A(1/2)+2E(1/2)\leq N\int_{Q(z_0,1)}\abs{u}^2\, dz+N\int_{Q(z_0,1)}(\abs{u}^2+2\abs{p})\abs{u}\, dz.$$
	Due to H\"{o}lder's inequality, we can obtain
$$
\int_{Q(z_0,1)}\abs{u}^2\, dz\leq N(C(1))^{\frac{d}{2d-1}},\quad \int_{Q(z_0,1)}\abs{u}^3\, dz\leq N(C(1))^{\frac{3d}{2(2d-1)}},
$$
	and
	\begin{align*}
		&\int_{Q(z_0,1)}\abs{p}\abs{u}\, dz\\
		&\leq \left( \int_{Q(z_0,1)}\abs{u}^{\frac{2d-1}{d-1}}\, dz\right)^\frac{d-1}{2d-1}\left(\int_{Q(z_0,1)}\abs{p}^{\frac{2d-1}{d}}\,dz \right)^{\frac{d}{2d-1}}\\
		&\leq C(1)^{\frac{d}{2(2d-1)}}D(1)^{\frac{d}{2d-1}}.
	\end{align*}
	The conclusion of Lemma \ref{lemAECD} follows immediately.
\end{proof}
\begin{lemma}
	\label{lem8}
	Let $(u,p)$ be a pair of suitable weak solution of (\ref{NS1}) when $d\geq 4$. For constants $\gamma \in (0,1/2]$, $\rho >0$,  and $Q(z_0,\rho)\subset \mathbb{R}_T^{d+1}$, we have
\begin{align}\label{eqn2}
 A(\gamma\rho)+E(\gamma\rho)
\leq N\left[\gamma^{2}A(\rho)+\gamma^{-d+1}\left((A(\rho)+E(\rho))^{\frac{d-1}{d-2}}+(A(\rho)+E(\rho))^{\frac{1}{2}}D(\rho)^{\frac{d}{2d-1}}\right)\right].
\end{align}
\end{lemma}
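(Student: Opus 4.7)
The plan is to apply the local energy inequality (\ref{eqn_sw_energy}) with a carefully chosen space-time cutoff, and to extract the desired $\gamma^2 A(\rho)$ decay by decomposing $u$ into its spatial mean and a fluctuation. By the scale-invariance of $A,E,D$ we may reduce to $\rho = 1$. Fix a smooth $\psi$ with $\psi \equiv 1$ on $Q(z_0,\gamma)$, $\operatorname{supp}\psi \subset Q(z_0,2\gamma)$, $|\partial_t\psi| + |\nabla^2\psi| \leq N\gamma^{-2}$, and $|\nabla\psi| \leq N\gamma^{-1}$. Plugging $\psi$ into (\ref{eqn_sw_energy}), and using $\operatorname{div} u = 0$ to subtract the spatial mean $[p]_1(t)$ from the pressure, after dividing by $\gamma^{d-2}$ I obtain
\begin{equation*}
A(\gamma) + 2E(\gamma) \leq \frac{N}{\gamma^d}\int_{Q(2\gamma)} |u|^2\,dz + \frac{N}{\gamma^{d-1}}\int_{Q(2\gamma)} |u|^3\,dz + \frac{N}{\gamma^{d-1}}\int_{Q(2\gamma)} |p - [p]_1(t)|\,|u|\,dz.
\end{equation*}

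For the $|u|^2$ integral I write $u = \bar u(t) + v$ with $\bar u(t) = [u]_1(t)$. The pointwise Cauchy-Schwarz bound $|\bar u(t)|^2 \leq NA(1)$ gives $\int_{Q(2\gamma)}|\bar u|^2 \leq N\gamma^{d+2}A(1)$, which after dividing by $\gamma^d$ contributes exactly the target $N\gamma^2 A(1)$; the fluctuation $v$, which has vanishing spatial mean on $B(1)$, is controlled by Sobolev-Poincar\'e on $B(1)$ as $\int_{B(2\gamma)}|v|^2 \leq N|B(2\gamma)|^{2/d}\|\nabla u\|_{L^2(B(1))}^2$, time-integration then yielding a remainder that is absorbed into the bounds obtained for the remaining two terms. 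For the $|u|^3$ integral, Lemma~\ref{lem7} at scale $2\gamma$ with $p=3$ and $\alpha=1$ gives $\int_{Q(2\gamma)}|u|^3 \leq N\gamma^{d-1}(A(2\gamma) + E(2\gamma))^{(d-1)/(d-2)}$; combined with the trivial scaling $A(2\gamma) + E(2\gamma) \leq N\gamma^{-(d-2)}(A(1)+E(1))$ (immediate from the definitions and the inclusions $B(2\gamma)\subset B(1)$, $Q(2\gamma)\subset Q(1)$), this produces $N\gamma^{-(d-1)}(A+E)^{(d-1)/(d-2)}$.

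For the pressure term I apply H\"older's inequality with conjugate exponents $(2d-1)/d$ and $(2d-1)/(d-1)$. Since $[p]_1(t)$ is the mean on $B(1)$ and $Q(2\gamma)\subset Q(1)$, the pressure factor is bounded by $D(1)^{d/(2d-1)}$ with no $\gamma$ cost. The remaining factor $\|u\|_{L^{(2d-1)/(d-1)}(Q(2\gamma))}$ is estimated via Lemma~\ref{lem7} with $p = (2d-1)/(d-1)$ and $\alpha = d/(4(d-1))$ --- this value of $\alpha$ is forced by demanding that the exponent on $A+E$ collapse to $1/2$ after raising to the $(d-1)/(2d-1)$ power, and one verifies that $p$ sits at the left endpoint of the admissible range $[2+4\alpha/d,\,d+(4-d)\alpha]$ for all $d\geq 4$. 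Applying the scaling $A(2\gamma) + E(2\gamma) \leq N\gamma^{-(d-2)}(A+E)$ once more delivers the required contribution $N\gamma^{-(d-1)}(A+E)^{1/2}D^{d/(2d-1)}$, and the three bounds together produce the claimed inequality. The main technical subtlety is the fluctuation bookkeeping in the $|u|^2$ term: one must balance the Sobolev gain of $\gamma^2$ in space against the short length $\gamma^2$ of the time interval so that the remainder is indeed dominated by the $\gamma^{-(d-1)}$ terms coming from the cubic and pressure integrals.
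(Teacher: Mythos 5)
Your plan gets the cubic and pressure terms right --- applying Lemma~\ref{lem7} at scale $2\gamma$ and then scaling to unit radius does reproduce the paper's $\gamma^{-(d-1)}$ factors --- but the $|u|^2$ term has an unclosable gap. With a plain cutoff $\psi$ supported in $Q(z_0,2\gamma)$ you unavoidably face $|\psi_t+\Delta\psi|\lesssim\gamma^{-2}$, and after dividing by $\gamma^{d-2}$ this produces $\gamma^{-d}\int_{Q(2\gamma)}|u|^2$. The mean--fluctuation split handles the mean correctly, but for the fluctuation $v$ you only get
\begin{equation*}
\frac{1}{\gamma^d}\int_{Q(2\gamma)}|v|^2\,dz \leq \frac{N\gamma^2}{\gamma^d}\int_{t_0-(2\gamma)^2}^{t_0}\int_{B(x_0,1)}|\nabla u|^2\,dx\,dt \leq N\gamma^{2-d}E(\rho),
\end{equation*}
because the dissipation $\int_{B(1)}|\nabla u|^2(t,\cdot)\,dx$ is only summable over the full time interval $(t_0-1,t_0)$ and may be concentrated entirely in the last $\gamma^2$-long slab: the short time interval buys no power of $\gamma$. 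This remainder $\gamma^{2-d}E(\rho)$ does not appear on the right of \eqref{eqn2} and cannot be absorbed. In the regime $A+E\ll 1$ where the lemma is invoked (Lemma~\ref{lem_a0_whole}), one has $(A+E)^{(d-1)/(d-2)}\ll A+E$, so $\gamma^{1-d}(A+E)^{(d-1)/(d-2)}$ is strictly smaller than $\gamma^{2-d}E(\rho)$; in the iteration step you would need $(2-d)\beta+\alpha>(1+\beta)\alpha$, i.e.\ $2-d>\alpha$, which is impossible for $d\geq 4$. So the decay lemma, and with it the whole criterion, would break.

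The paper sidesteps this with the classical backward heat kernel trick: $\psi=\Gamma\phi$, where $\Gamma$ is the backward heat kernel centered at $z_0$ with variance scale $\gamma$, and $\phi$ is a \emph{unit-scale} cutoff with $\phi\equiv 1$ on $Q(z_0,1/2)$. Since $\Gamma_t+\Delta\Gamma=0$, one has $\psi_t+\Delta\psi=\Gamma\phi_t+2\nabla\Gamma\cdot\nabla\phi+\Gamma\Delta\phi$, which is supported where $\phi$'s derivatives live, namely on $Q(1)\setminus Q(1/2)$, far from the singularity of $\Gamma$; hence $|\psi_t+\Delta\psi|\leq N$ with a constant independent of $\gamma$. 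Combined with $\Gamma\phi\geq c\gamma^{-d}$ on $Q(z_0,\gamma)$ and $|\nabla\psi|\leq N\gamma^{-d-1}$ on $Q(1)$, this gives the clean $\gamma^2 A(\rho)$ term directly --- the $\gamma^2$ gain comes from the shape of the weight, not from any cancellation in $u$, and no mean subtraction or Poincar\'{e} is needed. You would have to adopt this weighted test function (or an equivalent) before the rest of your argument can close.
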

\begin{proof}
Assume $\rho=1$.
Define the backward heat kernel as
$$\Gamma(t,x)=\frac{1}{(4\pi(\gamma^2+t_0-t))^{d/2}}e^{-\frac{\abs{x-x_0}^2}{2(\gamma^2+t_0-t)}},$$
In the energy inequality (\ref{eqn_sw_energy}), we choose $\psi=\Gamma\phi$, where $\phi\in C^{\infty}_0( (t_0-1,t_0+1)\times B(x_0,1))$ is a suitable smooth cut-off function satisfying
$$ 0\leq \phi \leq 1 \quad \text{in } \R \times\mathbb{R}^d , \quad \phi \equiv 1 \quad \text{ in  } Q(z_0,1/2),$$
$$|\nabla \phi |\leq  N,\quad |\nabla^2\phi| \leq N, \quad|\partial_t\phi |\leq N \text{ in  }  \R \times\mathbb{R}^d.$$
By using the equation
$$\Delta \Gamma +\Gamma_t=0,$$
we have	
\begin{align}\label{eqn8}
\begin{split}
\text{ess sup}_{0\leq s\leq t}&\int_{B(x_0,1)}\abs{u(t,x)}^2\Gamma(t,x)\phi(t,x)\,dx+2\int_{Q(z_0,1)}\abs{\nabla u}^2\Gamma\phi \,dz\\
&\leq \int_{Q(z_0,1)}\{\abs{u}^2(\Gamma\phi_t+\Gamma\Delta\phi+2\nabla\phi\nabla\Gamma)\\
&\quad +(\abs{u}^2+2\abs{p-[p]_{x_0,1}})u\cdot(\Gamma\nabla\phi+\phi\nabla\Gamma \} \,dz.
\end{split}
\end{align}
The test function has the following properties:
\begin{enumerate}
\item[(i)] For some constant $c>0$, on $Q(z_0,\gamma)$ it holds that
$$\Gamma\phi = \Gamma \geq c\gamma^{-d}.$$
\item[(ii)]For any $z\in Q(z_0,1)$, we have
$$\abs{\Gamma(z)\phi(z)}\leq N\gamma^{-d}, \quad \abs{\nabla\Gamma(z)\phi(z)}+\abs{\Gamma(z)\nabla\phi(z)}\leq N\gamma^{-d-1}.$$
\item[(iii)] For any $z\in Q(z_0,1)$, we have
$$\abs{\Gamma(z)\phi_t(z)}+\abs{\Gamma(z)\Delta\phi(z)}+\abs{\nabla\Gamma(z)\nabla\phi(z)}\leq N.$$
\end{enumerate}
Therefore (\ref{eqn8}) yields
 \begin{align*}
\begin{split}
A(\gamma)+E(\gamma)&=
\gamma^{-d+2}\text{ess sup}_{0\leq s\leq t}\int_{B(x_0,\gamma)}\abs{u(t,x)}^2\,dx+\gamma^{-d+2}\int_{Q(z_0,\gamma)}\abs{\nabla u}^2 \,dz\\
&\leq N\left[\gamma^2\int_{Q(z_0,1)}\abs{u}^2 +\gamma^{-d+1} \int_{Q(z_0,1)}(\abs{u}^2+2\abs{p-[p]_{x_0,1}})|u|\,dz\right].
\end{split}
\end{align*}
Recall that $d\ge 4$. Applying Lemma \ref{lem7} with $\alpha=1$ and  $p=3$ we have
\begin{align*}
\int_{Q(z_0,1)}|u|^3 \,dz \leq N\left(A(1,z_0)+E(1,z_0)\right)^{\frac{d-1 }{d-2}},
\end{align*}
and again applying  Lemma \ref{lem7} with $\alpha = \frac{d}{4(d-1)}$ and  $p=\frac{2d-1}{d-1}$, we have
\begin{align*}
& \int_{Q(z_0,1)} |p-[p]_{x_0,1}| |u|\, dz\\
&\leq \left(\int_{Q(z_0,1)} |u|^{\frac{2d-1}{d-1}}\, dz\right)^{\frac{d-1}{2d-1}} \left(\int_{Q(z_0,1)} |p-[p]_{x_0,1}|^{\frac{2d-1}{d}}\, dz \right)^{\frac{d}{2d-1}}\\
& \leq N\left[\left(A(1,z_0)+E(1,z_0)\right)^{\frac{1}{2}}D(1,z_0)^{\frac{d}{2d-1}}\right].
\end{align*}
The lemma is proved.
\end{proof}

\begin{lemma}\label{lem_CD} Let $(u,p)$ be a pair of suitable weak solution of (\ref{NS1}). For constants $\gamma \in (0,1/2]$, $\rho >0$,  and $Q(z_0,\rho)\subset \mathbb{R}_T^{d+1}$, we have
\begin{equation}\label{eqn20}
D(\gamma\rho)\leq N(d)[\gamma^{-d-2/d+2}C(\rho)+\gamma^{4-3/d}D(\rho)].
\end{equation}
\end{lemma}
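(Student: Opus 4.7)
The plan is to use the standard pressure splitting. Fix a time slice $t\in(t_0-\rho^2,t_0)$; taking the divergence of the momentum equation gives $-\Delta p(t,\cdot)=\partial_i\partial_j(u_iu_j)(t,\cdot)$ in $B(x_0,\rho)$. I would decompose $p=p_1+p_2$ on $B(x_0,\rho)$, where $p_1(t,\cdot)$ is defined on all of $\R^d$ as the Newtonian potential representation of $\partial_i\partial_j(u_iu_j\chi_{B(x_0,\rho)})$, so that $p_2:=p-p_1$ is harmonic on $B(x_0,\rho)$. This separates the pressure into a ``local'' part controlled by $u$ and a harmonic part controlled by its own $L^q$-norm on $B(x_0,\rho)$.

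For the local part, the Calder\'on--Zygmund estimate (slicewise) with $q=(2d-1)/d$ gives $\norm{p_1(t,\cdot)}_{L^q(\R^d)}^q\le N\int_{B(x_0,\rho)}\abs{u(t,\cdot)}^{2q}\,dx$, where $2q=2(2d-1)/d$ is precisely the exponent appearing in the definition of $C$. Integrating in $t$ over $(t_0-\rho^2,t_0)$ then yields
\[
\int_{Q(z_0,\gamma\rho)}\abs{p_1}^q\,dz\le\int_{Q(z_0,\rho)}\abs{p_1}^q\,dz\le N\rho^{d+2/d-2}C(\rho).
\]

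For the harmonic part, interior gradient estimates for harmonic functions give, for $\gamma\le 1/2$,
\[
\sup_{x\in B(x_0,\gamma\rho)}\abs{p_2(t,x)-p_2(t,x_0)}\le N\gamma\rho^{-d/q}\norm{p_2(t,\cdot)-[p_2]_{x_0,\rho}(t)}_{L^q(B(x_0,\rho))}.
\]
Raising to the $q$-th power, integrating in space (the ball $B(x_0,\gamma\rho)$ has volume of order $(\gamma\rho)^d$) and then in $t$, and using $p_2=p-p_1$ together with the bound on $p_1$ just obtained, I get
\[
\int_{Q(z_0,\gamma\rho)}\abs{p_2-p_2(\cdot,x_0)}^q\,dz\le N\gamma^{d+q}\rho^{d+2/d-2}(C(\rho)+D(\rho)).
\]

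To finish, I would use the elementary inequality $\int_{Q(z_0,\gamma\rho)}\abs{p-[p]_{x_0,\gamma\rho}(t)}^q\,dz\le 2^q\int_{Q(z_0,\gamma\rho)}\abs{p-c(t)}^q\,dz$, valid for any measurable $c(t)$, and take $c(t)=p_2(t,x_0)$. Splitting $p=p_1+p_2$ and applying the triangle inequality, the right-hand side is bounded by $N\int_{Q(z_0,\gamma\rho)}(\abs{p_1}^q+\abs{p_2-p_2(\cdot,x_0)}^q)\,dz$. Combining with the two displays above and dividing by $(\gamma\rho)^{d+2/d-2}$, the $p_1$-contribution produces the term $N\gamma^{-d-2/d+2}C(\rho)$, while the $p_2$-contribution produces $N\gamma^{d+q-(d+2/d-2)}(C(\rho)+D(\rho))=N\gamma^{4-3/d}(C(\rho)+D(\rho))$; the extraneous $C(\rho)$ piece from the harmonic step is absorbed into the other $C(\rho)$ term since $\gamma\le 1$. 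This gives exactly \eqref{eqn20}. The argument is essentially routine; the only subtle point is the exponent bookkeeping, where the favorable gain $\gamma^{4-3/d}$ on $D(\rho)$ arises from pairing the harmonic oscillation factor $\gamma^{d+q}$ with the scaling weight $(\gamma\rho)^{d+2/d-2}$ built into the definition of $D$.
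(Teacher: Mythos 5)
Your proof is correct and follows essentially the same strategy as the paper's: decompose $p$ into a Calder\'on--Zygmund part (Newtonian potential of the localized $\partial_i\partial_j(u_iu_j)$) and a harmonic remainder, bound the first by $C(\rho)$ and the second by interior harmonic estimates. The only cosmetic differences are that you use a sharp characteristic-function cutoff (so $p_2$ is harmonic on all of $B(x_0,\rho)$ rather than the paper's smooth cutoff giving harmonicity on $B(x_0,2\rho/3)$), you center at $p_2(t,x_0)$ via the elementary comparison-with-the-mean inequality where the paper centers at $[h_{x_0,\rho}]_{x_0,r}$, and you apply the gradient sup estimate directly rather than Poincar\'e followed by the gradient sup; none of these affects the substance, and your exponent bookkeeping $\gamma^{q+d-(d+2/d-2)}=\gamma^{4-3/d}$ matches the paper's.
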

\begin{proof}
Let $r=\gamma \rho$ and $\eta(x)$ be a smooth cut-off function supported in $B(1)$, $0\leq \eta\leq 1$ and $\eta\equiv 1$ on $\bar{B}(2/3)$. In the sense of distribution, for a.e. $t\in(t_0-\rho^2,t_0)$, one has
$$\Delta p=D_{ij}(u_iu_j).$$
We consider the decomposition
$$p=p_{x_0,\rho}+h_{x_0,\rho},$$
where $p_{x_0,\rho} $ is the Newtonian potential of
$$D_{ij}(u_iu_j\eta((x-x_0)/\rho)).$$
Then $h_{x_0,\rho}$ is harmonic in $B(x_0,2\rho/3)$.

By using the Calder\'{o}n-Zygmund estimate, we have
\begin{equation}\label{eqn21}
\int_{Q(z_0,r)}|p_{x_0,\rho}|^{\frac{2d-1}{d}} \,dz\leq \int_{Q(z_0,\rho)}|p_{x_0,\rho}|^{\frac{2d-1}{d}} \,dz \leq N\int_{Q(z_0,\rho)}|u|^{\frac{2(2d-1)}{d}} \,dz.
\end{equation}
From the Poincar\'{e} inequality and the fact that any Sobolev norm of harmonic function $h_{x_0,\rho}-[h_{x_0,\rho}]_{x_0,r}$ in a smaller ball can be estimated by any of its $L_p$ norm in $B(x_0,2\rho/3)$, one obtains
\begin{align}\label{eqn22}
\begin{split}
& \int_{B(x_0,r)}|h_{x_0,\rho}-[h_{x_0,\rho}]_{x_0,r}|^{\frac{2d-1}{d}} \,dx\\
&\leq Nr^{\frac{2d-1}{d}}\int_{B(x_0,r)}|\nabla h_{x_0,\rho}|^{\frac{2d-1}{d}} \,dx\\
&\leq Nr^{d+\frac{2d-1}{d}}\sup_{B(x_0,r)}|\nabla h_{x_0,\rho}|^{\frac{2d-1}{d}}\\
&\leq N\left(\frac{r}{\rho}\right)^{d+\frac{2d-1}{d}}\int_{B(x_0,\rho)}|h_{x_0,\rho}-[p]_{x_0,\rho}|^{\frac{2d-1}{d}}\,dx.
\end{split}
\end{align}
Integrating (\ref{eqn22}) in $t\in(t_0-r^2,t_0)$, we obtain
\begin{align}\label{eqn_48}
	\begin{split}
		 &\int_{Q(z_0,r)}|h_{x_0,\rho}-[h_{x_0,\rho}]_{x_0,r}|^{\frac{2d-1}{d}} \,dz\\
&\leq N\left(\frac{r}{\rho}\right)^{d+\frac{2d-1}{d}}\int_{Q(z_0,\rho)}\left(|p-[p]_{x_0,\rho}|^{\frac{2d-1}{d}}+| p_{x_0,\rho}|^{\frac{2d-1}{d}}\right) \,dz\\
&\leq N\left(\frac{r}{\rho}\right)^{d+\frac{2d-1}{d}}\int_{Q(z_0,\rho)}\left(|p-[p]_{x_0,\rho}|^{\frac{2d-1}{d}}+| u|^{\frac{2(2d-1)}{d}}\right) \,dz,
\end{split}
\end{align}
where we used (\ref{eqn21}) in the last inequality. We combine (\ref{eqn_48}), (\ref{eqn21}), and use the triangle inequality to have
\begin{align*}
	&\int_{ Q(z_0,r)}|p-[p]_{x_0,r}|^{\frac{2d-1}{d}}\,dz \\
	& \leq N \int_{ Q(z_0,r)}|p-[h_{x_0,\rho}]_{x_0,r}|^{\frac{2d-1}{d}}\,dz \\
	& \leq N \int_{ Q(z_0,r)}\left(|h_{x_0,\rho}-[h_{x_0,\rho}]_{x_0,r}|^{\frac{2d-1}{d}}+ |p_{x_0,\rho}|^{\frac{2d-1}{d}}\right)\,dz \\
	& \leq N\left(\frac{r}{\rho}\right)^{d+\frac{2d-1}{d}}\int_{Q(z_0,\rho)}\left(|p-[p]_{x_0,\rho}|^{\frac{2d-1}{d}}+| u|^{\frac{2(2d-1)}{d}}\right) \,dz.
\end{align*}
The lemma is proved.
\end{proof}
\begin{corollary}
	\label{col_CD}
	 Let $(u,p)$ be a pair of suitable weak solution of (\ref{NS1}). For some $z_0\in \R^d_T$, suppose there exist $\rho_0>0$ and $C_1>0$, such that $Q(z_0,\rho_0)\subset \mathbb{R}_T^{d+1}$ and $C(\rho,z_0)\leq C_1$ for all $\rho\in(0,\rho_0]$ and $D(\rho_0,z_0)\leq C_1$. Then we can find $C=C(C_1,d)>0$ such that $D(\rho,z)\leq C$ for all  $z\in Q(z_0,1/2)$ and $\rho\in(0,\rho_0/2]$.
\end{corollary}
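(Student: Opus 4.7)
The plan is to iterate Lemma \ref{lem_CD} at the base point $z_0$. The decisive feature is that the exponent $4 - 3/d$ in \eqref{eqn20} is strictly positive for $d \geq 4$, so by choosing the dyadic ratio $\gamma$ small the $D$-term on the right becomes contractive, while the $C$-term is controlled by the standing hypothesis.

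Concretely, I would fix once and for all $\gamma \in (0, 1/2]$, depending only on $d$, such that $N(d)\gamma^{4-3/d} \leq 1/2$, where $N(d)$ is the constant from \eqref{eqn20}. Setting $\rho_k = \gamma^k \rho_0$ and applying \eqref{eqn20} at the center $z_0$ with radii $\rho_k$, the hypothesis $C(\rho_k, z_0) \leq C_1$ yields the recurrence
\begin{equation*}
D(\rho_{k+1}, z_0) \leq N \gamma^{-(d+2/d-2)} C_1 + \tfrac{1}{2} D(\rho_k, z_0).
\end{equation*}
Starting from $D(\rho_0, z_0) \leq C_1$ and summing the resulting geometric series, induction produces $D(\rho_k, z_0) \leq C_0 := C_1 + 2 N \gamma^{-(d+2/d-2)} C_1$ for every $k \geq 0$, with $C_0$ depending only on $d$ and $C_1$. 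For a general scale $\rho \in (0, \rho_0]$, I pick $k$ with $\rho_{k+1} < \rho \leq \rho_k$; since $[p]_{x_0, \rho}(t)$ is, up to a factor of $2$, the best constant $L^{(2d-1)/d}$ approximation of $p(t,\cdot)$ on $B(x_0,\rho)$ and $Q(z_0, \rho) \subset Q(z_0, \rho_k)$, we have
\begin{equation*}
\rho^{d+2/d-2} D(\rho, z_0) \leq 2^{(2d-1)/d} \rho_k^{d+2/d-2} D(\rho_k, z_0),
\end{equation*}
and hence $D(\rho, z_0) \leq N(d) \gamma^{-(d+2/d-2)} C_0$, which is the desired bound at the center.

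The extension to nearby $z$ is by rerunning the same iteration at the shifted base point: the inclusion $Q(z, \rho_0/2) \subset Q(z_0, \rho_0)$ together with the constant-approximation argument immediately supplies $D(\rho_0/2, z) \leq N(d) C_1$ as the starting value. The main obstacle I anticipate is obtaining the $C$-bound at $z$ uniformly in $\rho$: the hypothesis supplies $C(\cdot, z_0) \leq C_1$ only at the center, and the naive comparison $Q(z, \rho) \subset Q(z_0, \rho + |z - z_0|)$ is effective only when $\rho$ is comparable to or larger than the parabolic distance from $z$ to $z_0$. Extracting a bound on $C(\cdot, z)$ that is uniform for all small $\rho$ is the one delicate point; once it is in place, the same choice of $\gamma$ (which depends only on $d$) drives the contraction and produces the advertised constant $C(C_1, d)$.
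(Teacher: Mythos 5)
Your approach matches the paper's exactly: fix $\gamma$ small enough that $N\gamma^{4-3/d} \leq 1/2$ (possible since $d\geq 4$), iterate (\ref{eqn20}) at the center to obtain a uniform bound along the dyadic scales $\gamma^k\rho_0$, then interpolate to all intermediate $\rho$ via the ratio factor $\gamma^{-(d+2/d-2)}$. The paper treats a general $z$ near $z_0$ by rerunning the same recursion based at $z$, seeded by the containment $Q(z,\rho_0/2)\subset Q(z_0,\rho_0)$ giving $D(\rho_0/2,z) \leq N D(\rho_0,z_0)$ --- precisely what you propose.

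The obstacle you flag at the end is a real one, and it is present in the paper's own proof as well: every inductive step at the shifted point $z$ invokes $C(\gamma^k\rho_0/2, z) \leq C_1$, which does not follow from the hypothesis $C(\rho, z_0) \leq C_1$ alone. As you note, the naive containment $Q(z,\rho)\subset Q(z_0,\rho+|z-z_0|)$ produces a comparison factor of $(1 + |z-z_0|/\rho)^{d+2/d-2}$ that blows up as $\rho\to 0$. The resolution here is contextual rather than technical: in every application of this corollary (Lemma~\ref{lem_bdd} and its analogues), the $C$-bound holds at \emph{every} center, because by (\ref{eqn56}) one has $C(r,z)\leq N\norm{u}^{2(2d-1)/d}_{L^t_\infty L^x_d(Q(z,r))}$, which is controlled uniformly in $z$ and $r$ by the standing $L^t_\infty L^x_d$ assumption. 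Read with that understanding --- i.e.\ with the hypothesis $C(\rho,z)\leq C_1$ for all centers $z$ in the relevant neighborhood of $z_0$, not just at $z_0$ --- your argument is complete and coincides with the paper's. So the ``delicate point'' you identify is an imprecision in the corollary's statement, silently assumed away in the paper's own proof, rather than a gap you were expected to fill by some additional idea.
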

\begin{proof}
	Note that $Q(z,\rho_0/2)\subset Q(z_0,\rho_0)$ for $z\in Q(z_0,1/2)$, hence $D(\rho_0/2,z)\leq ND(\rho_0,z_0)$.
	By fixing $\gamma$ small enough that $N\gamma^{4-3/d}\leq 1/2$ and using (\ref{eqn20}), we have
	$$D(\gamma\rho_0/2,z)\leq \frac{1}{2}D(\rho_0/2,z)+C_2,$$
	where $C_2 = N\gamma^{-d-2/d+2}C_1$.
	Inductively, for any integer $k$ we have
	$$D(\gamma^k\rho_0/2,z)\leq \frac{1}{2^k}D(\rho_0/2,z)+C_2\sum_{j=0}^{k-1}\frac{1}{2^j}\leq C_1+2C_2:=C_3.$$
	Now for any $\gamma^{k+1}\rho_0\leq \rho <\gamma^k\rho_0$, we can control $D(\rho/2,z)$ by
	$$D(\rho/2,z)\leq \left(\frac{\gamma^k\rho_0}{\gamma^{k+1}\rho_0}\right)^{d+2/d-2}D(\gamma^k\rho_0/2,z)\leq \gamma^{-d-2/d+2}C_3:=C.$$
	The corollary is proved because $C$ is independent of $k$.
	\end{proof}

\subsection{Step 2}
We will find some decay rates for the scale invariant quantities with respect to the radius assuming the quantities are initially small.
\begin{lemma}\label{lem_a0_whole}
There exists a universal constant $\epsilon_0>0$ satisfying the following property. Suppose that for some $z_0=(t_0,x_0)$ and $\rho_0>0$, it holds that $Q(z_0,\rho_0)\subset \mathbb{R}_T^{d+1}$ and
\begin{equation}\label{eqn5}
A(\rho_0,z_0)+E(\rho_0,z_0)+D(\rho_0,z_0)\leq \epsilon_0.
\end{equation}
Then fixing any  $\alpha_0\in (0,2) $, there exists $N>0$ such that for any $\rho
\in(0,\rho_0/2)$ and $z\in Q(z_0,\rho_0/2)$, the following estimate holds uniformly
\begin{equation}\label{eqn6}
A(\rho,z)+E(\rho,z)+C(\rho,z)^{\frac{d-2}{d-2+2/d}}+D(\rho,z)\leq N\epsilon_0^{\alpha_0/2}\left(\frac{\rho}{\rho_0}\right)^{\alpha_0},
\end{equation}
where  $N$ is a positive constant depending on $\alpha_0$, but independent of $\epsilon_0$, $\rho_0$, $\rho$, and $z$.
\end{lemma}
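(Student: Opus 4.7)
The plan is to combine Lemmas \ref{lem7}, \ref{lem8}, and \ref{lem_CD} into a self-improving decay iteration on the joint scale-invariant quantity
\[
\Psi(\rho,z):=A(\rho,z)+E(\rho,z)+D(\rho,z),
\]
and then to transfer the resulting decay to the stated estimate. The $C$-term appearing in \eqref{eqn6} is for free once $\Psi$ is controlled: Lemma \ref{lem7} with $\alpha=1$ and $p=4-2/d$ (admissible precisely because $d\ge4$) gives $C(\rho,z)^{(d-2)/(d-2+2/d)}\le N\Psi(\rho,z)$.

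First I would reduce to an iteration at a single center. For any $z\in Q(z_0,\rho_0/2)$ one has $Q(z,\rho_0/2)\subset Q(z_0,\rho_0)$, and a direct inclusion comparison, together with a triangle-inequality step to absorb the change of pressure mean in $D$, yields $\Psi(\rho_0/2,z)\le c(d)\,\Psi(\rho_0,z_0)\le c(d)\epsilon_0$. Next, combining Lemma \ref{lem8} with Lemma \ref{lem_CD} and using $C(\rho)\le N\Psi(\rho)^{(d-2+2/d)/(d-2)}$ to eliminate $C$, I observe that every nonlinear term on the right-hand side carries a power of $\Psi$ strictly greater than $1$: the three excess exponents are $\tfrac{1}{d-2}$, $\tfrac{1}{2(2d-1)}$, and $\tfrac{2}{d(d-2)}$, all positive. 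Letting $\delta>0$ denote their minimum, this yields, whenever $\Psi(\rho,z)\le\epsilon\le 1$, an iteration of the schematic form
\[
\Psi(\gamma\rho,z)\le N\bigl(\gamma^2+\gamma^{-M}\epsilon^{\delta}\bigr)\,\Psi(\rho,z),
\]
with $M=M(d)$ and $N=N(d)$. This is the engine of the argument.

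Given $\alpha_0\in(0,2)$, I fix $\gamma=\gamma(d,\alpha_0)\in(0,1/2]$ small enough that $N\gamma^2\le\tfrac12\gamma^{\alpha_0}$ (possible exactly because $\alpha_0<2$), and then shrink $\epsilon_0$ so that $c(d)N\gamma^{-M}\epsilon_0^{\delta}\le\tfrac12\gamma^{\alpha_0}$. The iteration then propagates and a simple induction on $k$ gives $\Psi(\gamma^k\rho_0/2,z)\le\gamma^{\alpha_0 k}c(d)\epsilon_0$ for every $k\ge 0$ and every $z\in Q(z_0,\rho_0/2)$. For a generic radius $\rho\in(0,\rho_0/2)$ I pick $k$ with $\gamma^{k+1}\rho_0/2\le\rho<\gamma^k\rho_0/2$ and use the trivial domain-inclusion monotonicity $\Psi(\rho,z)\le\gamma^{-\kappa}\Psi(\gamma^k\rho_0/2,z)$ for some $\kappa=\kappa(d)$ to interpolate, obtaining $\Psi(\rho,z)\le N\epsilon_0(\rho/\rho_0)^{\alpha_0}$, which is in fact stronger than \eqref{eqn6} since $\epsilon_0\le\epsilon_0^{\alpha_0/2}$ whenever $\epsilon_0\le 1$ and $\alpha_0/2<1$. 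The main obstacle, and the delicate part of the argument, is the order of quantifiers in the smallness bookkeeping: $\gamma$ must be fixed from $\alpha_0$ and $d$ alone, and $\epsilon_0$ then fixed from $\gamma$, $d$, and the universal constants, so that the inclusion-comparison constant $c(d)$ does not circularly degrade the smallness threshold; the restriction $d\ge4$ enters only through the admissible range of $p$ in Lemma \ref{lem7}, which is built into Lemma \ref{lem8}.
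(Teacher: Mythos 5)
Your proposal is correct and reaches the paper's conclusion, in fact slightly strengthened (an $\epsilon_0$ prefactor rather than $\epsilon_0^{\alpha_0/2}$). The route is genuinely different from the paper's: the paper normalizes $\rho_1^\alpha = N\epsilon_0$ and iterates along a variable exponential step $\rho_{k+1}=\rho_k^{1+\beta}$, propagating the ansatz $A(\rho_k)+E(\rho_k)\le\rho_k^\alpha$, $D(\rho_k)\le\rho_k^\alpha$ and then interpolating; you instead run the standard constant-ratio ``improvement of oscillation'' iteration $\rho_{k+1}=\gamma\rho_k$ with a self-improving inequality of the form $\Psi(\gamma\rho)\le N(\gamma^2+\gamma^{-M}\epsilon^{\delta})\Psi(\rho)$, which is cleaner, avoids the normalization case split, and still produces the decay. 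Your factorization of the right-hand side checks out: the one term you did not list as superlinear, $\gamma^{4-3/d}D(\rho)$ from Lemma~\ref{lem_CD}, is linear in $\Psi$ but carries $\gamma^{4-3/d}$ with $4-3/d>2$, so it is absorbed into $N\gamma^2\Psi(\rho)$; the remaining terms all carry a strictly superlinear power of $\Psi$ as you say, with $\delta=\min\{1/(d-2),\,1/(2(2d-1)),\,2/(d(d-2))\}$ and $M=d-1$ workable.

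One remark on the quantifier order, which you flagged but presented as a feature: in your scheme $\gamma$ is chosen from $\alpha_0$ (via $N\gamma^{2-\alpha_0}\le 1/2$), and $\epsilon_0$ is then chosen from $\gamma$, so $\epsilon_0$ ends up depending on $\alpha_0$, whereas the lemma as stated asserts $\epsilon_0$ is universal and $\alpha_0$ is chosen afterwards. The paper's own proof has exactly the same dependence tucked into the requirement $N(N\epsilon_0)^{\xi/2}<1$ with $\xi$ depending on $\beta$ and hence on $\alpha$, $\alpha_0$, so this is a shared gloss rather than a gap specific to your argument; it is harmless in practice because the lemma is only ever invoked with a single fixed $\alpha_0$ (effectively $\alpha_0=2-\delta_1$ with $\delta_1=1/(2d)$ in Step~3). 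If one wanted a genuinely universal $\epsilon_0$, one could either restate the lemma with $\alpha_0$ fixed as a function of $d$, or prepend a bootstrap: run the iteration once with a universal $\gamma$ to obtain some universal positive decay exponent $\alpha_1(d)$, which in particular makes $\Psi(\rho,z)$ as small as desired for small $\rho$, and then rerun with a smaller $\gamma$ to boost the exponent, iterating finitely many times until it exceeds $\alpha_0$.
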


\begin{proof}
For any $z\in Q(z_0,\rho_0/2)$, by (\ref{eqn5}) and
$$Q(z,\rho_0/2)\subset Q(z_0,\rho_0)\subset \mathbb{R}_T^{d+1},$$
we get
\begin{equation}
\label{eqn_step2}
	A(\rho_1,z)+E(\rho_1,z)+D(\rho_1,z)\leq N\epsilon_0,
\end{equation}
where $\rho_1=\rho_0/2.$
By Lemma \ref{lem7},
\begin{equation}
\label{eqn316}
		C(\rho_1,z)\leq(N\epsilon_0)^{\frac{d-2+2/d}{d-2}}.
\end{equation}

Next we fix an auxiliary parameter  $\alpha\in(\alpha_0,2)$. By a scaling argument, we first discuss a special case when $\rho_1^{\alpha} = N\epsilon_0 < 1 $.
In this case, we can prove the following decay rates inductively:
\begin{equation}\label{eqn13}
A(\rho_k)+E(\rho_k)\leq \rho_k^{\alpha},\quad C(\rho_k)^{\frac{d-2}{d-2+2/d}}\leq \rho_k^{\alpha}, \quad D(\rho_k)\leq \rho_k^{\alpha},
\end{equation}
where $\rho_{k+1}=\rho_k^{1+\beta}=\rho_1^{(1+\beta)^k}$ and $\beta$ is a small number to be specified. For $k=1$, the statement follows from  (\ref{eqn_step2}), (\ref{eqn316}), and our assumption that  $\rho_1^{\alpha} = N\epsilon_0$. Next  by choosing $\gamma=\rho_k^{\beta}$ and $\rho=\rho_k$ in (\ref{eqn2}) and (\ref{eqn20}), we have
$$
A(\rho_{k+1})+E(\rho_{k+1})\leq N\left[ \rho_k^{2\beta+\alpha}+\rho_k^{(-d+1)\beta}\left(\rho_k^{\frac{d-1}{d-2}\alpha}+\rho_k^{\left(\frac{1}{2}+\frac{d}{2d-1}\right)\alpha}\right)\right].
$$
$$D(\rho_{k+1})\leq N\left[ \rho_k^{(-d-\frac{2}{d}+2)\beta+\left(1+\frac{2}{d(d-2)}\right)\alpha}+\rho_k^{\left(4-\frac{3}{d}\right)\beta+\alpha}\right].$$
We choose $\beta$ satisfying
$$\beta < \min\left\{\frac{\alpha}{(d-2)(d+\alpha-1)},\frac{\alpha}{2(2d-1)(d+\alpha-1)},\frac{2\alpha}{d(\alpha+d+\frac{2}{d}-2)(2d-2)} \right\}. $$
Then all the exponents on the right-hand sides are greater than $(1+\beta)\alpha$.

Now we can find $\xi>0$ depending on $\beta$ such that
 \begin{equation*}
A(\rho_{k+1})+E(\rho_{k+1})\leq N\rho_{k+1}^{\alpha+\xi}, \quad D(\rho_{k+1})\leq N\rho_{k+1}^{\alpha+\xi},
\end{equation*}
where $N$ is a constant independent of $k$ and $\xi$. By taking $\epsilon_0$ small enough such that $N\rho_{k+1}^{\xi}< N\rho_1^{\xi} <N(N\epsilon_0)^{\xi/2}<1$ , we obtain
\begin{equation*}
A(\rho_{k+1})+E(\rho_{k+1})\leq \rho_{k+1}^{\alpha}, \quad D(\rho_{k+1})\leq \rho_{k+1}^{\alpha}.
\end{equation*}
By induction, we have justified (\ref{eqn13}) for the case when $\rho_1^{\alpha} = N\epsilon_0$.

For convenience, we additionally assume that the parameter $\beta$ satisfying
$$
\alpha_0 < \min\left\{\frac{1}{1+\beta}(\alpha-(d-2)\beta),\frac{1}{(1+\beta)}\left(\alpha-\left(d+\frac{2}{d}-2\right)\beta\right)\right\}.$$ There always exists feasible $\beta$ because $\alpha>\alpha_0$.

Now for any $\rho\in(0,\rho_0/2)$, we can find a positive integer $k$ such that $\rho_{k+1}\leq \rho<\rho_k$. Then
$$A(\rho)+E(\rho)\leq \left(\frac{\rho_k}{\rho_{k+1}}\right)^{d-2}(A(\rho_k)+E(\rho_k))\leq \rho_k^{\alpha-(d-2)\beta}=\rho_{k+1}^{\frac{1}{1+\beta}(\alpha-(d-2)\beta)}\leq \rho^{\alpha_0},$$
$$D(\rho)\leq \left(\frac{\rho_k}{\rho_{k+1}}\right)^{d+\frac{2}{d}-2}D(\rho_k)\leq \rho_k^{\alpha-(d+\frac{2}{d}-2)\beta}=  \rho_{k+1}^{\frac{1}{1+\beta}(\alpha-(d+\frac{2}{d}-2)\beta)}\leq \rho^{\alpha_0}.$$
Hence we have proved \eqref{eqn6} when $\rho_1^{\alpha} = N\epsilon_0$. For the general case, we  use the scale invariant property of the quantities and apply the previous results with an additional scaling factor $N\epsilon_0^{\alpha_0/\alpha}\rho_{0}^{-\alpha_0}$   on the right-hand side.
\end{proof}

\subsection{Step 3} In this final step, we first use parabolic $L_p$ estimates to further improve the decay rate and then conclude the result by using Campanoto's characterization of H\"{o}lder continuity.
\begin{lemma}\label{lem1} Suppose $f(\rho_{0})\leq C_0$. If there exist $\alpha >\beta >0$ and $C_1,C_2>0$ such that for any $0<r<\rho\le\rho_{0}$, it holds that
$$f(r)\leq C_1\left(\frac{r}{\rho}\right)^{\alpha}f(\rho)+C_2\rho^{\beta},$$
then there exist constants $ C_3,C_4>0$ depending on  $C_0,C_1,C_2,\alpha,\beta$, such that
$$f(r)\leq C_3\left(\frac{r}{\rho_0}\right)^{\beta}f(\rho_0)+ C_4r^{\beta}$$
for $0<r\le\rho_{0}$.

\end{lemma}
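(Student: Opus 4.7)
The plan is to use the classical Campanato-type dyadic iteration. The key observation that makes the conclusion work is $\alpha>\beta$, which lets us choose a fixed geometric ratio $\tau\in(0,1)$ that absorbs the first term in the hypothesis into a decay estimate at rate $\beta$.

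First I would fix $\tau\in(0,1)$ small enough that $C_1\tau^{\alpha-\beta}\leq 1/2$; this is the whole reason we need $\alpha>\beta$. Define the geometric sequence $\rho_k=\tau^k\rho_0$ and $a_k=f(\rho_k)$. Applying the hypothesis with $(r,\rho)=(\rho_{k+1},\rho_k)$ yields
\begin{equation*}
a_{k+1}\leq C_1\tau^\alpha a_k + C_2\rho_k^{\beta}\leq \tfrac{1}{2}\tau^{\beta}a_k + C_2\tau^{k\beta}\rho_0^{\beta}.
\end{equation*}
Dividing by $\tau^{(k+1)\beta}$ and setting $b_k=a_k/\tau^{k\beta}$ converts this into the standard linear recursion $b_{k+1}\leq \tfrac12 b_k + M$ with $M=C_2\rho_0^{\beta}/\tau^{\beta}$. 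A one-line induction then gives $b_k\leq b_0+2M$, i.e.\
\begin{equation*}
a_k\leq \bigl(f(\rho_0)+2C_2\rho_0^{\beta}/\tau^{\beta}\bigr)\tau^{k\beta}
\end{equation*}
for every $k\geq 0$.

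To pass from the discrete sequence to arbitrary $r\in(0,\rho_0]$, I would choose the unique integer $k\geq 0$ with $\rho_{k+1}\leq r<\rho_k$ and apply the hypothesis once more with $\rho=\rho_k$, noting that $(r/\rho_k)^{\alpha}\leq 1$:
\begin{equation*}
f(r)\leq C_1 a_k + C_2\rho_k^{\beta}\leq \bigl(C_1 f(\rho_0)+(2C_1C_2/\tau^{\beta}+C_2)\rho_0^{\beta}\bigr)\tau^{k\beta}.
\end{equation*}
Since $r>\tau^{k+1}\rho_0$ we have $\tau^{k\beta}\leq \tau^{-\beta}(r/\rho_0)^{\beta}$, and substituting this gives exactly
\begin{equation*}
f(r)\leq C_3\left(\frac{r}{\rho_0}\right)^{\beta}f(\rho_0) + C_4\, r^{\beta},
\end{equation*}
with $C_3=C_1\tau^{-\beta}$ and $C_4=\tau^{-\beta}(2C_1C_2/\tau^{\beta}+C_2)$, both depending only on $C_1,C_2,\alpha,\beta$ (the hypothesis $f(\rho_0)\leq C_0$ serves only to make the constants uniform if one wishes to replace $f(\rho_0)$ in the first term by $C_0$).

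There is no serious obstacle in this lemma — it is a routine but indispensable iteration scheme. The only point that needs a little care is the bookkeeping for non-dyadic $r$: one must use the hypothesis once at the finest dyadic scale above $r$ to pick up the factor $(r/\rho_k)^{\alpha}\leq 1$, and then trade $\tau^{k\beta}$ for $(r/\rho_0)^{\beta}$ at the cost of the harmless constant $\tau^{-\beta}$.
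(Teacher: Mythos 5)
Your proof is correct and is the standard Campanato-type iteration argument; the paper itself supplies no proof and simply cites Giaquinta (Chapter~III, Lemma~2.1), where essentially this same geometric iteration appears. One trivial cosmetic point: your $C_3=C_1\tau^{-\beta}$ could be less than $1$, so the claimed bound at $r=\rho_0$ requires replacing $C_3$ by $\max(C_1\tau^{-\beta},1)$, a harmless adjustment.
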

\begin{proof}
See, for instance, \cite[Chapter III, Lemma 2.1]{iter}.
\end{proof}
By Lemma \ref{lem_a0_whole} we know, for any  small $\delta_1>0$, the following estimates are true for all $\rho\in (0,\rho_0/2)$ sufficiently small and $z_1:=(t_1,x_1)\in Q(z_0,\rho_0/2)$:
\begin{align}\label{eqn15}
&\int_{Q(z_1,\rho)}|u|^{4-\frac{2}{d}}\, dz\leq N\rho^{d+\frac{2}{d-2}-\delta_1},\\
\label{eqn16}
&\int_{Q(z_1,\rho)}|p-[p]_{x_1,\rho}|^{2-\frac{1}{d}}\, dz\leq N\rho^{d+\frac{2}{d}-\delta_1}.
\end{align}
Let $v$ be the unique weak solution to the heat equation
$$\partial_t v-\Delta v=0 \quad \text{in }  Q(z_1,\rho),$$
with the boundary condition $v=u$ on $\partial_pQ(z_1,\rho)$.
Let $0<r<\rho$. By the Poincar\'{e} inequality with zero mean value  and using the fact that $L_{\infty}$ norm of the gradient of a caloric function in a smaller cylinder is controlled by any $L_p$ norm of it in a larger cylinder. We have

\begin{align}\label{eqn18}
\begin{split}
\int_{Q(z_1,r)}|v-(v)_{z_1,r}|^{2-\frac{1}{d}}\, dz&=\int_{Q(z_1,r)}|v-(u)_{z_1,\rho}-(v-(u)_{z_1,\rho})_{z_1,r}|^{2-\frac{1}{d}}\, dz  \\
&\leq r^{d+4-\frac{1}{d}}\norm{\nabla \left(v-(u)_{z_1,\rho}\right)}^{2-\frac 1 d}_{L_{\infty}(Q(z_1,r))}\\
&\leq N\left(\frac{r}{\rho}\right)^{d+4-\frac{1}{d}}\int_{Q(z_1,\rho)}|v-(u)_{z_1,\rho}|^{2-\frac{1}{d}}\, dz.
\end{split}
\end{align}
Denote $w=u-v$. Then $w$ satisfies the inhomogeneous heat equation
$$\partial_t w_i-\Delta w_i=-\partial_j(u_iu_j)-\partial_i(p-[p]_{x_1,\rho}) \quad \text{in } Q(z_1,\rho)$$
with the zero boundary condition. By the classical $L_p$ estimate for the heat equation, we have
$$\Vert\nabla w\Vert_{L_{2-\frac{1}{d}}(Q(z_1,\rho))}\leq N\left[\left\lVert |u|^2 \right\rVert_{L_{2-\frac{1}{d}}(Q(z_1,\rho))} + \left\lVert p-[p]_{x_1,\rho}\right\rVert_{L_{2-\frac{1}{d}}(Q(z_1,\rho))}  \right],$$
which together with (\ref{eqn15}) and (\ref{eqn16})  yields
\begin{equation}\label{eqn17}
\int_{Q(z_1,\rho)}|\nabla w|^{2-\frac{1}{d}}\, dz\leq N \rho^{d+\frac{2}{d}-\delta_1}.
\end{equation}
By the Poincar\'{e} inequality with zero boundary condition, we get from \eqref{eqn17} that
\begin{equation}\label{eqn19}
\int_{Q(z_1,\rho)}|w|^{2-\frac{1}{d}}\, dz\leq N \rho^{d+\frac{2}{d}-\delta_1+2-\frac{1}{d}} = N \rho^{d+2+\frac{1}{d}-\delta_1}.
\end{equation}
Using (\ref{eqn18}), (\ref{eqn19}), and the triangle inequality, we have
\begin{align*}
&\int_{Q(z_1,r)}|u-(u)_{z_1,r}|^{2-\frac{1}{d}}\, dz \\
&\leq \int_{Q(z_1,r)}|v-(v)_{z_1,r}|^{2-\frac{1}{d}}\, dz+\int_{Q(z_1,r)}|w-(w)_{z_1,r}|^{2-\frac{1}{d}}\, dz\\
& \leq N\left(\frac{r}{\rho}\right)^{d+4-\frac{1}{d}}\int_{Q(z_1,\rho)}|v-(u)_{z_1,\rho}|^{2-\frac{1}{d}}\, dz + N \rho^{d+2+\frac{1}{d}-\delta_1}\\
&\leq N\left(\frac{r}{\rho}\right)^{d+4-\frac{1}{d}}\int_{Q(z_1,\rho)}|u-(u)_{z_1,\rho}|^{2-\frac{1}{d}}\, dz  \\
&\quad +N\left(\frac{r}{\rho}\right)^{d+4-\frac{1}{d}}\int_{Q(z_1,\rho)}|w|^{2-\frac{1}{d}}\, dz + N \rho^{d+2+\frac{1}{d}-\delta_1}\\
& \leq N\left(\frac{r}{\rho}\right)^{d+4-\frac{1}{d}}\int_{Q(z_1,\rho)}|u-(u)_{z_1,\rho}|^{2-\frac{1}{d}}\, dz +N \rho^{d+2+\frac{1}{d}-\delta_1}.
\end{align*}
Applying Lemma \ref{lem1} and choosing $\delta_1=\frac{1}{2d}$, we obtain
$$\int_{Q(z_1,r)}|u-(u)_{z_1,r}|^{2-\frac{1}{d}}\, dz\leq N r^{d+2+\frac{1}{2d}}, $$
for any $r\in (0,\rho_0/4)$ and $z_1\in Q(z_0,\rho_0/4)$.
We then conclude that $u$ is H\"{o}lder continuous near $z_0$ by Campanato's characterization of H\"{o}lder continuity.

\section{H\"{o}lder Continuity Boundary Estimate}
\label{sec_holder_2}
In this section, we consider the case when $\W = \bR^d_+$. We again use a three-step approach to prove an $\epsilon$-regularity criterion near boundary. The main difference from the interior estimate is the iteration dealing with the pressure term.
\begin{theorem}
	\label{thm_holder_half}
	Let $(u,p)$  be a suitable weak solution of (\ref{NS1})-(\ref{NS2}) in $(0,T)\times\mathbb{R}_+^{d}$ satisfying (\ref{eqn_Ldcondition}). There exists a universal constant $\hat{\epsilon}_0$ satisfying the following property. Assume that for a point $\hat{z}=(\hat{t},\hat{x})$, where $\hat{x}=(x',0)$, and for some $\rho_0>0$ we have $Q^+(\hat z,\rho_0)\subset (0,T)\times \bR^d_+$ and
	$$A^+(\rho_0,\hat{z})+E^+(\rho_0,\hat{z})+D^+(\rho_0,\hat{z})\leq \hat{\epsilon}_0.$$
	Then $u$ is H\"{o}lder continuous near $\hat{z}$.
\end{theorem}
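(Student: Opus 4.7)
The plan is to transplant the three-step proof of Theorem \ref{thm_holder_whole} to the half-space, with the one substantive modification coming in the estimate for $D^+$, where the interior harmonic decomposition of the pressure must be replaced by a splitting based on the linearized Stokes system.

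\textbf{Step 1: scale-invariant inequalities with $(+)$.} First I would reproduce the interior estimates of Section \ref{sec_holder_1} on half-cylinders. Because $u$ vanishes on $\{x_d=0\}$, the Poincar\'e inequality and Sobolev embedding on $B^+$ still yield the interpolation Lemma \ref{lem7} with $A^+,E^+,C^+$ in place of $A,E,C$. The local energy inequality in the definition of suitable weak solutions admits test functions $\psi\in C_0^{\infty}(\overline{\W_T})$ that need not vanish on $\{x_d=0\}$, so a cut-off supported in $Q^+(z_0,\rho)$ gives the analog of Lemma \ref{lemAECD},
\begin{equation*}
A^+(\rho/2)+E^+(\rho/2)\leq N\bigl(C^+(\rho)^{\tfrac{d}{2d-1}}+C^+(\rho)^{\tfrac{3d}{2(2d-1)}}+C^+(\rho)^{\tfrac{d}{2(2d-1)}}D^+(\rho)^{\tfrac{d}{2d-1}}\bigr),
\end{equation*}
and the backward-heat-kernel argument with cut-off $\phi$ localized to the half-ball gives the $(+)$ version of Lemma \ref{lem8}.

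\textbf{The main obstacle: the pressure at the boundary.} The Newton-potential/harmonic decomposition of Lemma \ref{lem_CD} is unavailable at a boundary point because $p$ no longer satisfies a clean Poisson problem there. Instead, I would split $(u,p)=(v,p^v)+(w,p^w)$ on $Q^+(z_0,\rho)$, where $(v,p^v)$ solves the inhomogeneous Stokes system
\begin{equation*}
\partial_t v-\Delta v+\nabla p^v=-\operatorname{div}(u\otimes u),\qquad \operatorname{div} v=0,
\end{equation*}
with zero initial, lateral, and outer-spatial data, and $(w,p^w):=(u-v,p-p^v)$ solves the homogeneous Stokes system in $Q^+(z_0,\rho)$ with $w=0$ on $\{x_d=0\}$. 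Solonnikov-type $L_q$ theory for the half-space Stokes problem gives
\begin{equation*}
\norm{p^v}_{L_{2-1/d}(Q^+(z_0,\rho))}\leq N\norm{\abs{u}^2}_{L_{2-1/d}(Q^+(z_0,\rho))},
\end{equation*}
so $p^v$ is controlled by $C^+(\rho)$, while tangential regularity for the homogeneous Stokes system yields sup-norm bounds on $\nabla p^w$ over $Q^+(z_0,\rho/2)$ in terms of any $L_q$ mean of $p^w-[p^w]$ over $Q^+(z_0,\rho)$. Combining with Poincar\'e's inequality in $x$ and the triangle inequality reproduces the same scaling as in Lemma \ref{lem_CD}:
\begin{equation*}
D^+(\gamma\rho)\leq N\bigl(\gamma^{-d-2/d+2}C^+(\rho)+\gamma^{4-3/d}D^+(\rho)\bigr).
\end{equation*}
This pressure estimate is the hardest technical ingredient; it is precisely what forces the assumption $p\in L_{2-1/d}$ to appear in the local hypothesis.

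\textbf{Steps 2 and 3: iteration and Campanato.} With these four inequalities in place, the bootstrap of Lemma \ref{lem_a0_whole} transfers verbatim, yielding for every $\alpha_0\in(0,2)$ the uniform decay
\begin{equation*}
A^+(\rho,z)+E^+(\rho,z)+C^+(\rho,z)^{(d-2)/(d-2+2/d)}+D^+(\rho,z)\leq N\hat\epsilon_0^{\alpha_0/2}\Bigl(\frac{\rho}{\rho_0}\Bigr)^{\alpha_0}
\end{equation*}
on $z\in Q^+(\hat z,\rho_0/2)$. In the final Campanato step, rather than comparing $u$ with a caloric extension as in Section \ref{sec_holder_1}, I would compare $(u,p)$ on $Q^+(z_1,\rho)$ with the homogeneous Stokes solution whose parabolic boundary data agrees with $u$ (so, in particular, vanishes on $\{x_d=0\}$). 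Classical $L_{2-1/d}$ estimates for the half-space Stokes problem bound the difference $w=u-v$ in terms of $\norm{\abs{u}^2}_{L_{2-1/d}}$ and $\norm{p-[p]_{x_1,\rho}}_{L_{2-1/d}}$, both of which decay as small powers of $\rho$ by Step 2. Coupled with the tangential gradient estimate for the homogeneous Stokes system, Lemma \ref{lem1} then closes the iteration and yields
\begin{equation*}
\int_{Q^+(z_1,r)}\abs{u-(u)_{z_1,r}}^{2-1/d}\,dz\leq Nr^{d+2+\delta}
\end{equation*}
for some $\delta>0$ and every $z_1$ near $\hat z$, from which Campanato's characterization gives the claimed H\"older continuity of $u$ in the closure of a small half-cylinder around $\hat z$.
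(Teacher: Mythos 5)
You assert that the half-space pressure iterate reproduces exactly the interior scaling
\begin{equation*}
D^+(\gamma\rho)\leq N\bigl(\gamma^{-d-2/d+2}C^+(\rho)+\gamma^{4-3/d}D^+(\rho)\bigr),
\end{equation*}
by invoking ``sup-norm bounds on $\nabla p^w$ over $Q^+(z_0,\rho/2)$'' for the homogeneous Stokes pressure $p^w$. This is the step that fails. In the interior, $h_{x_0,\rho}$ is a genuinely harmonic function in a strictly interior ball, so $\sup\abs{\nabla h}$ on a smaller concentric ball is controlled by any $L_q$ mean on a larger ball; that is what gives the clean exponent $4-3/d$. At a flat boundary point, $p^w$ is harmonic in $x$ but its boundary condition (coming from the Stokes no-slip condition on $w$) is not Dirichlet, and the standard boundary regularity theory for the Stokes problem (the estimates of Seregin that are packaged as Lemma~\ref{lem_halfspace5}) only gives an $L_m\to L_s$ improvement for $s<\infty$, not $s=\infty$. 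Consequently, translating from $L_s$ to $L_{2-1/d}$ on the small half-ball $B^+(\gamma)$ costs a factor $\gamma^{-(2d-1)/s}$, so the best one can do is
\begin{equation*}
D^+(\gamma\rho)\leq N\Bigl[\gamma^{-d-2/d+2}\bigl(A^+(\rho)+E^+(\rho)\bigr)^{1+\frac{2}{d(d-2)}}+\gamma^{4-3/d-\delta_2}\bigl(D^+(\rho)+A^+(\rho)^{1-\frac{1}{2d}}+E^+(\rho)^{1-\frac{1}{2d}}\bigr)\Bigr]
\end{equation*}
with $\delta_2=(2d-1)/s>0$ arbitrary but nonzero. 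Note also that the Stokes decomposition naturally controls the first term in terms of $E^+A^{+2/(d(d-2))}$ rather than $C^+$, and produces the extra lower-order terms $A^{+1-\frac{1}{2d}}$ and $E^{+1-\frac{1}{2d}}$, neither of which appears in your claimed inequality.

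Because of this loss, your assertion that ``the bootstrap of Lemma~\ref{lem_a0_whole} transfers verbatim'' is wrong. With the exponent $4-3/d-\delta_2$ and the extra $A^{+1-\frac{1}{2d}},E^{+1-\frac{1}{2d}}$ terms, the one-step iteration $\rho_{k+1}=\rho_k^{1+\beta}$ no longer closes for $D^+$ with decay exponent $\alpha$; one is forced to iterate $D^+$ with a weaker exponent $\alpha\tau$, where $\tau=1-\frac{1}{2d}+\epsilon<1$, and to use a coarser step $\beta_1=(1+\beta)^{n_0+1}-1$ for the pressure iterate so that the upper and lower constraints on $\beta_1$ (coming from balancing $\gamma^{4-3/d-\delta_2}$ against $\gamma^{-d-2/d+2}$ with the reduced target exponent) are simultaneously satisfiable. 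The initial smallness hypothesis in the iteration must therefore be phrased as $A^+ + E^+ + D^{+\,1/\tau}\leq \hat\epsilon_0$, and the resulting decay in (\ref{eqn_halfspace6}) is $(\rho/\rho_0)^{\alpha_0^2\tau/2}$ rather than $(\rho/\rho_0)^{\alpha_0}$. You also need the separate analysis of interior vs.\ near-boundary points $z^*$ with $x^*_d\lessgtr\rho$ that the paper carries out in part (ii) of Lemma~\ref{lem_initial_halfspace}. None of this is cosmetic: without the $\tau$ correction the $D^+$ iteration does not close, and the claimed intermediate decay (\ref{eqn_halfspace6}) cannot be obtained. (By contrast, your choice of a Stokes comparison function versus the paper's caloric comparison in Step~3 is only a presentational difference; either would work once the decay rates from Step~2 are correct.)
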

\subsection{Step 1} We present several  inequalities for the scale invariant quantities.
\begin{lemma}\label{lem_halfspace2}
	Suppose function $u\in W_2^1(B_r^+)$ with $r>0$ vanish on the boundary $x_d=0$. For any $q\in [2,2d/(d-2)]$, we have
	\begin{equation*}
			\int_{B^+_r}|u|^q \,dx \leq N(q,d)\left(\int_{B^+_r}\abs{\nabla u}^2\,dx\right)^{d(q/4-1/2)}\left(\int_{B^+_r}\abs{u}^2\,dx\right)^{q/2-d(q/4-1/2)}.
	\end{equation*}
\end{lemma}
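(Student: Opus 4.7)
The plan is to mimic the proof of Lemma \ref{lem2} almost verbatim, with one key change that takes advantage of the Dirichlet condition on the flat face $\{x_d=0\}\cap B_r$: we can replace the general Sobolev inequality on $B_r^+$ by a Friedrichs-type inequality without any lower-order $L_2$ term, and this is precisely what accounts for the absence of the $r^{-d(q/2-1)}$ correction in the statement. By the standard rescaling $u(x)\mapsto u(rx)$, both sides of the claimed inequality carry the same power of $r$, so it suffices to prove the statement for $r=1$.

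The key ingredient is the scale-invariant half-ball Sobolev inequality
\begin{equation*}
\norm{u}_{L_{2d/(d-2)}(B_1^+)}\leq N\norm{\nabla u}_{L_2(B_1^+)}.
\end{equation*}
To obtain it, I would extend $u$ by zero across $\{x_d=0\}$ to a function $\tilde u\in W_2^1(B_1)$; this extension is weakly differentiable precisely because $u$ has zero trace on the flat face. The extended function $\tilde u$ vanishes identically on $B_1\setminus\overline{B_1^+}$, which is a set of positive $d$-dimensional measure, so a Poincar\'e inequality gives $\norm{\tilde u}_{L_2(B_1)}\leq N\norm{\nabla\tilde u}_{L_2(B_1)}$. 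Combined with the standard Sobolev embedding $W_2^1(B_1)\hookrightarrow L_{2d/(d-2)}(B_1)$, this yields the displayed inequality.

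The remaining argument then follows Lemma \ref{lem2} exactly: applying H\"older's inequality on $B_1^+$ gives
\begin{equation*}
\norm{u}_{L_q(B_1^+)}\leq \norm{u}_{L_2(B_1^+)}^{1-\theta}\norm{u}_{L_{2d/(d-2)}(B_1^+)}^{\theta},\qquad \theta=\tfrac{d(q-2)}{2q},
\end{equation*}
valid for $q\in[2,2d/(d-2)]$; substituting the Sobolev bound above and raising both sides to the $q$-th power produces the stated interpolation inequality. Because the half-ball Sobolev inequality carries no $L_2$ remainder, only the $\norm{\nabla u}_{L_2}$ term appears, and we recover exactly one of the two summands present in Lemma \ref{lem2}. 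I do not anticipate any genuine obstacle here: the zero-extension-plus-Poincar\'e step is standard, and an alternative route via odd reflection across $\{x_d=0\}$ together with zero-mean Poincar\'e on $B_1$ works equally well.
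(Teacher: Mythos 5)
Your proposal is correct and follows essentially the same route as the paper: the paper obtains the two-term inequality of Lemma \ref{lem2} on $B_r^+$ and then absorbs the $\bigl(\int|u|^2\bigr)^{q/2}$ remainder using the Poincar\'e inequality (via odd extension across the flat face), whereas you build the remainder-free half-ball Sobolev inequality first via zero extension plus Poincar\'e and then interpolate -- the two are the same argument packaged in a different order, with the same key ingredient (Poincar\'e on the half ball for functions vanishing on $\{x_d=0\}$).
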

\begin{proof}
	Modify the proof of Lemma \ref{lem2} using the Poincar\'{e} inequality with odd extension for functions vanishing on the flat boundary
	\begin{equation*}
		\int_{B^+_r}\abs{u}^2\, dx\leq Nr^2\int_{B^+_r}\abs{\nabla u}^2\, dx
	\end{equation*}
    to absorb the second term on the right-hand side.
\end{proof}

We recall the following two important lemmas which are useful in handling the estimates for the pressure  $p$.
\begin{lemma}\label{lem_halfspace4}
	Let $\mathcal{D}\subset \mathbb{R}^d$ be a  domain with smooth boundary and $T>0$ be a constant. Let $1<m<+\infty, 1<n<+\infty$ be two fixed integers. Assume that $g\in L_n^tL_m^x(\mathcal{D}_T).$ Then there exists a unique function pair $(v,p)$, which satisfies the following equation:
	\[\begin{cases}
	\partial_t v-\Delta v+\nabla p=g \quad &\text{in } \mathcal{D}_T,\\
	\nabla\cdot v=0 &\text{in } \mathcal{D}_T,\\
	[p]_{\mathcal{D}}(t)=0 & \text{for a.e. }t\in [0,T],\\
	v=0 & \text{on }\partial_p \mathcal{D}_T.
	\end{cases}\]
	Moreover, $v$ and $p$ satisfy the following estimate:
	$$\Vert v\Vert _{W^{1,2}_{n,m}(\mathcal{D}_T)}+\Vert p\Vert _{W^{0,1}_{n,m}(\mathcal{D}_T)}\leq C \Vert g\Vert _{L_n^tL_m^x(\mathcal{D}_T)},$$
	where the constant $C$ only depends on $m,n,T,$ and $\mathcal{D}$.
\end{lemma}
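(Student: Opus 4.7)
The plan is to establish this as a classical maximal $L_p$-regularity result for the Stokes system on a smooth bounded domain, following the approach of Solonnikov, Giga-Sohr, and later authors. The statement is a time-space mixed-norm version of the standard Solonnikov estimate, so the proposal is to reduce it to known semigroup and operator-theoretic results rather than to reprove it from first principles.

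First, I would reduce to an abstract Cauchy problem for the Stokes operator. Let $P$ denote the Helmholtz projection on $L_m(\cD;\bR^d)$ onto $L_m^{\sigma}(\cD):=\{w\in L_m(\cD):\nabla\cdot w=0,\ w\cdot\nu=0\text{ on }\partial\cD\}$, and let $A=-P\Delta$ with domain $D(A)=W_m^2(\cD;\bR^d)\cap W_{m,0}^1(\cD;\bR^d)\cap L_m^{\sigma}(\cD)$. Applying $P$ to the equation for $v$ converts the Stokes system into the abstract problem $\partial_t v+Av=Pg$ on $(0,T)$ with $v(0)=0$. Since $\partial\cD$ is smooth, $-A$ generates a bounded analytic semigroup on $L_m^{\sigma}(\cD)$, and the solution $v$ is given by Duhamel's formula $v(t)=\int_0^t e^{-(t-s)A}Pg(s)\,ds$.

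Next, I would invoke maximal regularity in mixed-norm spaces. The core estimate
\begin{equation*}
\norm{\partial_t v}_{L_n^tL_m^x(\cD_T)}+\norm{A v}_{L_n^tL_m^x(\cD_T)}\le C\norm{Pg}_{L_n^tL_m^x(\cD_T)}
\end{equation*}
is the content of maximal $L_n$-$L_m$ regularity for $A$ on a smooth bounded domain. For $n=m$ this is Solonnikov's classical theorem; the mixed-norm case $n\neq m$ follows from Weis's operator-valued Fourier multiplier theorem together with the $\mathcal{R}$-boundedness of the resolvent $\{\lambda(\lambda+A)^{-1}:\lambda\in\Sigma_\theta\}$ on $L_m^{\sigma}(\cD)$, which is established by Giga-Sohr and their extensions (alternatively, bounded imaginary powers via Dore-Venni). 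Combining this with the boundedness of $P$ on $L_m$ gives the bound for $\partial_t v$ and $\nabla^2 v$ (the latter via elliptic regularity for $-\Delta v=Pg-\partial_t v-\nabla p$ once $p$ is recovered, or directly since $D(A)$ embeds into $W_m^2$).

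Finally, I would recover and estimate the pressure. From the equation, $\nabla p=g-\partial_t v+\Delta v$, so
\begin{equation*}
\norm{\nabla p}_{L_n^tL_m^x(\cD_T)}\le C\bigl(\norm{g}_{L_n^tL_m^x}+\norm{\partial_t v}_{L_n^tL_m^x}+\norm{\nabla^2 v}_{L_n^tL_m^x}\bigr),
\end{equation*}
and the mean-zero normalization $[p]_{\cD}(t)=0$ together with the Poincar\'e inequality on $\cD$ controls $\norm{p}_{L_n^tL_m^x}$ by $\norm{\nabla p}_{L_n^tL_m^x}$. Uniqueness is standard: the difference of two solutions satisfies the homogeneous Stokes system with zero data and zero initial/boundary values, and injectivity of the Stokes semigroup (or a duality argument) forces it to vanish; the pressures then differ by a function of $t$ alone, which is fixed to be zero by the mean-zero condition.

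The main obstacle is the maximal regularity step in the mixed-norm setting, which genuinely requires operator-theoretic machinery (analyticity plus $\mathcal{R}$-boundedness, or bounded imaginary powers) rather than direct PDE computations; this is precisely where it is most efficient to cite Giga-Sohr and Weis rather than reproduce the argument. Everything else---recovering $p$, handling the mean-zero normalization, and the uniqueness---is a routine consequence of the abstract estimate together with standard elliptic theory on smooth bounded domains.
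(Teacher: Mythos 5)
The paper does not prove this lemma at all: it simply cites Maremonti and Solonnikov's work on the nonstationary Stokes problem in anisotropic Sobolev spaces with mixed norms. So the meaningful comparison is between your sketch and the cited source, not an in-paper argument. The Maremonti--Solonnikov route is the classical potential-theoretic one: one works with explicit representation formulas (Oseen/Stokes potentials, Green matrices for the Stokes system on a smooth bounded domain), localizes to straighten the boundary, and derives the mixed-norm parabolic estimates directly from singular-integral bounds on the kernels, tracking the $L_n^t L_m^x$ norm throughout. Your approach replaces this with the modern operator-theoretic machinery: Helmholtz projection to reduce to an abstract Cauchy problem for the Stokes operator $A=-P\Delta$, maximal regularity for $A$ on $L^\sigma_m(\cD)$ with $n\neq m$ obtained from $\mathcal R$-boundedness of the resolvent (Weis) or bounded imaginary powers (Dore--Venni, Giga--Sohr), and then $D(A)\hookrightarrow W_m^2$ together with the recovery of $\nabla p=g-\partial_t v+\Delta v$ and Poincar\'e for the mean-zero normalization. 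Both routes are correct and deliver the same estimate; the potential-theoretic argument is more self-contained and was historically how the mixed-norm estimate was first obtained, while the semigroup route is conceptually cleaner, modular, and makes the passage from $n=m$ to $n\neq m$ essentially automatic once $\mathcal R$-boundedness or BIP is in hand. One small remark: be careful not to present the pressure recovery circularly---you want to get $\nabla^2 v\in L^t_n L^x_m$ from $D(A)\hookrightarrow W_m^2$ (Cattabriga/ADN regularity for the stationary Stokes resolvent problem) and only then read off $\nabla p$ from the equation, rather than using $\nabla p$ to control $\nabla^2 v$; you flag both alternatives, but only the former closes without circularity.
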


\begin{lemma}\label{lem_halfspace5}
	Let $1<m\leq 2, 1<n\leq 2$, and $m\leq s<+\infty$ be constants and $g\in L^t_nL^x_m(Q^+)$. Assume that the functions $v\in W^{0,1}_{n,m}(Q^+)$ and $p\in L_n^tL_m^x(Q^+)$ satisfy the equations:
	\[\begin{cases}
	\partial_tv-\Delta v+\nabla p=g \quad &\text{in } Q^+,\\
	\nabla\cdot v=0 &\text{in }Q^+,
	\end{cases}\]
	and the boundary condition
	$$ v=0, \quad \text{on }\{y\,\vert\, y=(y',0),|y'|<1\}\times[-1,0).$$
	Then, we have $v\in W^{1,2}_{n,s}(Q^+(1/2)),\, p\in W^{0,1}_{n,s}(Q^+(1/2))$, and
	\begin{align*}
		&\Vert v\Vert _{W^{1,2}_{n,s}(Q^+(1/2))}+\Vert p\Vert _{W^{0,1}_{n,s}(Q^+(1/2))}\\
		&\leq C \left(\Vert g\Vert _{L_n^tL_s^x(Q^+)}+\Vert v\Vert _{W^{0,1}_{n,m}(Q^+)}+\Vert p\Vert _{L_n^tL_m^x(Q^+)}\right),
	\end{align*}
	where the constant $C$ only depends on $m,n,$ and $s$.
\end{lemma}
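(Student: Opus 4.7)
The plan is a standard localization-plus-bootstrap scheme for the linear Stokes system, with Lemma~\ref{lem_halfspace4} as the global black box. Fix a decreasing sequence $1 = r_0 > r_1 > \cdots \searrow 1/2$ and smooth tensor-product cutoffs $\eta_k(t,x) = \zeta_k(t)\phi_k(x)$ with $\eta_k \equiv 1$ on $Q^+(r_{k+1})$, $\mathrm{supp}\,\eta_k \subset \overline{Q^+(r_k)}$, allowed to touch the flat wall $\{y_d=0\}$ (where $v \equiv 0$) but vanishing near the rest of $\partial_p Q^+(r_k)$. For each $t$, introduce the Bogovski\u\i{} corrector $w_k(t,\cdot) \in W_0^{1,q}(B^+(r_k))$ solving $\nabla\cdot w_k = v\cdot\nabla\eta_k$; the compatibility $\int_{B^+(r_k)} v\cdot\nabla\eta_k\,dx = 0$ follows from $\nabla\cdot v = 0$ and the boundary values of $v$ and $\eta_k$, and the classical theory gives $\|w_k\|_{W^{1,q}_x} \leq C\|v\,\nabla\eta_k\|_{L^q_x}$ for all $q\in(1,\infty)$.

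Set $V_k := \eta_k v - w_k$ and $P_k := \eta_k(p - [p]_{x_0,r_k})$. Then $(V_k,P_k)$ is divergence-free, vanishes on the parabolic boundary of $Q^+(r_k)$, and solves
\[
\partial_t V_k - \Delta V_k + \nabla P_k = G_k,\qquad \nabla \cdot V_k = 0,
\]
where
\[
G_k := \eta_k g + v\,\partial_t\eta_k - 2\nabla\eta_k\cdot\nabla v - v\,\Delta\eta_k + (p - [p]_{x_0,r_k})\nabla\eta_k - \partial_t w_k + \Delta w_k.
\]
Applying Lemma~\ref{lem_halfspace4} in $Q^+(r_k)$ at the exponent pair $(n,q_k)$ yields
\[
\|V_k\|_{W^{1,2}_{n,q_k}(Q^+(r_k))} + \|P_k\|_{W^{0,1}_{n,q_k}(Q^+(r_k))} \leq C\|G_k\|_{L^t_n L^x_{q_k}(Q^+(r_k))}.
\]
Using the Bogovski\u\i{} bounds and a Poincar\'e inequality to handle $w_k$ and the mean $[p]_{x_0,r_k}$, this becomes an estimate for $(v,p)$ on $Q^+(r_{k+1})$ controlled by $\|g\|_{L^t_n L^x_s(Q^+)}$ together with $\|v\|_{W^{0,1}_{n,q_k}(Q^+(r_k))}$ and $\|p\|_{L^t_n L^x_{q_k}(Q^+(r_k))}$.

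The bootstrap starts at $q_0 = m$. The spatial Sobolev embedding $W^{1,q_k}_x \hookrightarrow L^{q_{k+1}}_x$ with $1/q_{k+1} := (1/q_k - 1/d)_+$ (and the embedding into every $L^q_x$ once $q_k > d$) upgrades the spatial integrability of $v$ at scale $r_{k+1}$, and the $W^{0,1}_{n,q_k}$ control on $p$ upgrades $p$ in the same way. Since $q_k$ strictly increases at a definite rate, it meets or overshoots $s$ in finitely many steps, and one final application of the localized Stokes estimate at exponent $s$ (together with H\"older interpolation if necessary) delivers the claimed bound on $Q^+(1/2)$. The main technical obstacle is estimating $\partial_t w_k$ in $G_k$: the naive expression $\partial_t w_k = \zeta_k'\,B_x(v\cdot\nabla\phi_k) + \zeta_k\,B_x(\partial_t v\cdot\nabla\phi_k)$ calls for $\partial_t v$, which is only distributional. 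The standard remedy exploits $v\cdot\nabla\phi_k = \nabla\cdot(\phi_k v)$ (from $\nabla\cdot v = 0$) and a Bogovski\u\i{}-type representation so that $w_k$ coincides with $\phi_k v$ modulo a divergence-free correction, allowing $\partial_t w_k$ to be replaced, up to controlled terms, by $\partial_t(\phi_k v)$ and handled via the equation. The restriction $1 < m, n \leq 2$ keeps the Bogovski\u\i{} operator, the Sobolev embeddings, and the Calder\'on--Zygmund estimates underlying Lemma~\ref{lem_halfspace4} in their classical $L^p$ range.
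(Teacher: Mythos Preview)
The paper does not prove this lemma at all; immediately after stating Lemmas~\ref{lem_halfspace4} and~\ref{lem_halfspace5} it writes ``We refer the reader to \cite{Refer29} for the proof of Lemma~\ref{lem_halfspace4}, and to \cite{Refer24b, Refer26b} for the proof of Lemma~\ref{lem_halfspace5}.'' So there is no in-paper proof to compare against.

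Your localization-plus-bootstrap outline is exactly the strategy used in the cited Seregin papers \cite{Refer24b,Refer26b}: cut off in a shrinking family of half-cylinders, restore the divergence-free condition with a Bogovski\u\i{} corrector, invoke the global Stokes estimate (Lemma~\ref{lem_halfspace4}) on the localized pair, and climb in spatial integrability via Sobolev embedding. Two small points deserve care. First, Lemma~\ref{lem_halfspace4} is stated for domains with smooth boundary, whereas $B^+(r_k)$ has a corner along $\{x_d=0\}\cap S(r_k)$; the standard fix, used elsewhere in the paper (see the proof of Lemma~\ref{lem_halfspace6}), is to interpose smooth domains $\tilde B_k$ with $B^+(r_{k+1})\subset\tilde B_k\subset B^+(r_k)$ whose boundaries contain the relevant flat piece. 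Second, your handling of $\partial_t w_k$ is the genuinely delicate step, and your sketch of it is a bit loose: the cleanest route in \cite{Refer24b} is to regularize in time (mollify $v,p,g$), run the entire argument for smooth-in-time solutions where $\partial_t w_k = B_x(\partial_t v\cdot\nabla\phi_k)\zeta_k + \ldots$ is honest and can be rewritten via the equation $\partial_t v=\Delta v-\nabla p+g$, and then pass to the limit using that the final estimate involves only norms stable under this approximation. With those two caveats addressed, your argument is correct and matches the literature the paper defers to.
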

 We refer the reader to \cite{Refer29} for the proof of Lemma \ref{lem_halfspace4}, and to \cite{Refer24b, Refer26b} for the proof of Lemma \ref{lem_halfspace5}.

 The following three lemmas are analogous to Lemma \ref{lem7}, \ref{lemAECD}, \ref{lem8}, and the proofs are similar.

\begin{lemma}\label{lem_halfspace7}
	For $\alpha\in[0,1]$, $p\in[2+\frac{4\alpha}{d},d+(4-d)\alpha]$, suppose $r>0$, $\hat{x}\in\partial\Omega$, $\omega(\hat{z},r)=Q^+(\hat{z},r)$, and $u$ satisfies the condition (\ref{eqn_Ldcondition}). Then we have
	\begin{align*}
		\frac{1}{r^{d+2-p}}\int_{Q^+(\hat{z},r)}|u|^p\, dz\leq N\left(A^+(r,\hat{z})+E^+(r,\hat{z})\right)^{\frac{d-p+2\alpha}{d-2}}.
	\end{align*}
In particular, taking $\alpha=1$ and $p = 4-2/d$  we have
$$C^+(r,\hat{z}) \leq N\left(A^+(r,\hat{z})+E^+(r,\hat{z})\right)^{\frac{d-2+2/d}{d-2}}.$$	
\end{lemma}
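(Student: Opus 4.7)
The plan is to mirror the proof of Lemma \ref{lem7} and replace the interior interpolation inequality (Lemma \ref{lem2}) with its boundary counterpart (Lemma \ref{lem_halfspace2}), which is available because $u$ vanishes on the flat part $\{x_d=0\}$ of $\partial B^+(\hat x,r)$ by the Dirichlet condition in Theorem \ref{thm_holder_half}. By the standard scaling $u\mapsto \lambda u(\lambda^2 t,\lambda x)$ I reduce to the case $r=1$, $\hat z=0$.

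First, I would split $|u|^p$ via H\"older's inequality in the spatial variable using the conjugate pair
\[
q_0:=\frac{2d(d-p+2\alpha)}{d^2-dp+4\alpha},\qquad d,
\]
so that one factor is controlled by the critical-norm hypothesis (\ref{eqn_Ldcondition}) and the other sits in the interpolation window for Lemma \ref{lem_halfspace2}. An elementary check shows that the constraint $p\in[2+4\alpha/d,\,d+(4-d)\alpha]$ is exactly what guarantees $q_0\in[2,\,2d/(d-2)]$ and that the second H\"older exponent $(pd-2d-4\alpha)/(d(d-2))$ is non-negative; the endpoint cases $q_0=2$ or $q_0=2d/(d-2)$ are harmless. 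Then applying Lemma \ref{lem_halfspace2} with exponent $q_0$ produces, after matching exponents as in the proof of Lemma \ref{lem7},
\[
\int_{B^+}|u|^p\,dx\leq N\left(\int_{B^+}|\nabla u|^2\,dx\right)^{\alpha}\left(\int_{B^+}|u|^2\,dx\right)^{\frac{(1-\alpha)d+4\alpha-p}{d-2}}.
\]
The key simplification over the interior case is that Lemma \ref{lem_halfspace2} has no additive $\|u\|_{L_2}^q$ term (Poincar\'e absorbs it using $u|_{\{x_d=0\}}=0$), which is what allows the boundary proof to follow the interior template without modification of the right-hand side.

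Second, I would integrate in $t\in(-1,0)$. Pulling the $L^2_x$-factor out of the time integral as an essential supremum contributes $(A^+)^{((1-\alpha)d+4\alpha-p)/(d-2)}$. Applying H\"older in time on the unit interval to $\int_{-1}^0(\int|\nabla u|^2\,dx)^{\alpha}\,dt$ (with $\alpha\in[0,1]$) yields $(E^+)^{\alpha}$. A direct check shows the two exponents sum to $(d-p+2\alpha)/(d-2)$, so the elementary bound $a^{s}b^{t}\leq (a+b)^{s+t}$ for $a,b,s,t\geq 0$ completes the estimate. Specializing to $\alpha=1$ and $p=4-2/d$ gives $C^+(r,\hat z)\leq N(A^++E^+)^{(d-2+2/d)/(d-2)}$.

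No substantial obstacle is expected. The only point that requires care is the exponent bookkeeping verifying that the stated range of $p$ matches the interpolation window for $q_0$, together with the observation that Lemma \ref{lem_halfspace2} (rather than Lemma \ref{lem2}) must be invoked so that the argument closes without an extra lower-order term that would spoil the scale-invariant form of the final bound.
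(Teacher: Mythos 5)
Your proposal is correct and is exactly the argument the paper intends: the text after Lemma \ref{lem_halfspace5} says the boundary lemmas are ``analogous'' to the interior ones with ``similar'' proofs, and your plan of replacing Lemma \ref{lem2} by Lemma \ref{lem_halfspace2} inside the H\"older split of Lemma \ref{lem7}, then integrating in time, is the intended route. Your exponent bookkeeping checks out (the $q_0$ exponent yields $\alpha$ and $\frac{(1-\alpha)d+4\alpha-p}{d-2}$ after composing with the H\"older power, summing to $\frac{d-p+2\alpha}{d-2}$), and your observation that the absence of the additive lower-order term in Lemma \ref{lem_halfspace2} makes the boundary case simpler than the interior one is correct.
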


\begin{lemma}
	\label{lemAECD_half}
	Let $(u,p)$ be a pair of suitable weak solution of (\ref{NS1}). For  $\hat{x}\in \partial \Omega$ and  $\omega(\hat{z},\rho)=Q^+(\hat{z},\rho)$, we have
	\begin{equation}
		\label{eqn_halfspace25}
		A^+(\rho/2)+E^+(\rho/2)\leq N\left(C^+(\rho)^{\frac{d}{2d-1}}+C^+(\rho)^{\frac{3d}{2(2d-1)}}+ C^+(\rho)^{\frac{d}{2(2d-1)}}D^+(\rho)^{\frac{d}{2d-1}}\right),
	\end{equation}
	where $N$ is independent of $\hat{z}$ and $\rho$.
\end{lemma}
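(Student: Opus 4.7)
The plan is to imitate the interior analog Lemma \ref{lemAECD} with the only changes coming from the fact that we now integrate over half-balls, while the flat portion of $\partial B^+(\hat x,1)$ can be absorbed using $u|_{x_d=0}=0$. By the natural scaling (\ref{natural_scaling}) under which $A^{+},E^{+},C^{+},D^{+}$ are all invariant, I first reduce to the case $\rho=1$.

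Next I choose a standard cutoff $\psi\in C_0^\infty$ supported in $(\hat t-1,\hat t+1)\times B(\hat x,1)$ with $\psi\equiv 1$ on $Q(\hat z,1/2)$ and $|\partial_t\psi|+|\nabla\psi|+|\nabla^2\psi|\le N$. Note that $\psi$ does \emph{not} need to vanish on the flat boundary $\{x_d=0\}$: the admissibility requirement in (\ref{eqn_sw_energy}) is only that $\psi$ vanishes near the initial slice. I plug $\psi$ into the local energy inequality (\ref{eqn_sw_energy}) at $t=\hat t$. Because $u$ is divergence-free and $u|_{x_d=0}=0$ while $\psi$ is supported away from the curved part of $\partial B^+(\hat x,1)$, an integration by parts gives
\[
\int_{B^+(\hat x,1)}u\cdot\nabla\psi(t,\cdot)\,dx=0\quad\text{for a.e.\ }t,
\]
so I may replace $p$ by $p-[p]_{\hat x,1}(t)$ in the pressure term on the right-hand side. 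This yields
\[
A^+(1/2)+2E^+(1/2)\le N\!\!\int_{Q^+(\hat z,1)}|u|^2\,dz+N\!\!\int_{Q^+(\hat z,1)}\bigl(|u|^2+2|p-[p]_{\hat x,1}|\bigr)|u|\,dz.
\]

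In the final step I bound each right-hand-side piece by H\"older's inequality using the exponent $2(2d-1)/d$ appearing in $C^{+}$ and $(2d-1)/d$ in $D^{+}$:
\[
\int_{Q^+(\hat z,1)}|u|^2\,dz\le N\,C^+(1)^{\frac{d}{2d-1}},\qquad \int_{Q^+(\hat z,1)}|u|^3\,dz\le N\,C^+(1)^{\frac{3d}{2(2d-1)}},
\]
and, by first H\"older-interpolating $|u|^{(2d-1)/(d-1)}$ into $|u|^{2(2d-1)/d}$ and then pairing with $p-[p]_{\hat x,1}$ at the conjugate exponent $(2d-1)/d$,
\[
\int_{Q^+(\hat z,1)}|p-[p]_{\hat x,1}|\,|u|\,dz\le N\,C^+(1)^{\frac{d}{2(2d-1)}}D^+(1)^{\frac{d}{2d-1}}.
\]
Summing these three estimates gives (\ref{eqn_halfspace25}) at $\rho=1$, and unscaling finishes the proof.

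The only point that might look subtle, and the one I would flag as the main obstacle, is the justification for replacing $p$ by $p-[p]_{\hat x,1}(t)$ in the local energy identity at the boundary; this is what lets $D^+$ (rather than the un-centered pressure norm) appear in the final bound. It is handled by the integration-by-parts argument above, which succeeds only because the flat piece of $\partial B^+$ meets the support of $\psi$ but $u$ vanishes there, while $\psi$ annihilates the curved piece. All other steps are direct H\"older bounds and are uniform in $\hat z$ and $\rho$ after scaling.
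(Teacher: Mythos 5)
Correct, and this matches the paper's intended argument: the paper omits the proof of Lemma \ref{lemAECD_half} as ``analogous'' to Lemma \ref{lemAECD}, and your write-up is exactly that adaptation, with the Hölder exponents reproducing (\ref{eqn_halfspace25}). Your explicit justification that one may replace $p$ by $p-[p]_{\hat x,1}(t)$ --- integration by parts over $B^+(\hat x,1)$ uses $\nabla\cdot u=0$, $\psi=0$ on the curved part $S^+$, and $u\cdot n=0$ on $\{x_d=0\}$ --- correctly fills in a step the paper leaves implicit even in the interior case.
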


\begin{lemma}
	\label{lem2_half}Let $(u,p)$ be a pair of suitable weak solution of (\ref{NS1}) with $d\geq 4$. For constant $\gamma \in (0,1/2]$, $\rho >0$  and $\hat{x}\in\partial\Omega$, $\omega(\hat{z},\rho)=Q^+(\hat{z},\rho)$, we have
	\begin{align}\label{eqn_halfspace2}
			&\quad A^+(\gamma\rho)+E^+(\gamma\rho)\\
			&\leq N\left[\gamma^{2}A^+(\rho)+\gamma^{-d+1}\left((A^+(\rho)+E^+(\rho))^{\frac{d-1}{d-2}}+(A^+(\rho)+E^+(\rho))^{\frac{1}{2}}D^+(\rho)^{\frac{d}{2d-1}}\right)\right].\nonumber
	\end{align}
\end{lemma}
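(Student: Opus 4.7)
The plan is to mirror the interior proof of Lemma \ref{lem8} almost verbatim, with the full balls and cylinders replaced by their upper-half counterparts and with the boundary interpolation Lemma \ref{lem_halfspace7} used in place of Lemma \ref{lem7}. Since $\hat x\in\partial\Omega$ has $\hat x_d=0$, we have $\Omega(\hat x,r)=B^+(\hat x,r)$ and $\omega(\hat z,r)=Q^+(\hat z,r)$, and the Dirichlet condition $u|_{x_d=0}=0$ is precisely what powers the half-space Sobolev estimate behind Lemma \ref{lem_halfspace2} (and hence Lemma \ref{lem_halfspace7}).

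First I would rescale to $\rho=1$ using the scaling invariance (\ref{natural_scaling}). Then, as in the interior proof, I would introduce the backward heat kernel
$$\Gamma(t,x)=\frac{1}{(4\pi(\gamma^2+\hat t-t))^{d/2}}e^{-\frac{|x-\hat x|^2}{2(\gamma^2+\hat t-t)}},$$
together with a smooth cutoff $\phi\in C^{\infty}_0((\hat t-1,\hat t+1)\times B(\hat x,1))$ satisfying $0\leq\phi\leq 1$, $\phi\equiv 1$ on $Q(\hat z,1/2)$, and the standard derivative bounds. I would then substitute $\psi=\Gamma\phi$ into the local energy inequality (\ref{eqn_sw_energy}); this is admissible because item (iii) of the definition of suitable weak solution only asks $\psi$ to vanish near $\{t=0\}\times\Omega$ and places no constraint on the flat lateral boundary $\{x_d=0\}$. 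Using $\Delta\Gamma+\Gamma_t=0$ together with the same three pointwise estimates for $\Gamma\phi$ that appear in the interior proof (and which hold equally on $Q^+(\hat z,1)$), the energy inequality collapses to
$$A^+(\gamma)+E^+(\gamma)\leq N\left[\gamma^{2}\int_{Q^+(\hat z,1)}|u|^{2}\,dz+\gamma^{-d+1}\int_{Q^+(\hat z,1)}(|u|^{2}+2|p-[p]_{\hat x,1}|)|u|\,dz\right].$$

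To close the estimate I would bound the two integrals by the scale invariant quantities. The first is trivially at most $NA^+(1)$. For the cubic term I would invoke Lemma \ref{lem_halfspace7} with $\alpha=1$ and $p=3$ (admissible since $d\geq 4$ gives $3\in[2+4/d,4]$), yielding $\int_{Q^+(\hat z,1)}|u|^{3}\,dz\leq N(A^+(1)+E^+(1))^{(d-1)/(d-2)}$. For the pressure--velocity term I would first apply Lemma \ref{lem_halfspace7} with $\alpha=d/(4(d-1))$ and $p=(2d-1)/(d-1)$ to obtain $\int_{Q^+(\hat z,1)}|u|^{(2d-1)/(d-1)}\,dz\leq N(A^+(1)+E^+(1))^{(2d-1)/(2(d-1))}$, and then combine with H\"older's inequality in space-time to get
$$\int_{Q^+(\hat z,1)}|p-[p]_{\hat x,1}||u|\,dz\leq N(A^+(1)+E^+(1))^{1/2}D^+(1)^{d/(2d-1)}.$$
Plugging these bounds back produces the desired inequality (\ref{eqn_halfspace2}). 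I do not anticipate any genuine obstacle; the only points meriting attention are the admissibility of $\psi=\Gamma\phi$ near the flat boundary (addressed above) and the verification that both choices of $(\alpha,p)$ satisfy the range condition $p\in[2+4\alpha/d,d+(4-d)\alpha]$ required by Lemma \ref{lem_halfspace7}, which is a routine algebraic check.
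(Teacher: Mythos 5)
Your proposal is correct and is exactly the argument the paper has in mind: the paper explicitly declines to reprove Lemma \ref{lem2_half}, stating only that it is ``analogous to Lemma \ref{lem8}'' with ``similar'' proof, and you have carried out that adaptation faithfully (replacing full balls and cylinders by half-balls and half-cylinders, using Lemma \ref{lem_halfspace7} in place of Lemma \ref{lem7}, and checking that the admissibility of $\psi=\Gamma\phi$ and the vanishing of the boundary term $\int_{\partial B^+}(u\cdot n)\psi\,dS$ are guaranteed by the Dirichlet condition and the compact support of $\phi$). The exponent choices $(\alpha,p)=(1,3)$ and $(\alpha,p)=(d/(4(d-1)),(2d-1)/(d-1))$ both fall in the required range for $d\geq 4$, and the resulting powers $(d-1)/(d-2)$, $1/2$, and $d/(2d-1)$ match the statement.
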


At last, we present an estimate for quantity $D^+(\rho)$, which is essentially different from Lemma \ref{lem_CD} for the interior case.

\begin{lemma}\label{lem_halfspace6}
	Let $(u,p)$ be a pair of suitable weak solution of (\ref{NS1}) and $u$ satisfies condition (\ref{eqn_Ldcondition}). Let $\gamma\in(0,1/4]$ and $\rho>0$ be constants. Suppose that $\hat{x}\in \partial \Omega$ and $\omega(\hat{z},\rho)=Q^+(\hat{z},\rho)$. Then given any small $\delta_2>0$,  we have
	\begin{align}\label{eqn_halfspace20}
		\begin{split}
			\quad D^+(\gamma\rho)\leq &N\left[ \gamma^{-d-2/d+2}(A^+(\rho)+E^+(\rho))^{1+\frac{2}{d(d-2)}}\right.\\
			&+\left.\gamma^{4-3/d-\delta_2}\left(D^+(\rho)+A^+(\rho)^{1-\frac{1}{2d}}+E^+(\rho)^{1-\frac{1}{2d}}\right)\right],
		\end{split}
	\end{align}
	where $N$ is a constant independent of $\gamma,\rho,$ and $\hat{z}$, but may depend on $\delta_2$.
\end{lemma}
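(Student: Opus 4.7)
The plan is to split the pressure $p$ on $Q^+(\rho)$ into a ``source part'' $p_1$ coming from the nonlinear term and a ``regular part'' $p_2=p-p_1$ satisfying a homogeneous Stokes system locally, then bound their contributions to $D^+(\gamma\rho)$ separately. By the natural scaling invariance I may and do assume $\rho=1$. Let $\eta$ be a smooth spatial cutoff with $\eta=1$ on $B^+(3/4)$ and $\operatorname{supp}\eta\Subset B^+$. I would consider the auxiliary linear Stokes problem on $Q^+$ with source $-\operatorname{div}(u\otimes u\,\eta)$, zero Dirichlet data on the lateral and flat parts of the parabolic boundary, and normalized pressure; its solvability with the sharp estimate is provided by (the divergence-form variant of) Lemma \ref{lem_halfspace4}. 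Set $w:=u-v$ and $p_2:=p-p_1$; then on $Q^+(3/4)$ the pair $(w,p_2)$ satisfies the homogeneous Stokes system with $w|_{x_d=0}=0$, which makes Lemma \ref{lem_halfspace5} applicable to bootstrap the spatial integrability of $\nabla p_2$ in a smaller half-cylinder.

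For the $p_1$ contribution, the Stokes estimate (applied at exponent $m=n=2-1/d$) yields
\[
\|p_1\|_{L_{2-1/d}(Q^+)}^{2-1/d}\leq N\|u\otimes u\|_{L_{2-1/d}(Q^+)}^{2-1/d}=N\int_{Q^+}|u|^{4-2/d}\,dz=N\,C^+(1).
\]
By Lemma \ref{lem_halfspace7} with $\alpha=1$ and $p=4-2/d$, $C^+(1)\leq N(A^+(1)+E^+(1))^{1+2/(d(d-2))}$. Since $|[p_1]_{x_0,\gamma}(t)|$ in $L_{2-1/d}$ is controlled by $\|p_1\|_{L_{2-1/d}}$, dividing by $\gamma^{d+2/d-2}$ produces exactly the first term of \eqref{eqn_halfspace20}.

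For the $p_2$ contribution, I apply Lemma \ref{lem_halfspace5} with $n=m=2-1/d$ and a large parameter $s=s(\delta_2)$ chosen so that $d(2-1/d)/s<\delta_2$, giving
\[
\|\nabla p_2\|_{L^t_{2-1/d}L^x_s(Q^+(1/2))}\leq N\bigl(\|w\|_{W^{0,1}_{2-1/d,2-1/d}(Q^+)}+\|p_2\|_{L_{2-1/d}(Q^+)}\bigr).
\]
Using H\"older in space on the bounded half-cylinder to pass from $L_2$-norms of $u$ and $\nabla u$ to $L_{2-1/d}$-norms, I bound $\|u\|^{2-1/d}_{W^{0,1}_{2-1/d,2-1/d}(Q^+)}$ by $N(A^+(1)^{1-1/(2d)}+E^+(1)^{1-1/(2d)})$ (noting $(2-1/d)/2=1-1/(2d)$). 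The analogous bound for $v$ follows from Lemma \ref{lem_halfspace4}, while the $p$-piece gives $D^+(1)$ and the $p_1$-piece is already controlled by the first term. Then for $\gamma\in(0,1/4]$ I apply the spatial Poincar\'e inequality on each time slice to $p_2-[p_2]_{x_0,\gamma}(t)$ and H\"older in space to trade $L_s$ for $L_{2-1/d}$, producing a factor $\gamma^{2-1/d}\cdot\gamma^{d(1-(2-1/d)/s)}$. After dividing by $\gamma^{d+2/d-2}$, the exponent collapses to $4-3/d-d(2-1/d)/s\geq 4-3/d-\delta_2$, yielding the second term of \eqref{eqn_halfspace20}.

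The main obstacle is tracking the exponents cleanly through the bootstrap in Lemma \ref{lem_halfspace5} while arranging the Stokes decomposition so that (i) the source is in $L_{2-1/d}$ (which forces the divergence-form variant since $u\cdot\nabla u$ itself is not comfortably in $L_{2-1/d}$ under only the $L^t_{\infty}L^x_d$ and energy bounds) and (ii) the auxiliary norms of $w$ and $p_2$ on the right-hand side of Lemma \ref{lem_halfspace5} are precisely those producing the exponents $1+2/(d(d-2))$ and $1-1/(2d)$ of the target inequality. A minor care is required to handle the normalization of the pressure (passing from $p_1$ to $p_1-[p_1]_{x_0,\gamma}(t)$) and the time-dependent mean $[p_2]_{x_0,\gamma}(t)$, both of which only improve the inequality via the triangle inequality.
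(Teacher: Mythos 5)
Your overall strategy — split $p$ into a ``Newtonian'' piece $p_1$ driven by the nonlinearity and a residual $p_2$ solving a homogeneous Stokes system near the boundary, then bound the $p_1$-contribution crudely and the $p_2$-contribution by bootstrapping via Lemma \ref{lem_halfspace5} plus H\"older in space — is the same overall architecture the paper uses, and the bookkeeping of the exponents $\gamma^{-d-2/d+2}$ and $\gamma^{4-3/d-\delta_2}$ agrees with the paper's. However, there is a genuine gap at the crucial point.

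Your estimate $\norm{p_1}_{L_{2-1/d}(Q^+)}^{2-1/d}\leq N\norm{u\otimes u}_{L_{2-1/d}(Q^+)}^{2-1/d}$ relies on a ``divergence-form variant of Lemma \ref{lem_halfspace4}'' which the paper neither states nor proves, and which cannot be read off from Lemma \ref{lem_halfspace4} as quoted (that lemma is stated for a right-hand side $g\in L_n^tL_m^x$, giving maximal regularity $\partial_t v,\ \nabla^2 v,\ \nabla p\in L^t_nL^x_m$, not a pressure estimate from a source in divergence form). In the interior case the analogous step \emph{is} legitimate, because in Lemma \ref{lem_CD} the local ``Newtonian'' pressure is the genuine Newtonian potential of $\partial_i\partial_j(u_iu_j\eta)$ in $\R^d$ and the Calder\'on--Zygmund estimate gives $\|p_{x_0,\rho}\|_{L_q}\lesssim\||u|^2\|_{L_q}$ directly. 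Near a boundary this breaks down: the pressure in the half-space Stokes problem picks up a boundary term involving normal derivatives, and the $L_q$-boundedness of $p$ in terms of $\|F\|_{L_q}$ for $g=\operatorname{div}F$ is a delicate statement that requires a separate argument. One cannot simply write $\operatorname{div}(u\otimes u\eta)=\eta\,u\cdot\nabla u+(u\otimes u)\cdot\nabla\eta$ and apply Lemma \ref{lem_halfspace4} to the first term, because H\"older with $\nabla u\in L_2$ and $u\in L^t_\infty L^x_d$ only puts $u\cdot\nabla u$ in $L^x_{q}$ with $q<2-1/d$, so $u\cdot\nabla u\notin L_{2-1/d}$ in general under your hypotheses.

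This is precisely the difficulty the paper works around. Its proof keeps the source in the non-divergence form $u\cdot\nabla u$ and, using Lemma \ref{lem_halfspace2} together with the $L^t_\infty L^x_d$ bound, shows the sub-critical mixed-norm bound $u\cdot\nabla u\in L^t_{(2d-1)/d}L^x_{d(2d-1)/(d^2+2d-1)}$ -- see (\ref{eqn_halfspace21}). That exponent is chosen so that the spatial Sobolev--Poincar\'e embedding raises $L^x_{d(2d-1)/(d^2+2d-1)}$ of $\nabla p$ to $L^x_{(2d-1)/d}=L^x_{2-1/d}$ of $p-[p]$, with no loss. Lemma \ref{lem_halfspace4} can then be applied as stated (with the full maximal regularity), producing the bound (\ref{eqn_halfspace22}), and the final step uses Sobolev--Poincar\'e at the level of $\nabla p$ rather than a direct pressure bound. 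To repair your proposal you would need either to prove the divergence-form Stokes estimate in $L_{2-1/d}$ near the flat boundary, or adopt the paper's route of working with $u\cdot\nabla u$ at the critical mixed exponent and controlling $\nabla p_1$ rather than $p_1$; the latter is what the statement's specific second exponent $1-\frac{1}{2d}$ and first exponent $1+\frac{2}{d(d-2)}$ are designed for.
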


\begin{proof}
	Without loss of generality, by shifting the coordinate we may assume that $\hat{z}=(0,0).$ By the scale-invariant property, we may also assume $\rho=1$. We  fix a domain $\tilde{B}\subset \mathbb{R}^d$ with smooth boundary so that
	$$B^+(1/2)\subset \tilde{B}\subset B^+,$$
	and denote $\tilde{Q}=\tilde{B}\times(-1,0)$. Using H\"{o}lder's inequality, Lemma \ref{lem_halfspace2} with $q=\frac{2d(d+2)}{d^2+4}$, and (\ref{eqn_Ldcondition}), we get
	\begin{align}\label{eqn_halfspace21}
		\begin{split}
			&\left(\int_{B^+}|u\cdot\nabla u|^{\frac{d(2d-1)}{d^2+2d-1}}\, dx\right)^{\frac{d^2+2d-1}{d^2}}\\
			&\leq \left(\int_{B^+}\abs{\nabla u}^2\, dx\right)^{1-\frac{1}{2d}}\left(\int_{B^+}\abs{u}^{\frac{2d(d+2)}{d^2+4}}\, dx\right)^{\frac{d^2+4}{2d^2(d-2)}}\left(\int_{B^+}\abs{u}^d\, dx \right)^{\frac{2(d-3)}{d(d-2)}}\\
				&\leq \left(\int_{B^+}\abs{\nabla u}^2\, dx\right)\left(\int_{B^+}\abs{u}^2\, dx\right)^{\frac{2}{d(d-2) }}\left(\int_{B^+}\abs{u}^d\, dx \right)^{\frac{2(d-3)}{d(d-2)}}\\
			&\leq N\left(\int_{B^+}|\nabla u|^2 \, dx\right)\left(\int_{B^+}|u|^2\, dx\right)^{\frac{2}{d(d-2)}}.
		\end{split}
	\end{align}
Integrating in $t$, we have $u\cdot \nabla u\in  W^{1,2}_{\frac{2d-1}{d},\frac{d(2d-1)}{d^2+2d-1}}(B^+)$.
	By Lemma \ref{lem_halfspace4}, there is a unique solution
	$$v\in W^{1,2}_{\frac{2d-1}{d},\frac{d(2d-1)}{d^2+2d-1}}(\tilde{Q}) \quad \text{and}\quad p_1\in W^{0,1}_{\frac{2d-1}{d},\frac{d(2d-1)}{d^2+2d-1}}(\tilde{Q})$$
	to the following initial boundary value problem:
	\[\begin{cases}
	\partial_t v-\Delta v+\nabla p_1=-u\cdot \nabla u \quad &\text{in } \tilde{Q},\\
	\nabla\cdot v=0 &\text{in } \tilde{Q},\\
	[p_1]_{\tilde{B}}(t)=0 & \text{for a.e. }t\in [0,T],\\
	v=0 & \text{on }\partial_p\tilde{Q}.
	\end{cases}\]
	Moreover, we have
	\begin{align}\label{eqn_halfspace22}
		\begin{split}
			&\Vert v \Vert_{L^t_{\frac{2d-1}{d}}L_{\frac{d(2d-1)}{d^2+2d-1}}^x (\tilde{Q})}+\Vert\nabla v \Vert_{L^t_{\frac{2d-1}{d}}L_{\frac{d(2d-1)}{d^2+2d-1}}^x (\tilde{Q})}\\
			&\quad +\Vert p_1 \Vert_{L^t_{\frac{2d-1}{d}}L_{\frac{d(2d-1)}{d^2+2d-1}}^x (\tilde{Q})}+\Vert \nabla p_1 \Vert_{L^t_{\frac{2d-1}{d}}L_{\frac{d(2d-1)}{d^2+2d-1}}^x (\tilde{Q})}\\
&\le N\|u\cdot\nabla u\|_{L^t_{\frac{2d-1}{d}}L_{\frac{d(2d-1)}{d^2+2d-1}}^x (\tilde{Q})}\\
			&\leq N\left(\int_{-1}^0\left(\int_{B^+}|\nabla u|^2 \, dx\right)\left(\int_{B^+}|u|^2\, dx\right)^{\frac{2}{d(d-2)}}\,dt\right)^{d/(2d-1 )},
		\end{split}
	\end{align}
	where in the last inequality we used (\ref{eqn_halfspace21}).
	
	We set $w=u-v$ and $p_2=p-p_1-[p]_{0,1/2}$. Then $w$ and $p_2$ satisfy
	\[\begin{cases}
	\partial_t w-\Delta w+\nabla p_2=0 \quad &\text{in } \tilde{Q},\\
	\nabla\cdot w=0 & \text{in } \tilde{Q},\\
	w=0 & \text{on } [-1,0) \times \{\partial\tilde{B}\cap\partial\Omega\}.\end{cases}\]
	By Lemma \ref{lem_halfspace5} and the triangle inequality, fixing some $s>0$ large enough to be specified later, we have $p_2\in W^{0,1}_{\frac{2d-1}{d},s}(Q^+(1/4))$ and
	\begin{align*}
		&\Vert \nabla p_2\Vert_{L^t_{\frac{2d-1}{d}}L_{s}^x(Q^+(1/4))}\\
		&\leq N\left[\Vert w\Vert_{L^t_{\frac{2d-1}{d}}L_{\frac{d(2d-1)}{d^2+2d-1}}^x(Q^+(1/2))}+\Vert \nabla w\Vert_{L^t_{\frac{2d-1}{d}}L_{\frac{d(2d-1)}{d^2+2d-1}}^x(Q^+(1/2))}\right.\\
		&\quad \left.+\Vert p_2\Vert_{L^t_{\frac{2d-1}{d}}L_{\frac{d(2d-1)}{d^2+2d-1}}^x(Q^+(1/2))}\right]\\
		&\leq N\left[ \Vert u\Vert_{L^t_{\frac{2d-1}{d}}L_{\frac{d(2d-1)}{d^2+2d-1}}^x(Q^+(1/2))}+\Vert \nabla u\Vert_{L^t_{\frac{2d-1}{d}}L_{\frac{d(2d-1)}{d^2+2d-1}}^x(Q^+(1/2))}\right.\\& \quad +\Vert p-[p]_{0,1/2}\Vert_{L^t_{\frac{2d-1}{d}}L_{\frac{d(2d-1)}{d^2+2d-1}}^x(Q^+(1/2))}+\Vert v\Vert_{L^t_{\frac{2d-1}{d}}L_{\frac{d(2d-1)}{d^2+2d-1}}^x(Q^+(1/2))}\\
		& \quad +\left. \Vert \nabla v\Vert_{L^t_{\frac{2d-1}{d}}L_{\frac{d(2d-1)}{d^2+2d-1}}^x(Q^+(1/2))}+\Vert p_1\Vert_{L^t_{\frac{2d-1}{d}}L_{\frac{d(2d-1)}{d^2+2d-1}}^x(Q^+(1/2))}\right].
	\end{align*}
	Together with (\ref{eqn_halfspace22}), we obtain
	\begin{align}\label{eqn_halfspace23}
		\begin{split}
			&\Vert \nabla p_2\Vert_{L^t_{\frac{2d-1}{d}}L_{s}^x(Q^+(1/4))}\\
			&\leq N\left[ \Vert u\Vert_{L^t_{\frac{2d-1}{d}}L_{\frac{d(2d-1)}{d^2+2d-1}}^x(Q^+(1/2))}+\Vert \nabla u\Vert_{L^t_{\frac{2d-1}{d}}L_{\frac{d(2d-1)}{d^2+2d-1}}^x(Q^+(1/2))}\right.\\
			&\left.\quad+\Vert p-[p]_{0,1/2}\Vert_{L^t_{\frac{2d-1}{d}}L_{\frac{d(2d-1)}{d^2+2d-1}}^x(Q^+(1/2))}\right. \\
			&\left.\quad +\left(\int_{-1}^0\left(\int_{B^+}|\nabla u|^2 \, dx\right)\left(\int_{B^+}|u|^2\, dx\right)^{\frac{2}{d(d-2)}}\,dt\right)^{d/(2d-1)}\right].
		\end{split}
	\end{align}
	Recall that $0<\gamma\leq 1/4$. Then by using the Sobolev-Poincar\'{e} inequality, the triangle inequality, (\ref{eqn_halfspace22}), (\ref{eqn_halfspace23}), and H\"{o}lder's inequality, we bound $D^+(\gamma)$ by
	\begin{align*}		
&\frac{N}{\gamma^{d+2/d-2}}\int_{-\gamma^2}^0\left(\int_{B^+(\gamma)}|\nabla p|^{\frac{d(2d-1)}{d^2+2d-1}}\,dx\right)^{\frac{d^2+2d-1}{d^2}}dt\\
&\le \frac{N}{\gamma^{d+2/d-2}}\int_{-\gamma^2}^0\left(\left(\int_{B^+(\gamma)}|\nabla p_1|^{\frac{d(2d-1)}{d^2+2d-1}}\,dx\right)^{\frac{d^2+2d-1}{d^2}}+\left(\int_{B^+(\gamma)}|\nabla p_2|^{\frac{d(2d-1)}{d^2+2d-1}}\,dx\right)^{\frac{d^2+2d-1}{d^2}}\right)dt\\
		& \leq N\gamma^{-d-2/d+2}E^+(1)A^+(1)^{\frac{2}{d(d-2)}}+N\gamma^{4-3/d-(2d-1)/s}\int_{-\gamma^2}^0\left(\int_{B^+(\gamma)}|\nabla p_2|^{s}\,dx\right)^{\frac{2d-1}{sd}} dt\\
		&\leq N\left[\gamma^{-d-2/d+2}E^+(1)A^+(1)^{\frac{2}{d(d-2)}}\right.\\
		&\quad\left.+\gamma^{4-3/d-(2d-1)/s}\left(D^+(1) +A^+(1)^{1-\frac{1}{2d}}+E^+(1)^{1-\frac{1}{2d}}\right)\right].
	\end{align*}
	By making $s$ large such that $\frac{2d-1}{s}<\delta_2$, we finish the proof.
\end{proof}
\begin{corollary}
	\label{col6_half}
Let $(u,p)$ be a pair of suitable weak solution of (\ref{NS1}) with $d\geq 4$. Suppose there exist  $\hat{x}\in \partial \Omega$,  $C_1>0$, and $\rho_0>0$ such that $\omega(\hat{z},\rho)=Q^+(\hat{z},\rho)$, $C^+(\rho,\hat{z})\leq C_1$ for all $\rho\in(0,\rho_0]$ and $D^+(\rho_{0},\hat{z})\leq C_1$. Then we can find $C:=C(C_1)>0$ such that $D^+(\rho,z^{\ast})\leq C$ for all $z^{\ast}\in Q(\hat{z},\rho_0/2)\cap\{x_d=0\}$ and $\rho\in(0,\rho_0/4)$.
\end{corollary}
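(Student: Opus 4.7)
The plan is to mirror the strategy of Corollary~\ref{col_CD}: use containment of parabolic cylinders for the base case, then iterate a decay inequality to bound $D^+$ at all shrinking scales around a shifted center $z^*$. The structural difference from the interior case is that the boundary pressure estimate of Lemma~\ref{lem_halfspace6} contains not only $D^+$ and $C^+$ at the parent scale, but also $(A^++E^+)$ with a superlinear exponent $1+\tfrac{2}{d(d-2)}$. These extra terms are handled by coupling Lemma~\ref{lem_halfspace6} with Lemma~\ref{lemAECD_half} and Lemma~\ref{lem_halfspace7}, together with the uniform bound $A^+(\rho,z)\le NK^2$ that follows from \eqref{eqn_Ldcondition} and H\"older's inequality $\int_{B^+(x,\rho)}|u|^2\,dx\le (\int |u|^d\,dx)^{2/d}|B^+|^{1-2/d}$.

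For the base case, I would observe that $Q^+(z^*,\rho_0/2)\subset Q^+(\hat z,\rho_0)$ for any $z^*\in Q(\hat z,\rho_0/2)\cap\{x_d=0\}$; combined with the $L^p$-mean optimality $\int|p-[p]_r|^q\le 2^q\int|p-c|^q$ applied with $c(t)=[p]_{\hat x,\rho_0}(t)$, this yields $D^+(\rho_0/2,z^*)\le NC_1$ and $C^+(\rho_0/2,z^*)\le NC_1$. Next, fix a parameter $\gamma\in(0,1/4]$ to be chosen small, set $\rho_k=\gamma^k\rho_0/4$, and iterate. At scale $\rho_k$, Lemma~\ref{lem_halfspace6} gives
\[
D^+(\rho_{k+1},z^*) \le N\gamma^{-d-2/d+2}(A^+(\rho_k)+E^+(\rho_k))^{1+2/(d(d-2))} + N\gamma^{4-3/d-\delta_2}\bigl(D^+(\rho_k) + (A^+(\rho_k)+E^+(\rho_k))^{1-1/(2d)}\bigr).
\]
The $(A^++E^+)$-factor on the right is controlled by applying Lemma~\ref{lemAECD_half} at the doubled scale $2\rho_k$, using Lemma~\ref{lem_halfspace7} to bound the resulting $C^+(2\rho_k)$ by $N(A^++E^+)^{(d-2+2/d)/(d-2)}$ at that scale, and invoking the uniform bound $A^+\le NK^2$; an $L^p$-mean transfer controls $D^+(2\rho_k)$ by $D^+(\rho_{k-1})$. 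The upshot is an estimate of the form $A^+(\rho_k)+E^+(\rho_k)\le N(1+D^+(\rho_{k-1})^{d/(2d-1)})$ with constant depending on $C_1$ and $K$. Plugging back and noting that the composite exponents $\tfrac{d}{2d-1}(1+\tfrac{2}{d(d-2)})=\tfrac{d^2-2d+2}{(2d-1)(d-2)}$ and $\tfrac{d}{2d-1}(1-\tfrac{1}{2d})=\tfrac12$ on $D^+$ are strictly less than one for $d\ge 4$, Young's inequality absorbs the $D^+$-terms into small multiples plus constants. Choosing $\gamma$ small so that $N\gamma^{4-3/d-\delta_2}$ and the absorbed coefficients are each $\le \tfrac14$ yields a closed two-term recurrence $D^+(\rho_{k+1},z^*)\le \tfrac12\max(D^+(\rho_k,z^*),D^+(\rho_{k-1},z^*)) + N'$, which iterates to a uniform bound $D^+(\rho_k,z^*)\le 2N' + NC_1$. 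For general $\rho\in(0,\rho_0/4]$, locate $k$ with $\rho\in[\rho_{k+1},\rho_k]$ and use $D^+(\rho,z^*)\le N\gamma^{-(d+2/d-2)}D^+(\rho_k,z^*)$ to conclude.

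The main obstacle is the algebraic scale mismatch between Lemma~\ref{lem_halfspace6} (ratio $\gamma$) and Lemma~\ref{lemAECD_half} (ratio $1/2$), and the fact that the pressure estimate carries the superlinear exponent $1+\tfrac{2}{d(d-2)}$ instead of the plain $C^+$ that appears in the interior Lemma~\ref{lem_CD}. The first difficulty is resolved by $L^p$-mean comparison for $D^+$, which transfers bounds across non-matching scales at the cost of a fixed multiplicative constant. The second is defused by the key dimensional computation above: after composition with the sub-linear exponent $d/(2d-1)$ from Lemma~\ref{lemAECD_half} and the uniform H\"older bound $A^+\le NK^2$, the net exponent on $D^+$ remains strictly below one for all $d\ge 4$, preserving the geometric structure of the iteration.
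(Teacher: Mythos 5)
Your proposal follows the same overall architecture as the paper's proof of this corollary: start from the containment base case $D^+(\rho_0/2,z^*)\le N D^+(\rho_0,\hat z)$, iterate Lemma~\ref{lem_halfspace6} with $\delta_2=1$ to relate $D^+$ across nested scales, control the $(A^++E^+)$ factor through Lemma~\ref{lemAECD_half}, exploit that the composite exponents $\frac{d^2-2d+2}{(2d-1)(d-2)}$ and $\frac12$ on $D^+$ are strictly below one for $d\ge4$, absorb via Young's inequality, and choose $\gamma$ small so the recurrence becomes contractive. The key arithmetic identities $\tfrac{d}{2d-1}(1+\tfrac{2}{d(d-2)})=\tfrac{d^2-2d+2}{(2d-1)(d-2)}$ and $\tfrac{d}{2d-1}(1-\tfrac{1}{2d})=\tfrac12$ that you flag are exactly the ones the paper uses.

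The gap is in the sub-step bounding $C^+(2\rho_k,z^*)$. You propose to invoke Lemma~\ref{lem_halfspace7}, which bounds $C^+$ by $N(A^++E^+)^{(d-2+2/d)/(d-2)}$, together with the uniform bound $A^+\le NK^2$. But Lemma~\ref{lem_halfspace7} carries $E^+$ on the right-hand side, and $E^+$ is precisely the quantity you are trying to control via Lemma~\ref{lemAECD_half} at the same step; the loop is circular. Tracking the resulting cascade confirms it does not close: the largest power of $C^+$ appearing in Lemma~\ref{lemAECD_half} is $\tfrac{3d}{2(2d-1)}$, and composed with the exponent from Lemma~\ref{lem_halfspace7} this gives $\tfrac{d-2+2/d}{d-2}\cdot\tfrac{3d}{2(2d-1)}=\tfrac{3(d^2-2d+2)}{2(d-2)(2d-1)}$, which for $d=4$ equals $\tfrac{30}{28}>1$, so $E^+$ feeds back superlinearly. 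The paper avoids this entirely by inserting the hypothesis $C^+(\cdot,\hat z)\le C_1$ \emph{directly} into Lemma~\ref{lemAECD_half}, producing $A^+(\rho/4)+E^+(\rho/4)\le N(C_1)(1+D^+(\rho/2)^{d/(2d-1)})$ without ever invoking Lemma~\ref{lem_halfspace7}. If instead your intent was to lean on (\ref{eqn_Ldcondition}), the correct tool is the elementary H\"older computation as in (\ref{eqn56}), which gives $C^+\le N\|u\|_{L^t_\infty L^x_d}^{2(2d-1)/d}$ with no $E^+$ dependence. With that one step corrected, your two-term recurrence, Young's absorption, and final scale-covering argument all go through as you describe.
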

\begin{proof}
	Because $Q^+(z^{\ast},\rho_0/2)\subset Q^+(\hat{z},\rho_0)$ for $z^{\ast}\in Q(\hat{z},\rho_0/2)\cap\{x_d=0\}$, we have $D^+(\rho_0/2,z^{\ast})\leq N D^+(\rho_0,\hat{z})$.
	From (\ref{eqn_halfspace25}) and (\ref{eqn_halfspace20}) with $\delta_2=1$, we can get an estimation for $D^+$.
	\begin{align}
		\begin{split}\label{eqn80}
			D^+\left(\frac{\gamma\rho}{4},z^{\ast}\right)\leq& N\left[\gamma^{-d-2/d+2}\left(A^+\left(\frac{\rho}{4},z^{\ast}\right)+E^+\left(\frac{\rho}{4},z^{\ast}\right)\right)^{1+\frac{2}{d(d-2)}}\right.\\
			&\left. +\gamma^{3-3/d}\left(D^+\left(\frac{\rho}{4},z^{\ast}\right)+A^+\left(\frac{\rho}{4},z^{\ast}\right)^{1-\frac{1}{2d}}+E^+\left(\frac{\rho}{4},z^{\ast}\right)^{1-\frac{1}{2d}}\right)\right]\\
			\leq&N(C_1)\left[\gamma^{-d-2/d+2}\left(1+D^+\left(\frac{\rho}{2},z^{\ast}\right)^{\frac{d}{2d-1}}\right)^{1+\frac{2}{d(d-2)}}\right.\\
			&\left.+\gamma^{3-3/d}\left(D^+\left(\frac{\rho}{4},z^{\ast}\right)+\left(1+D^+\left(\frac{\rho}{2},z^{\ast}\right)^{\frac{d}{2d-1}}\right)^{1-\frac{1}{2d}}\right)\right]\\
			\leq&N(C_1)\left[\gamma^{-d-2/d+2}\left(1+D^+\left(\frac{\rho}{2},z^{\ast}\right)^{\frac{d^2-2d+2}{2d^2-5d+2}}\right)\right.\\
			&+\left. \gamma^{3-3/d}\left(D^+\left(\frac{\rho}{2},z^{\ast}\right)+D^+\left(\frac{\rho}{2},z^{\ast}\right)^{1/2}+1\right)\right].
		\end{split}
	\end{align}
By Young's inequality, we have
$$D^+(\rho/2,z^{\ast})^{1/2}\leq \frac{1}{2}D^+(\rho/2,z^{\ast})+\frac{1}{2}.$$
For any  $\epsilon>0$ and  $\delta: = 1-\frac{d^2-2d+2}{2d^2-5d+2}$, which is positive when $d\geq 4$. Again by Young's inequality we get
\begin{align*}
	D^+(\rho/2,z^{\ast})^{1-\delta}&=(\epsilon D^+(\rho/2,z^{\ast}))^{1-\delta}\frac{1}{\epsilon^{1-\delta}}  \leq (1-\delta)\epsilon D^+(\rho/2,z^{\ast})+\delta\epsilon^{-\frac{1-\delta}{\delta}}.
\end{align*}
We can choose $\gamma$ and $\epsilon$ small such that $N\gamma^{3-3/d}<1/8$ and $N\gamma^{-d-2/d+2}(1-\delta)\epsilon<1/8$. The two inequalities above implies that (\ref{eqn80}) can be written into such form:
\begin{equation*}
D^+\left(\gamma\rho/4,z^{\ast}\right)\leq \frac{1}{2} D^+(\rho/2,z^{\ast})+C.
\end{equation*}
 The rest of the proof is a handy modification of Corollary \ref{col_CD}.
\end{proof}

\subsection{Step 2}
We will find some decay rates for the scale invariant quantities with respect to the radius of the cylinder assuming the quantities are initially small.
\begin{lemma}
	\label{lem_initial_halfspace}
	There exists a universal constant $\hat{\epsilon}_0>0$ satisfying the following property. Suppose that for some $\hat{z}=(\hat{x},\hat{t})$, where $\hat{x}=(x',0)$, and for some $\rho_0>0$,  it holds that $\omega(\hat{z},\rho_0)=Q^+(\hat{z},\rho_0)$ and
	\begin{equation}\label{eqn_halfspace5}
	A^+(\rho_0,\hat{z})+E^+(\rho_0,\hat{z})+D^+(\rho_0,\hat{z})^{1/\tau}\leq \hat{\epsilon}_0,
	\end{equation}
	where $\tau=1-\frac{1}{2d}+\epsilon$, $\epsilon\sim\mathcal{O}\left(\frac{1}{d^2}\right)$.
	Then fixing any $\alpha_0\in(0,2)$,  we can find $N>0$ such that for any $\rho
	\in(0,\rho_0/4)$ and $z^{\ast}\in \overline{Q^+(\hat{z},\rho_0/4)}$, the following estimate holds uniformly
	\begin{equation}\label{eqn_halfspace6}
	A^+(\rho,z^{\ast})+E^+(\rho,z^{\ast})
+C^+(\rho,z^{\ast})^{\frac{d-2}{d-2+2/d}}+D^+(\rho,z^{\ast})\leq N\hat{\epsilon}_0^{\alpha_0^2\tau/4}\left(\frac{\rho}{\rho_0}\right)^{\alpha_0^2\tau/2},
	\end{equation}
 where $N$ is a positive constant depending on $\alpha_0$, but independent of $\hat{\epsilon}_0$, $\rho_0$, $\rho$, and $z^{\ast}$.
\end{lemma}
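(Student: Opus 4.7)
The plan is to adapt the three-step iterative blueprint of Lemma \ref{lem_a0_whole} to the boundary setting, using the boundary analogs Lemma \ref{lem_halfspace7}, Lemma \ref{lemAECD_half}, Lemma \ref{lem2_half}, and Lemma \ref{lem_halfspace6} in place of their interior counterparts. First I would translate so that $\hat{z}$ lies at the origin and use the scale invariance of $A^+,E^+,C^+,D^+$ under (\ref{natural_scaling}) to renormalize to the regime $\rho_1:=\rho_0/2$ with $\rho_1^{\alpha\tau}\sim N\hat{\epsilon}_0$ for an auxiliary exponent $\alpha\in(\alpha_0,2)$. For any $z^{\ast}\in\overline{Q^+(0,\rho_0/4)}$, the inclusion $Q^+(z^{\ast},\rho_1)\subset Q^+(0,\rho_0)$ transports the smallness of $A^++E^+$ and $(D^+)^{1/\tau}$ to scale $\rho_1$ centered at $z^{\ast}$ up to a universal constant, and Lemma \ref{lem_halfspace7} then supplies the companion bound on $C^+$.

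Next I would run an induction on the rapidly shrinking sequence $\rho_{k+1}=\rho_k^{1+\beta}$ with a small $\beta>0$ to be chosen, propagating the three bounds $A^+(\rho_k)+E^+(\rho_k)\leq\rho_k^{\alpha}$, $C^+(\rho_k)^{(d-2)/(d-2+2/d)}\leq\rho_k^{\alpha}$, and $D^+(\rho_k)\leq\rho_k^{\alpha\tau}$. Taking $\gamma=\rho_k^{\beta}$ in Lemma \ref{lem2_half} advances $A^++E^+$; the delicate term is the product $(A^++E^+)^{1/2}(D^+)^{d/(2d-1)}$, which contributes exponent $\alpha/2+\alpha\tau d/(2d-1)$ in $\rho_k$. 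For this to strictly exceed $\alpha(1+\beta)$ one needs $\tau>(2d-1)/(2d)$, which is precisely the rationale for taking $\tau=1-\frac{1}{2d}+\epsilon$. Lemma \ref{lem_halfspace7} then propagates $C^+$ automatically. To advance $D^+$ I would apply Lemma \ref{lem_halfspace6} with $\delta_2$ chosen much smaller than the slack produced by $\epsilon$; its first term contributes exponent $-(d+2/d-2)\beta+\alpha(1+2/(d(d-2)))$, while the remaining two produce $(4-3/d-\delta_2)\beta+\alpha\tau$ and $(4-3/d-\delta_2)\beta+\alpha(1-1/(2d))$, all of which must exceed $\alpha\tau(1+\beta)$.

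The main obstacle is the delicate balancing of $\beta$ and $\epsilon$: $\beta$ must be small enough that each exponent in the $A^++E^+$ iteration clears $\alpha(1+\beta)$ with room to spare, yet large enough that the gap $\alpha\tau-\alpha(1-1/(2d))=\alpha\epsilon$ in the third $D^+$ contribution above is absorbed into the linear $\beta$-term $(4-3/d-\delta_2)\beta$; this forces the scaling $\epsilon\sim\mathcal{O}(1/d^2)$ stated in the lemma. Once a compatible $(\beta,\epsilon)$ is fixed and the induction closes (with the residual $N\rho_k^{\xi}$ factors absorbed using $\rho_1^{\alpha\tau}=N\hat{\epsilon}_0$ and the smallness of $\hat{\epsilon}_0$), I would sandwich arbitrary $\rho\in(\rho_{k+1},\rho_k]$ between the two lattice scales, where the losses $(\rho_k/\rho_{k+1})^{d-2}$ on $A^++E^+$ and $(\rho_k/\rho_{k+1})^{d+2/d-2}$ on $D^+$ yield final exponents $(\alpha-(d-2)\beta)/(1+\beta)$ and $(\alpha\tau-(d+2/d-2)\beta)/(1+\beta)$, both of which by the choice of $\beta$ and $\alpha$ close to $2$ exceed $\alpha_0^2\tau/2$. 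Undoing the normalization then reinserts the prefactor $\hat{\epsilon}_0^{\alpha_0^2\tau/4}\rho_0^{-\alpha_0^2\tau/2}$, giving (\ref{eqn_halfspace6}).
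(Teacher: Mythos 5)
Your blueprint is close, but there is a genuine gap in the way you propagate $D^+$ through the iteration, and closing it requires the single most delicate idea of the paper's proof: the $D^+$ recursion cannot be advanced with the same step size $\beta$ as the $A^+ + E^+$ recursion. You write that after choosing $\gamma = \rho_k^{\beta}$ in Lemma \ref{lem_halfspace6}, the third term contributes $(4-3/d-\delta_2)\beta + \alpha(1-1/(2d))$ and this must exceed $\alpha\tau(1+\beta)$, which forces $\beta \gtrsim \alpha\epsilon/(4-3/d-\alpha\tau)$. Meanwhile, the product term $(A^++E^+)^{1/2}(D^+)^{d/(2d-1)}$ in Lemma \ref{lem2_half} forces $\beta \lesssim \alpha\left(\frac{d\tau}{2d-1}-\frac12\right)/(d+\alpha-1) = \alpha \cdot \frac{d\epsilon}{(2d-1)(d+\alpha-1)}$. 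Both of these bounds are linear in $\epsilon$, so their ratio is $\epsilon$-independent: it is asymptotically $\approx d$ for large $d$ and $\alpha$ near $2$. No choice of $\epsilon$, and in particular not $\epsilon \sim 1/d^2$, can open this window. The paper resolves this by running the $D^+$ recursion with a macro-step $\beta_1 = (1+\beta)^{n_0+1}-1$, i.e.\ applying Lemma \ref{lem_halfspace6} from $\rho_{k-n_0}$ directly to $\rho_{k+1}$, and then tuning $n_0$ so that $\beta_1$ lands in the (nonempty, provided $\epsilon \sim 1/d^2$) window $\left(\frac{\epsilon\alpha}{4-3/d-\alpha\tau},\ \frac{(\frac{1}{2d}+\frac{2}{d(d-2)}-\epsilon)\alpha}{d+\frac{2}{d}-2+\alpha\tau}\right)$ while $\beta$ itself stays below the $A^+ + E^+$ threshold. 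This decoupling of step sizes is not cosmetic; your synchronous version does not close.

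A second, smaller issue: Lemmas \ref{lem2_half}, \ref{lemAECD_half}, and \ref{lem_halfspace6} apply only when the cylinder is anchored on the flat boundary, i.e.\ $\omega(\hat z,\rho) = Q^+(\hat z,\rho)$. Your induction therefore only treats $z^{\ast}$ lying on $\{x_d = 0\}$, but the lemma claims the estimate for all $z^{\ast}\in\overline{Q^+(\hat z,\rho_0/4)}$. The paper's second step handles an interior $z^{\ast} = (\tilde t,\tilde x',\tilde x_d)$ by a two-case argument --- enlarging to the boundary cylinder $Q^+(\hat z^{\ast}, 2\rho)$ when $\rho \ge \tilde x_d$, and feeding the just-obtained boundary smallness at scale $\tilde x_d$ into the interior Lemma \ref{lem_a0_whole} when $\rho < \tilde x_d$ --- and it is precisely this interior pass that degrades the final exponent from the boundary rate $\alpha_0$ to the weaker $\alpha_0^2\tau/2$ appearing in \eqref{eqn_halfspace6}. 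Your sketch does not account for this degradation, which is one reason the stated conclusion may have looked unexpectedly weak.
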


\begin{proof}
We divide the proof into two parts. In the first part, we only consider $z^*$  on the boundary $ Q(\hat{z},\rho_0/2)\cap \{x_d=0\}$. In  the second part, we use an iteration argument to close the proof for general $z^*\in Q^+(\hat{z},\rho_0/4)$.

i) First we assume $z^{\ast}\in Q(\hat{z},\rho_0/2)\cap \{x_d=0\}$. In this case we will prove a slightly stronger estimate than (\ref{eqn_halfspace6}):
\begin{equation}\label{eqn_halfspace66}
A^+(\rho,z^{\ast})+E^+(\rho,z^{\ast})+C^+(\rho,z^{\ast})^{\frac{d-2}{d-2+2/d}}+D^+(\rho,z^{\ast})^{1/\tau}\leq N\hat{\epsilon}_0^{\alpha_0/2}\left(\frac{\rho}{\rho_0}\right)^{\alpha_0}.
\end{equation}
 By (\ref{eqn_halfspace5}) and
$$Q^+(z^{\ast},\rho_0/2) \subset Q^+(\hat{z},\rho_0),$$
we get
\begin{equation}
\label{eqn_halfspace_step2}
A^+(\rho_1,z^{\ast})+E^+(\rho_1,z^{\ast})+D^+(\rho_1,z^{\ast})^{1/\tau}\leq N\hat{\epsilon}_0,
\end{equation}
where $\rho_1=\rho_0/2.$
By Lemma \ref{lem_halfspace7},
\begin{equation}
                \label{eq8.04}
C^+(\rho_1,z^*)\leq (N\hat{\epsilon}_0)^{\frac{d-2+2/d}{d-2}}.
\end{equation}
Next we fix an auxiliary parameter  $\alpha\in(\alpha_0,2)$. By a scaling argument, we first discuss a special case when $\rho_1^{\alpha} = N\hat{\epsilon}_0<1$. In this case, we can prove the following decay rates inductively:
\begin{equation}\label{eqn_halfspace13}
A^+(\rho_k)+E^+(\rho_k)\leq \rho_k^{\alpha},\quad C^+(\rho_k)^{\frac{d-2}{d-2+2/d}}\leq \rho_k^{\alpha}, \quad D^+(\rho_k)\leq \rho_k^{\alpha\tau},
\end{equation}
where $\rho_{k+1}=\rho_k^{1+\beta}=\rho_1^{(1+\beta)^k}$,  and $\beta$ is a small number to be specified. For $k=1$, the statement follows from  (\ref{eqn_halfspace_step2}) and \eqref{eq8.04}. Next  by choosing $\gamma=\rho_k^{\beta}$ and $\rho=\rho_k$ in (\ref{eqn_halfspace2}), we have
$$
		A^+(\rho_{k+1})+E^+(\rho_{k+1})\leq N\left[ \rho_k^{2\beta+\alpha}+\rho_k^{(-d+1)\beta}\left(\rho_k^{\frac{d-1}{d-2}\alpha}+\rho_k^{\left(\frac{1}{2}+\frac{d}{2d-1}\tau\right)\alpha}\right)\right].
$$
We choose $\beta$ satisfying
$$\beta < \min\left\{\frac{\alpha}{(d-2)(d+\alpha-1)},\frac{\left(\frac{d\tau}{2d-1}-\frac{1}{2}\right)\alpha}{d+\alpha-1} \right\}\sim \mathcal{O}\left(\frac{1}{d^2}\right). $$
Then all the exponents on the right-hand sides are greater than $(1+\beta)\alpha$.

To estimate the remaining term $D^+(\rho_{k+1})$, we apply Lemma \ref{lem_halfspace6} but with different step size. Let $\beta_1=(1+\beta)^{n_0+1}-1$, where $n_0$ is an integer to be specified later. Instead of plugging in the result of one last previous step, we plug in (\ref{eqn_halfspace20}) with $\gamma = \rho_{k-n_0}^{\beta_1}$ and $\rho=\rho_{k-n_0}$, and we have

\begin{align}\label{eqn_halfspace26}
	D^+(\rho_{k+1})\leq & N\left[\rho_{k-n_0}^{(-d-2/d+2)\beta_1+\left(1+\frac{2}{d(d-2)}\right)\alpha}+ \rho_{k-n_0}^{(4-3/d-\delta_2)\beta_1}\left(\rho_{k-n_0}^{\alpha\tau}+\rho_{k-n_0}^{\alpha(1-\frac{1}{2d})}\right)\right]
\end{align}

Our goal is to choose an appropriate $\beta_1$ such that all three exponents on right-hand side of (\ref{eqn_halfspace26}) are greater than $(1+\beta_1)\alpha\tau$. We hence obtain an upper bound and a lower bound for $\beta_1$:
\begin{equation*}
	\beta_1 > \frac{\left(\tau-\left(1-\frac{1}{2d}\right)\right)\alpha}{4-\frac{3}{d}-\alpha\tau} = \frac{\epsilon\alpha}{4-\frac{3}{d}-\alpha\tau},
\end{equation*}
\begin{equation*}
\beta_1 < \frac{\left(1+\frac{2}{d(d-2)}-\tau\right)\alpha}{d+\frac{2}{d}-2+\alpha\tau} = \frac{\left(\frac{1}{2d}+\frac{2}{d(d-2)}-\epsilon\right)\alpha}{d+\frac{2}{d}-2+\alpha\tau}\sim \mathcal{O}\left(\frac{1}{d^2}\right).
\end{equation*}
To ensure such $\beta_1$ exists, we make $\epsilon\sim\mathcal{O}\left(\frac{1}{d^2}\right)$ small such that the upper bound is greater than the lower bound of $\beta_1$. As long as $\beta$ is small enough, there exists an integer $n_0$ such that $\beta_1=(1+\beta)^{n_0+1}-1$ satisfies the conditions above.

Now we can find $\xi>0$ depending on $\beta$ and that
\begin{equation*}
A^+(\rho_{k+1})+E^+(\rho_{k+1})\leq N\rho_{k+1}^{\alpha+\xi}, \quad D^+(\rho_{k+1})\leq N\rho_{k+1}^{\alpha\tau+\xi},
\end{equation*}
where $N$ is a constant independent of $k$ and $\xi$ . By taking $\hat{\epsilon}_0$ small enough such that $ N\rho_{k+1}^{\xi}< N\rho_1^{\xi} <N(N\hat{\epsilon}_0)^{\xi/2}<1$ , we obtain
\begin{equation*}
A^+(\rho_{k+1})+E^+(\rho_{k+1})\leq \rho^{\alpha}_{k+1}, \quad D^+(\rho_{k+1})\leq \rho^{\alpha\tau}_{k+1}.
\end{equation*}
By induction, we have justified (\ref{eqn_halfspace13}) for the case when $\rho_1^{\alpha} = N\hat{\epsilon}_0$.

For convenience,   we additionally assume that the parameter $\beta$ satisfying
$$
\alpha_0 < \min\left\{\frac{1}{1+\beta}(\alpha-(d-2)\beta),\frac{1}{(1+\beta)\tau}\left(\alpha\tau-\left(d+\frac{2}{d}-2\right)\beta\right)\right\},$$
There always exists feasible $\beta$ because $\alpha>\alpha_0$.

Now for any $\rho\in(0,\rho_0/2)$, we can find a positive integer $k$ such that $\rho_{k+1}\leq \rho<\rho_k$. Then
$$A^+(\rho)+E^+(\rho)\leq \left(\frac{\rho_k}{\rho_{k+1}}\right)^{d-2}(A^+(\rho_k)+E^+(\rho_k))\leq \rho_k^{\alpha-(d-2)\beta}=\rho_{k+1}^{\frac{1}{1+\beta}(\alpha-(d-2)\beta)}\leq \rho_{k+1}^{\alpha_0},$$
$$D^+(\rho)\leq \left(\frac{\rho_k}{\rho_{k+1}}\right)^{d+\frac{2}{d}-2}D^+(\rho_k)\leq \rho_k^{\alpha\tau-(d+\frac{2}{d}-2)\beta}=  \rho_{k+1}^{\frac{1}{1+\beta}(\alpha\tau-(d+\frac{2}{d}-2)\beta)}\leq \rho_{k+1}^{\alpha_0\tau}.$$
Hence we have proved the statement of the lemma when $\rho_1^{\alpha} = N\hat{\epsilon}_0$. For general $\rho_0>0$, we  use the scale invariant property of the quantities and yield similar results with an additional scaling factor $N\hat{\epsilon}_0^{\alpha_0/\alpha}\rho_{0}^{-\alpha_0}$   on the right-hand side. Hence (\ref{eqn_halfspace66}) is true.

ii) To deal with $z^{\ast}\in Q^+(\hat{z},\rho_0/4)$, we need to discuss two cases as comparing $x^*_d$, the distance of $z^*$ to the boundary, with $\rho$, the radius of the cylinder.

 When $\rho\ge x^*_d$, we denote the projection of $z^*$ on the boundary by $\hat{z}^*$. Because $\w(z^*,\rho)\subset Q^+(\hat{z}^*,2\rho)$, by definition we have $$A^+(\rho,z^*)+E^+(\rho,z^*)+C^+(\rho,z^*)^{\frac{d-2}{d-2+2/d}}\leq N\left(A^+(2\rho,\hat{z}^*)+E^+(2\rho,\hat{z}^*)+C^+(2\rho,\hat{z}^*)^{\frac{d-2}{d-2+2/d}}\right).$$ By the triangle inequality, we have
\begin{equation*}
\int_{\omega(z^*,\rho)}\abs{p-[p]_{x^*,\rho}(t)}^{2-1/d}\, dz\leq N\int_{Q^+(\hat{z}^{\ast},2\rho)}\abs{p-[p]_{\hat{x}^{\ast},2\rho }(t)}^{2-1/d}\,dz.
\end{equation*}
Hence $D^+(\rho,z^*)\leq ND^+(2\rho,\hat{z}^*)$.
From part (i) we know
	$$A^+(2\rho,\hat{z}^{\ast})+E^+(2\rho,\hat{z}^{\ast})+C^+(2\rho,\hat{z}^{\ast})^{\frac{d-2}{d-2+2/d}}+D^+(2\rho,\hat{z}^{\ast})^{1/\tau}\leq N\hat{\epsilon}_0^{\alpha_0/2}\left(\frac{\rho}{\rho_0}\right)^{\alpha_0}.$$
	The three inequalities above together imply that
	$$A^+(\rho,z^{\ast})+E^+(\rho,z^{\ast})+C^+(\rho,z^{\ast})^{\frac{d-2}{d-2+2/d}}+D^+(\rho,z^{\ast})^{1/\tau}\leq N\hat{\epsilon}_0^{\alpha_0/2}\left(\frac{\rho}{\rho_0}\right)^{\alpha_0}.$$
	
	When $\rho<x^*_d$, we have $\w(z^*,\rho) = Q(z^*,\rho) \subset Q(z^*,x^*_d)\subset Q^+(\hat{z}^*,2x^*_d)$. By the proof above, we have $$A^+(x^*_d,z^{\ast})+E^+(x^*_d,z^{\ast})+C^+(x^*_d,z^{\ast})^{\frac{d-2}{d-2+2/d}}+D^+(x^*_d,z^{\ast})^{1/\tau}\leq N\hat{\epsilon}_0^{\alpha_0/2}\left(\frac{x^*_d}{\rho_0}\right)^{\alpha_0}.$$
	With $\hat{\epsilon}_0$ small such that $ N\hat{\epsilon}_0^{\alpha_0/2}\left(\frac{x^*_d}{\rho_0}\right)^{\alpha_0}<\epsilon_0^{1/\tau} $ where $\epsilon_0$ is from Lemma \ref{lem_a0_whole}, we can apply the interior result of Lemma \ref{lem_a0_whole} to obtain
$$
A^+(\rho,z^{\ast})+E^+(\rho,z^{\ast})
+C^+(\rho,z^{\ast})^{\frac{d-2}{d-2+2/d}}+D^+(\rho,z^{\ast})\leq N\hat{\epsilon}_0^{\alpha_0^2\tau/4}\left(\frac{\rho}{\rho_0}\right)^{\alpha_0^2\tau/2}.$$
	The proof is complete.
\end{proof}

\subsection{Step 3} In this final step, we first use parabolic $L_p$ estimates to further improve the decay rate and then conclude the result by using Campanoto's characterization of H\"{o}lder continuity.
By (\ref{eqn_halfspace6}) from the previous step, we know the following estimates are true for all $\rho>0$ sufficiently small and $z^{\ast}=(t^{\ast},x^{\ast})\in Q(\hat{z},\rho_0/4)\cap \{x_d=0\}$:
\begin{equation}\label{eqn_halfspace15}
\int_{Q^+(z^{\ast},\rho)}|u|^{4-\frac{2}{d}}\, dz\leq N\rho^{d+\frac{2}{d-2}-\frac 1 d+\epsilon},
\end{equation}
\begin{equation}\label{eqn_halfspace16}
\int_{Q^+(z^{\ast},\rho)}|p-[p]_{x^{\ast},\rho}|^{2-\frac{1}{d}}\, dz\leq N\rho^{d+\frac{1}{d}+\epsilon},
\end{equation}
where $\epsilon\sim \mathcal{O}\left(\frac{1}{d^2}\right)$.

Let $v$ be the unique weak solution to the heat equation
$$\partial_t v-\Delta v=0 \quad \text{in }  Q^+(z^{\ast},\rho)$$
with the boundary condition $v=u$ on $\partial_pQ^+(z^{\ast},\rho)$.

Let $0<r<\rho$. By the Poincar\'{e} inequality with zero boundary condition and using the fact that $L_{\infty}$ norm of the gradient of a caloric function in a small half cylinder is controlled by any $L_p$ norm of it in a larger half cylinder, we have
\begin{align}\label{eqn_halfspace18}
	\begin{split}
		\int_{Q^+(z^{\ast},r)}\abs{v}^{2-\frac{1}{d}}\, dz
		&\leq N r^{d+4-\frac{1}{d}}\norm{\nabla v}_{L_{\infty}(Q^+(z^*,r))}\\
		& \leq N\left(\frac{r}{\rho}\right)^{d+4-\frac{1}{d}}\int_{Q^+(z^{\ast},\rho)}|v|^{2-\frac{1}{d}}\, dz.
	\end{split}
\end{align}

Denote $w=u-v$. Then $w$ satisfies the inhomogeneous heat equation
$$\partial_t w_i-\Delta w_i=-\partial_j(u_iu_j)-\partial_i(p-[p]_{x^{\ast},\rho}) \quad \text{in } Q^+(z^{\ast},\rho)$$
with the zero boundary condition. By the classical $L_p$ estimate for the heat equation, we have
$$\Vert\nabla w\Vert_{L_{2-\frac{1}{d}}(Q^+(z^{\ast},\rho))}\leq N\left[\left\lVert |u|^2 \right\rVert_{L_{2-\frac{1}{d}}(Q^+(z^{\ast},\rho))} + \left\lVert p-[p]_{x^{\ast},\rho}\right\rVert_{L_{2-\frac{1}{d}}(Q^+(z^{\ast},\rho))}  \right],$$
which together with (\ref{eqn_halfspace15}) and (\ref{eqn_halfspace16})  yields
\begin{equation*}
\int_{Q^+(z^{\ast},\rho)}|\nabla w|^{2-\frac{1}{d}}\, dz\leq N \rho^{d+\frac{1}{d}+\epsilon}.
\end{equation*}
By the Poincar\'{e} inequality with  zero boundary condition, we get
\begin{equation}\label{eqn_halfspace19}
\int_{Q^+(z^{\ast},\rho)}|w|^{2-\frac{1}{d}}\, dz\leq N \rho^{d+2+\epsilon}.
\end{equation}
Using (\ref{eqn_halfspace18}), (\ref{eqn_halfspace19}), and the triangle inequality, we have
\begin{align*}
	&\int_{Q^+(z^{\ast},r)}|u|^{2-\frac{1}{d}}\, dz \\
	&\leq \int_{Q^+(z^{\ast},r)}|v|^{2-\frac{1}{d}}\, dz+\int_{Q^+(z^{\ast},r)}|w|^{2-\frac{1}{d}}\, dz\\
	& \leq N\left(\frac{r}{\rho}\right)^{d+4-\frac{1}{d}}\int_{Q^+(z^{\ast},\rho)}|v|^{2-\frac{1}{d}}\, dz + N r^{d+2+\epsilon}\\
	&\leq N\left(\frac{r}{\rho}\right)^{d+4-\frac{1}{d}}\int_{Q^+(z^{\ast},\rho)}|u|^{2-\frac{1}{d}}\, dz   +N\left(\frac{r}{\rho}\right)^{d+4-\frac{1}{d}}\int_{Q^+(z^{\ast},\rho)}|w|^{2-\frac{1}{d}}\, dz + N r^{d+2+\epsilon}\\
	& \leq N\left(\frac{r}{\rho}\right)^{d+4-\frac{1}{d}}\int_{Q^+(z_0,\rho)}|u|^{2-\frac{1}{d}}\, dz +N \rho^{d+2+\epsilon}.
\end{align*}
Applying Lemma \ref{lem1},  we obtain
\begin{equation}
\label{eqn_decay}
\int_{Q^+(z^{\ast},r)}|u|^{2-\frac{1}{d}}\, dz\leq N r^{d+2+\epsilon}
\end{equation}
for any $r\leq \rho_0/4$ and $z^{\ast}\in Q(\hat{z},\rho_0/4)\cap\{x_d=0\}$.

 Consider any $\tilde{z}=(\tilde{t},\tilde{x})\in Q^+(\hat{z},\rho_0/8)$. Let  $z^{\ast}=(\tilde{t},\tilde{x}',0)$ be the projection of $\tilde{z}$ on the boundary. Note that $z^{\ast}\in Q(\hat{z},\rho_0/8)\cap\{x_d=0\}$.
We consider two cases either the radius of the parabolic ball around $z^{\ast}$ is smaller or larger than $\tilde{x}_d$.

	\textsl{Case 1:} $\tilde{x}_d\leq r$. In this case, we have $\omega(\tilde{z},r)\subset Q^+(z^{\ast},2r)$. Thus by (\ref{eqn_decay}), we have
	\begin{equation*}
	\int_{\omega(\tilde{z},r)}\abs{u-(u)_{\tilde{z},r}}^{2-1/d}\, dz\leq N\int_{Q^+(z^{\ast},2r)}\abs{u}^{2-1/d}\,dz\leq Nr^{d+2+\epsilon}.
	\end{equation*}
	
	\textsl{Case 2:} $r< \tilde{x}_d$. For $r<\rho\leq \tilde{x}_d$, from the proof of Theorem \ref{thm_holder_whole}, we have
	\begin{align}
		\label{eqn_32}
		\int_{Q(\tilde{z},r)}\abs{u-(u)_{\tilde{z},r}}^{2-1/d}\,dz
		\leq  N\left(\frac{r}{\rho}\right)^{d+4-\frac{1}{d}} \int_{Q(\tilde{z},\rho)}\abs{u-(u)_{\tilde{z},\rho}}^{2-1/d}\,dz + N\rho^{d+2+\epsilon}.
	\end{align}
    We apply  Lemma \ref{lem1} to (\ref{eqn_32}) to get
    \begin{align}
    	\label{eqn_33}
    	\int_{Q(\tilde{z},r)}\abs{u-(u)_{\tilde{z},r}}^{2-1/d}\,dz
    	\leq  N\left(\frac{r}{\tilde{x}_d}\right)^{d+2+\epsilon} \int_{Q(\tilde{z},\tilde{x}_d)}\abs{u-(u)_{\tilde{z},\tilde{x}_d}}^{2-1/d}\,dz + Nr^{d+2+\epsilon}.
    \end{align}
By Case 1, we have
$$ \int_{Q(\tilde{z},\tilde{x}_d)}\abs{u-(u)_{\tilde{z},\tilde{x}_d}}^{2-1/d}\,dz \leq N\tilde{x}_d^{d+2+\epsilon}.$$
Plug this into (\ref{eqn_33}) to get
	\begin{equation*}
 \int_{Q(\tilde{z},r)}\abs{u-(u)_{\tilde{z},r}}^{2-1/d}\,dz \leq Nr^{d+2+\epsilon}.
	\end{equation*}
By Campanato's characterization of H\"{o}lder continuity near a flat boundary (see, for instance, \cite[Lemma 4.11]{Lieberman1}), we can conclude that $u$ is H\"{o}lder continuous in a neighborhood of $\hat{z}$.

\section{Proof of Theorem \ref{thm_main}}
\label{sec_thm1}
In this section, we start with a construction on a sequence of suitable weak solutions which converges to a limiting solution. Let $(u,p)$ be a pair of Leray-Hopf weak solution of the Cauchy problem (\ref{NS1})-(\ref{NS2}) on $\mathbb{R}^d$ or $\mathbb{R}^d_+$. Because of the local strong solvability for smooth data and the weak-strong uniqueness (see, for instance, \cite{Wahl1}), we know that $u$ is regular for $t\in (0,T_0)$ for some $T_0\in(0,T]$. Suppose $T_0$ is the first blowup time of $u$, and $Z_0=(T_0,X_0)=(T_0,X_{0,1},X_{0,2},\ldots,X_{0,d})=(T_0,X'_0,X_{0,d})$ is a singular point. We take a decreasing sequence $\{\lambda_k\}$ converging to 0 and rescale the pair $(u,p)$ at time $T_0$. Define
$$u_k(t,x) = \lambda_k u(T_0+\lambda_k^2t,X_0+\lambda_kx),\quad p_k(t,x) = \lambda_k^2p(T_0+\lambda_k^2t,X_0+\lambda_kx),$$
for each $k=1,2,\ldots$. We will show that  each $(u_k,p_k)$ is a suitable weak solution of (\ref{NS1})-(\ref{NS2}) and $u_k$ is smooth for $t\in (-\lambda_k^{-2}T_0,0)$.

To prove this, the first observation is the property of uniform boundedness of the scale invariant quantities after the rescaling. In  this section, we use the ambiguous notation as we mentioned before in the preliminaries. By $\R^d_{(+)}$ we mean either $\R^d$ or $\R^d_+$ depending on which domain  we are talking about: the whole space or the half space.  The same goes for  $A^{(+)}, E^{(+)}, C^{(+)}, D^{(+)}$, etc.
\begin{lemma}
	\label{lem_bdd}
	Under the assumptions in Theorem \ref{thm_main}, there exists $N>0$  such that for any $z_0\in (-\infty,0]\times\R^d_{(+)}$ and $0<r\leq 1$,
	\begin{equation}
	\label{eqn_Cbdd}
	\limsup_{k\rightarrow \infty} C^{(+)}(r,z_0,u_k,p_k)\leq N,
	\end{equation}
	and
	\begin{equation}
	\label{eqn_Dbdd}
	\limsup_{k\rightarrow \infty}D^{(+)}(r,z_0,u_k,p_k)\leq N.
	\end{equation}
\end{lemma}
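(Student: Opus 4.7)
The plan is to exploit the fact that $A^{(+)}$, $E^{(+)}$, $C^{(+)}$, and $D^{(+)}$ are all invariant under the natural scaling \eqref{natural_scaling}, so that
$$C^{(+)}(r,z_0,u_k,p_k) = C^{(+)}(\lambda_k r, \tilde z_0, u, p), \qquad D^{(+)}(r,z_0,u_k,p_k) = D^{(+)}(\lambda_k r, \tilde z_0, u, p),$$
with $\tilde z_0 = (T_0 + \lambda_k^2 t_0, X_0 + \lambda_k x_0)$. Both bounds will then follow from $k$-independent estimates for $(u,p)$ itself on arbitrarily small cylinders around arbitrary points of the original domain, which in turn should come from the hypothesis $\|u\|_{L_\infty^t L_d^x} \leq K$ together with the local pressure control available in each case.

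I would begin with $C^{(+)}$, where the bound is immediate from H\"older's inequality. Since $2(2d-1)/d = 4-2/d \leq d$ for $d\geq 4$, for each fixed $t$,
$$\int_{\Omega(x_0,\rho)} |u(t,\cdot)|^{4-2/d}\,dx \leq \|u(t,\cdot)\|_{L_d(\Omega)}^{4-2/d}\, |\Omega(x_0,\rho)|^{1-(4-2/d)/d} \leq N K^{4-2/d}\rho^{d-4+2/d};$$
integrating in $t$ over a length-$\rho^2$ interval and dividing by $\rho^{d+2/d-2}$ then yields $C^{(+)}(\rho,z,u,p) \leq N K^{4-2/d}$ uniformly in $\rho>0$ and $z$, which combined with scale invariance proves \eqref{eqn_Cbdd}.

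For $D^{(+)}$ the two cases diverge. When $\Omega = \R^d$, the pressure is determined up to a function of $t$ by $\Delta p = -\partial_i\partial_j(u_iu_j)$, and since such shifts do not affect $D$ I would fix the Riesz-transform representative $p = R_iR_j(u_iu_j)$; the $L_p$-boundedness of Riesz transforms for $1<p<\infty$ then gives $\|p(t,\cdot)\|_{L_{d/2}(\R^d)} \leq N\|u(t,\cdot)\|_{L_d}^2 \leq NK^2$. Since $(2d-1)/d < d/2$ for $d\geq 4$, the same H\"older and exponent arithmetic as for $C^{(+)}$ delivers $D(\rho,z,u,p) \leq NK^{2(2d-1)/d}$ uniformly in $\rho>0$ and $z$. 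When $\Omega = \R^d_+$, the hypothesis \eqref{presure_bd} together with the trivial inequality $\int|p-[p]_{x_0,1}|^{(2d-1)/d} \leq N\int|p|^{(2d-1)/d}$ gives $D^+(1,z,u,p) \leq NK^{(2d-1)/d}$ uniformly in $z$. Combined with the already established uniform bound on $C^+$, Corollary \ref{col6_half} propagates this to every scale $\rho \leq 1/2$ at base points lying on $\{x_d=0\}$, while Corollary \ref{col_CD} handles base points whose distance to the boundary exceeds the current radius; base points closer to the wall are reduced to boundary estimates by the dichotomy used in part (ii) of the proof of Lemma \ref{lem_initial_halfspace}. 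Scale invariance then yields \eqref{eqn_Dbdd}.

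The main obstacle will be the half-space case: \eqref{presure_bd} is only a unit-scale estimate, so the iteration built into Corollary \ref{col6_half} is essential for propagating it to arbitrarily small scales, and the interior-versus-boundary dichotomy from Lemma \ref{lem_initial_halfspace} must be invoked again to cover every admissible base point $\tilde z_0$. In the whole-space case this difficulty does not arise at all, since the Riesz-transform formula provides a global $L_\infty^t L_{d/2}^x$ bound on $p$ whose scaling exactly matches that of $D$, and the uniform estimate on all scales follows with no iteration.
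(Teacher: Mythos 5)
Your proposal is correct and follows essentially the same route as the paper: scale invariance to reduce to $(u,p)$ on small cylinders, H\"older plus the $L_\infty^t L_d^x$ bound for $C^{(+)}$, the Calder\'on--Zygmund/Riesz bound $\|p\|_{L_\infty^tL^x_{d/2}} \lesssim K^2$ with exponent $2-1/d < d/2$ for $D$ in the whole space, and the unit-scale pressure bound \eqref{presure_bd} propagated to arbitrarily small cylinders via Corollaries \ref{col_CD} and \ref{col6_half} combined with the interior/boundary dichotomy for $D^+$ in the half space.
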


\begin{proof}
For  $z_0\in (-\infty,0]\times\R^d_{(+)}$, denote
$$z_0^k = (t_0^k,x_0^k)=(T_0+\lambda^2_k t_0,X_0+\lambda_k x_0).$$
	
For convenience, we first assume $\W = \R^d$ to prove (\ref{eqn_Cbdd}).
Since $C$ and $D$ are invariant, we have
$$C(r,z_0,u_k,p_k) = C(\lambda_k r,z_0^k, u,p),$$
and
$$D(r,z_0,u_k,p_k) = D(\lambda_k r,z_0^k, u,p).$$

Using H\"{o}lder's inequality we have
\begin{align}\label{eqn56}
	\begin{split} C(r,z_0,u,p)&=\frac{1}{r^{d+2/d-2}}\int_{Q(z_0,r)}\abs{u}^{\frac{2(2d-1)}{d}}\, dz\\
			&\leq  \frac{1}{r^{d+2/d-4}} \esssup_{t_0-r^2\le t\le t_0}\int_{B(x_0,r)}\abs{u}^{\frac{2(2d-1)}{d}}\, dx\\
			&\leq N\frac{1}{r^{d+2/d-4}}\esssup_{t_0-r^2\le t\le t_0}\left(\int_{B(x_0,r)}\abs{u}^d\, dx\right)^{\frac{2(2d-1)}{d^2}}\left(r^d\right)^{1-\frac{2(2d-1)}{d^2}}\\
			&\leq N\left(\esssup_{t_0-r^2\le t\le t_0}\int_{B(x_0,r)}\abs{u}^d\, dx\right)^{\frac{2(2d-1)}{d^2}}\\
			&\leq N\norm{u}_{L_{\infty}^tL_d^x(Q(z_0,r))}^{\frac{2(2d-1)}{d}}.
		\end{split}
	\end{align}
   Substituting $r$ with $\lambda_k r$ and $z_0$ with $z_0^k$, we have
   $$ C(\lambda_k r,z_0^k, u,p)\leq N\norm{u}_{L_{\infty}^tL_d^x(Q(z_0^k,\lambda_k r))}^{\frac{2(2d-1)}{d}}\leq N,$$
   where in the last inequality we  used (\ref{eqn_Ldcondition}). This part of proof can easily be adapted to the case $\W = \R^d_+$.

    To prove (\ref{eqn_Dbdd}), we need to consider several cases separately:

     \textsl{i)} $\W=\R^d$, by using the Calder\'{o}n-Zygmund estimate, one has
    \begin{equation*}
    	\norm{p}_{L_{\infty}^tL_{d/2}^x((0,T)\times\R^d)}\leq K.
    \end{equation*}
    Since $\frac{d}{2}\geq 2-\frac{1}{d}$, following the same reasoning in (\ref{eqn56}) we can reach (\ref{eqn_Dbdd}).

     \textsl{ii)} $\W=\R^d_+$ and $X_{0,d}>0$, that is the half-space case when $Z_0$ does not lie on the boundary. The domain of $u_k$ will expand to the whole space, therefore $z_0\in(-\infty,0]\times \bR^d$. When $X_{0,d}\geq 1/4$, i.e., $Z_0$ is away from the boundary,  from (\ref{presure_bd}) we know $D(1/4,Z_0,u,p)\leq N$. When $k$ is large, $Q(z_0^k,\lambda_k r)\subset Q(Z_0,1/4)$. By Corollary \ref{col_CD}, we know $D(r,z_0,u_k,p_k)=D(\lambda_kr,z_0^k,u,p)$ is uniformly bounded. When $X_{0,d}<1/4$, i.e., $Z_0$ is close to the boundary, recall our notation $\omega(Z_0,1)=Q(Z_0,1)\cap (0,T)\times\R^d_+$. Denote $\hat{Z}_0 = (X_0',0,T_0)$ to be the projection of $Z_0$ on the boundary and $\hat{z}_0^k$ to be projection of $z_0^k$. Note when $k$ is large,  $\lambda_k r\ll X_{0,d}$, hence
     $$
     Q(z_0^k,\lambda_k r)\subset Q(Z_0,X_{0,d})\subset Q^+(\hat{Z}_0,2X_{0,d}) \subset Q^+(\hat{Z}_0,1/2)\subset \w(Z_0,1).
     $$
     From the proofs of Corollaries \ref{col_CD} and \ref{col6_half}, we know that
     \begin{equation}
     \label{eqn_60}
     D(r,z_0,u_k,p_k) = D(\lambda_k r, z_0^k,u,p) \leq N D(X_{0,d},Z_0,u,p)+C,
     \end{equation}
     \begin{equation}
     \label{eqn_61}
     D^+(2X_{0,d},\hat{Z}_0,u,p)\leq N D^+(1/2,\hat{Z}_0,u,p)+C\leq ND^+(1,Z_0,u,p)+C\leq C.
     \end{equation}
     We use (\ref{presure_bd}) in the last inequality.
     Moreover, we have
      \begin{equation}
      \label{eqn_62}
     	\int_{ Q(Z_0,X_{0,d})}\abs{p-(p)_{ Z_0,X_{0,d}}}^{2-1/d}\, dz\leq N\int_{Q^+(\hat{Z}_0,2X_{0,d})}\abs{p-(p)_{\hat{Z}_0,2X_{0,d}}}^{2-1/d}\,dz,
     \end{equation}
     which implies
     $$ D(X_{0,d},Z_0,u,p) \leq N D^+(2X_{0,d},\hat{Z}_0,u,p).$$
     Together with  (\ref{eqn_60}) and (\ref{eqn_61}), we again deduce that $D(r,z_0,u_k,p_k)$ is uniformly bounded.

     \textsl{iii)} $\W=\R^d_+$ and  $X_{0,d}=0$, that is the half-space case when $Z_0$ lies on the boundary.  The domain of $u_k$ will expand to the half space, therefore $z_0\in(-\infty,0]\times \bR^d_+$. We compare the radius of the cylinder against the distance from $x_0$ to the boundary.  When $ r\geq x_{0,d}$, we have $\w(z_0^k,\lambda_k r)\subset Q^+(\hat{z}_0^k,2\lambda_k r) \subset Q^+(Z_0,1)$ when $k$ is large. By (\ref{presure_bd}), (\ref{eqn_62}),  and the proof of Corollary \ref{col6_half}, we have
     \begin{align*}
     	D^+(r,z_0,u_k,p_k) &= D^+(\lambda_k r,z_0^k,u,p) \nonumber \\
     	& \leq ND^+(2\lambda_k r,\hat{z}_0^k,u,p) \nonumber \\
     	& \leq ND^+(1,Z_0,u,p)+C \nonumber \\
     	& \leq C.
     \end{align*}
      When $ r\leq x_{0,d}$, we have
      $$
      \w(z_0^k,\lambda_k r)=Q(z_0^k,\lambda_k r)\subset Q(z_0^k,\lambda_k x_{0,d})\subset Q^+(\hat{z}_0^k,2\lambda_k x_{0,d}) \subset Q^+(Z_0,1)
      $$
      when $k$ is large.  From (\ref{presure_bd}), (\ref{eqn_62}), and the proofs of Corollaries \ref{col_CD}, and \ref{col6_half}, we know that
     \begin{align*}
     D(r,z_0,u_k,p_k)& = D(\lambda_k r,z_0^k,u,p) \nonumber \\
     &\leq ND(\lambda_k x_{0,d},z_0^k,u,p)+C \nonumber \\
     &\leq ND^+(2\lambda_k x_{0,d},\hat{z}_0^k,u,p)+C \nonumber \\
     &\leq ND^+(1,Z_0,u,p)+C \nonumber \\
     &\leq C.
     \end{align*}
Therefore, we have proved that $D^+(r,z_0,u_k,p_k)$ is uniformly bounded by  the $L^t_{\infty}L_d^x$ condition in (\ref{eqn_Ldcondition}) and  the local pressure condition in (\ref{presure_bd}) .
\end{proof}

Next we want to show, up to passing to a subsequence, $\{(u_k,p_k)\}_{k=1}^{\infty}$ converge to a limiting solution $(u_{\infty},p_{\infty})$. We modify \cite[Proposition 3.5]{Dong1} and state the results on $\R^d$ in next proposition. These results can be easily extended to $\R^d_+$.  To make the statement concise, we hereby introduce the following notation: $L_{p,\text{unif}} (\W_T)$, which means that the $L_{p}$ norm in $Q(z_0,1)\cap \W_T$ for any $z_0\in \W_T$ are uniformly bounded independent of the choice of $z_0$.
\begin{prop}
	\label{prop_ukconverge}
	i) There is a subsequence of {$(u_k,p_k)$}, which is still denoted by {$(u_k,p_k)$}, such that
\begin{equation}
\label{eqn51}
u_k\rightarrow u_{\infty} \text{ in } C([t_0-1/4^2,t_0];L_{q_1}(B(x_0,1/4))),
\end{equation}
\begin{equation}
\label{eqn52}
p_k\rightharpoonup p_{\infty} \text{ weakly in } L_{2-\frac{1}{d}}(Q(z_0,1/4)).
\end{equation}
for any $z_0\in (-\infty,0]\times\R^d
$ and $q_1\in [1,d)$.

ii) Furthermore, $(u_{\infty},p_{\infty})$ is a suitable weak solution of (\ref{NS1}) in $(-\infty,0)\times\R^d
$, and
$$ u_{\infty}\in L^t_{q_2}L_d^x((-T_1,0)\times\R^d
), \quad p_{\infty}\in L_{2-\frac{1}{d},\text{unif}} ((-T_1,0)\times\R^d
).$$
for any $T_1>0$ and $q_2\in[1,\infty)$.
	
\end{prop}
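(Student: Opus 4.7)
The plan is a standard compactness-and-passage-to-the-limit argument in three stages: uniform bounds, compactness, and verification that the limit is a suitable weak solution.

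For the uniform bounds (Stage 1), Lemma \ref{lem_bdd} already gives $\limsup_k C^{(+)}(r,z_0,u_k,p_k)$ and $\limsup_k D^{(+)}(r,z_0,u_k,p_k)$ bounded for every $z_0 \in (-\infty,0]\times\R^d_{(+)}$ and $r\le 1$. Feeding these into Lemma \ref{lemAECD} (or its boundary analogue \ref{lemAECD_half}) yields uniform control of $A^{(+)}(r/2)$ and $E^{(+)}(r/2)$, so $u_k$ is locally uniformly bounded in $L_\infty^t L_2^x \cap L_2^t \dot{W}^{1,x}_2$. Scale invariance of (\ref{eqn_Ldcondition}) also gives $\|u_k\|_{L_\infty^t L_d^x} \le K$, and $p_k$ is uniformly bounded locally in $L_{2-1/d}$.

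For compactness (Stage 2), we use the equation (\ref{NS1}) to estimate $\partial_t u_k$ in a negative Sobolev norm via the uniform bounds on $u_k\otimes u_k$, $\nabla u_k$, and $p_k-[p_k]_{x_0,r}$. The Aubin--Lions lemma extracts a subsequence with $u_k\to u_\infty$ strongly in $L^{t,x}_q(Q(z_0,1/4))$ for any $q<4-2/d$ (the allowable range coming from Lemma \ref{lem7}), $\nabla u_k\rightharpoonup \nabla u_\infty$ weakly in $L_2$, and $p_k\rightharpoonup p_\infty$ weakly in $L_{2-1/d}(Q(z_0,1/4))$, giving (\ref{eqn52}). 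The time continuity $u_k\to u_\infty$ in $C([t_0-1/16,t_0];L_{q_1}(B(x_0,1/4)))$ for $q_1<d$, i.e.\ (\ref{eqn51}), follows by combining equicontinuity in a weak topology (obtained by testing the equation against smooth divergence-free functions) with the uniform $L_\infty^t L_d^x$ bound, via the standard Aubin--Lions--Simon compactness argument.

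For Stage 3, strong convergence of $u_k$ in $L^{t,x}_3$ locally (since $3<4-2/d$ for $d\ge 3$) allows us to pass to the limit in the convective term of the weak momentum equation, producing $(u_\infty,p_\infty)$ as a distributional solution of (\ref{NS1}). The local energy inequality (\ref{eqn_sw_energy}) transfers by lower semicontinuity on the left-hand side and strong--weak pairing on the right; the delicate pressure term $\int p_k\, u_k\cdot\nabla\psi$ is controlled because the conjugate exponent of $2-1/d$ is $(2d-1)/(d-1)=2+1/(d-1)<4-2/d$, so $u_k$ converges strongly in that norm and the pairing with $p_k\rightharpoonup p_\infty$ closes. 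The integrability claims in part (ii) then follow: $u_\infty\in L_\infty^t L_d^x$ by weak-$\ast$ lower semicontinuity (hence trivially in $L^t_{q_2}L^x_d$ on any bounded interval), and $p_\infty\in L_{2-1/d,\mathrm{unif}}$ from the uniform local bound on $p_k$. The main technical obstacle is precisely the $p_k u_k$ term in the energy inequality; the choice of integrability exponents throughout the paper is engineered so that this strong--weak pairing works.
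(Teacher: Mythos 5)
Your proposal is correct in spirit and follows the same overall line as the paper: uniform bounds from Lemma \ref{lem_bdd} and Lemma \ref{lemAECD}, compactness to extract $(u_\infty,p_\infty)$, then passage to the limit in the equation and the local energy inequality. The one genuine difference is in the compactness step. You estimate $\partial_t u_k$ in a negative Sobolev space and invoke Aubin--Lions--Simon, whereas the paper first upgrades the $L_\infty^t L_2^x\cap L_2^t W^1_{2,x}$ and $L_\infty^t L_d^x$ bounds to $\|u_k\|_{L_4(Q(z_0,1/2))}\leq N$ via Lemma \ref{lem2} and H\"older, so that $u_k\cdot\nabla u_k\in L_{4/3}$, and then applies the coercive $L_p$ estimate for the nonstationary Stokes system (Maremonti--Solonnikov, cited as \cite{Refer29}) to obtain uniform bounds on $\partial_t u_k$, $D^2 u_k$, and $\nabla p_k$ in $L_{4/3}(Q(z_0,1/4))$. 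This stronger parabolic regularity gives convergence in $C([t_0-1/16,t_0];L_{4/3})$ rather straightforwardly, and interpolation with the $L_\infty^t L_d^x$ bound then yields the $C_t L_{q_1}^x$ convergence for all $q_1<d$, after which a Cantor diagonal argument over $z_0$ finishes part (i). Your negative-Sobolev route is viable and standard, but the delicate piece — deducing convergence uniformly in $t$ (the mode $C([t_0-1/16,t_0];L_{q_1})$ rather than merely $L^p_{t,x}$) from a negative-Sobolev bound on $\partial_t u_k$ — is exactly where the paper chooses the stronger Stokes estimate to avoid extra work, and your sketch leaves this step at the level of a citation to "the standard Aubin--Lions--Simon argument." Your Stage 3 discussion of the suitability verification is actually more explicit than what the paper offers (the paper dismisses part (ii) in one sentence), and your exponent check that $(2d-1)/(d-1)<4-2/d$ is correct; this is the right reason the strong--weak pairing in the pressure term closes.
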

\begin{proof}
	First we fix a $z_0\in(-\infty,0]\times\R^d$. By the previous Lemma \ref{lem_bdd}, $p_k$'s have a uniform bound of the $L_{2-\frac{1}{d}}(Q(z_0,1))$ norm, so there is a subsequence, which is still denoted by $\{p_k\}$, such that (\ref{eqn52}) holds. Similarly,
	\begin{equation*}
	\norm{u_k}_{L_{\infty}^tL_d^x(Q(z_0,1))}\leq \norm{u_k}_{L_{\infty}^tL_d^x((t_0-1,t_0)\times\R^d)}\leq N,
	\end{equation*}
	where $N$ is independent of $k$. By Lemmas \ref{lemAECD} and \ref{lem_bdd}, we have
	\begin{equation*}
	A(1/2,z_0,u_k,p_k)+E(1/2,z_0,u_k,p_k)\leq N.
	\end{equation*}
	From (\ref{eqn_Ldcondition}) and the weak continuity of Leray-Hopf weak solutions, we can conclude that
	\begin{equation}
	\label{eqn59}
	\norm{u_k(t,\cdot)}_{L_d(B(z_0,1/2))}\leq N,
	\end{equation}
	for each  $t\in [-\lambda_k^{-2}T_0,0)$. By using Lemma \ref{lem2} with $q=2d/(d-2)$ and $r=1/2$, we have
	\begin{equation*}
		\norm{u_k}_{L_2^tL_{2d/(d-2)}^x(Q(z_0,1/2))}\leq N,
	\end{equation*}
which together with (\ref{eqn59}) and H\"{o}lder's inequality yields
\begin{equation*}
	\norm{u_k}_{L_4(Q(z_0,1/2))}\leq N, \quad \norm{u_k\cdot \nabla u_k}_{L_{4/3}(Q(z_0,1/2))}\leq N.
 \end{equation*}
 Following  the coercive estimate for the Stokes system (see, for instance, \cite{Refer29}) we have
\begin{equation*}
\partial_t u_k,\ D^2 u_k,\ \nabla p_k\in L_{4/3}(Q(z_0,1/4))
\end{equation*}
with uniform norms.
Therefore, we can find a subsequence still denoted by $\{u_k\}$ such that
\begin{equation*}
	u_k\rightarrow u_{\infty} \text{ in } C([t_0-1/4^2,t_0];L_{4/3}(B(x_0,1/4))).
\end{equation*}
This together with (\ref{eqn59}) gives (\ref{eqn51}) by using H\"{o}lder's inequality. To finish the proof of Part i), it suffices to use a Cauchy diagonal argument. Part ii) then follows from Part i)   and the fact that  $p_k$'s have a uniform bound of the $L_{2-\frac{1}{d}}$ norm in $Q(z_0,1)$, which is independent of the choice of $z_0$.
\end{proof}

\begin{corollary}\label{col_Csmall}
	When $d\geq 4$, for  any $\epsilon>0$, $r>0$, and $T_1\geq 1$, we can find $R\geq 1$ such that, for any $z_0\in (-T_1-1,0]\times(\mathbb{R}^d_{(+)}\setminus B_{R+1}^{(+)})$,
	\begin{equation}
	\label{eqn_limsupC}
	\limsup_{k\rightarrow \infty}C^{(+)}(r,z_0,u_k,p_k)\leq \epsilon,
	\end{equation}
\end{corollary}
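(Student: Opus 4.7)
The plan is to pass to the limit $u_k \to u_\infty$ from Proposition \ref{prop_ukconverge} and reduce the claim to showing that $C^{(+)}(r, z_0, u_\infty, p_\infty)$ decays as $|x_0| \to \infty$; this decay will then follow from the global $L_\infty^t L_d^x$ bound on $u_\infty$ via Lebesgue's dominated convergence theorem.

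First I would fix $z_0 \in (-T_1-1, 0] \times \mathbb{R}^d_{(+)}$ and cover $Q^{(+)}(z_0, r)$ by finitely many cylinders of radius $1/4$. Proposition \ref{prop_ukconverge} (and its half-space counterpart) then yields strong convergence $u_k \to u_\infty$ in $L_{q_1}(Q^{(+)}(z_0, r))$ for every $q_1 \in [1, d)$. Since $d \geq 4$ implies $4 - 2/d < d$, choosing $q_1 = 4 - 2/d$ gives
\[
\int_{Q^{(+)}(z_0, r)} |u_k|^{4-2/d}\,dz \longrightarrow \int_{Q^{(+)}(z_0, r)} |u_\infty|^{4-2/d}\,dz,
\]
and therefore $\limsup_{k\to\infty} C^{(+)}(r, z_0, u_k, p_k) = C^{(+)}(r, z_0, u_\infty, p_\infty)$. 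It thus suffices to produce $R$ such that the latter is at most $\epsilon$ whenever $|x_0| \geq R+1$ and $t_0 \in (-T_1 - 1, 0]$.

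Next, by weak-$*$ lower semicontinuity, $u_\infty$ satisfies $\|u_\infty\|_{L_\infty^t L_d^x} \leq K$ on its full spatial domain. Since $4 - 2/d \leq d$ for $d \geq 4$, H\"older's inequality in space combined with Jensen's inequality in time (with exponent $(4-2/d)/d < 1$) yields a bound of the form
\[
C^{(+)}(r, z_0, u_\infty, p_\infty) \leq N(r, d)\left(\int_{-T_1 - 1 - r^2}^{0}\int_{\{|y| > |x_0|-r\}\cap\mathbb{R}^d_{(+)}} |u_\infty(s, y)|^d\,dy\,ds\right)^{(4-2/d)/d},
\]
where $N(r, d)$ depends only on $r$ and $d$. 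Because the time strip $(-T_1 - 1 - r^2, 0)$ is finite and $\|u_\infty\|_{L_\infty^t L_d^x}\leq K$, the function $|u_\infty|^d$ is integrable over $(-T_1-1-r^2, 0)\times\mathbb{R}^d_{(+)}$ with total mass at most $(T_1+1+r^2)K^d$. Lebesgue's dominated convergence theorem then drives the tail integral to zero as $|x_0| \to \infty$, uniformly in $t_0$; choosing $R$ so that this tail (with $M = R + 1 - r$) is below $(\epsilon/N(r, d))^{d/(4-2/d)}$ completes the argument.

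The principal subtlety lies in the first paragraph: one needs to verify both that Proposition \ref{prop_ukconverge}'s convergence on cylinders of radius $1/4$ upgrades to norm convergence in $L_{4-2/d}$ on the cylinder $Q^{(+)}(z_0, r)$ of arbitrary radius, and that the weak-$*$ limit $u_\infty$ inherits the \emph{global} (rather than merely local) $L_\infty L_d$ bound needed to apply dominated convergence over the unbounded spatial domain.
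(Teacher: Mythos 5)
Your proposal is essentially the paper's argument. Both pass to the limit via Proposition \ref{prop_ukconverge} i) (exploiting that the exponent $2(2-1/d)=4-2/d<d$ for $d\ge 4$, so strong $L_{q_1}$ convergence applies) and then reduce everything to the vanishing of the tail integral $\int_{(-T_1-2,0)\times\{|y|>R\}}|u_\infty|^d\,dz$ as $R\to\infty$, concluding smallness of $C^{(+)}$ by H\"older's inequality. The only substantive difference is how you obtain space-time integrability of $|u_\infty|^d$ on the finite time slab: you propose to extract a weak-$*$ limit in $L^t_\infty L^x_d$ and invoke lower semicontinuity to get a global $\|u_\infty\|_{L^t_\infty L^x_d}\le K$, whereas the paper simply cites Proposition \ref{prop_ukconverge} ii), which already asserts $u_\infty\in L^t_{q_2}L^x_d((-T_1,0)\times\mathbb{R}^d_{(+)})$ for every finite $q_2$ (take $q_2=d$). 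Your route works once the global weak-$*$ extraction is set up via a diagonal argument and is indeed a slightly stronger conclusion, but it is more work than needed; the second of the ``principal subtleties'' you flag at the end is already packaged in Part ii) of the proposition, and the patching of $Q^{(+)}(z_0,r)$ by cylinders of radius $1/4$ is handled by the Cauchy diagonal argument in Part i).
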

\begin{proof}
For simplicity let us assume $\Omega=\mathbb{R}^d$, as the other case is similar.
Due to Proposition \ref{prop_ukconverge} ii), for any $r>0$, and $T_1\geq 1$, we can find $R$ large such that
\begin{equation*}
	\frac{1}{r^{2}}\int_{(-T_1-2,0)\times (\mathbb{R}^d\setminus B_R)}\abs{u_{\infty}}^d\, dz
\end{equation*}
is sufficiently small. Thus by H\"{o}lder's inequality when $d\geq 4$, for any $z_0\in (-T_1-1,0]\times(\mathbb{R}^d\setminus B_{R+1})$,
\begin{equation*}
		\frac{1}{r^{d+2/d-2}}\int_{Q(z_0,r)}\abs{u_{\infty}}^{2(2-1/d)}\, dz
\end{equation*}
is sufficiently small. This together with Proposition \ref{prop_ukconverge} i) proves (\ref{eqn_limsupC}).
\end{proof}

\begin{lemma}
	\label{lem_AECDsmall}
	When $d\geq 4$, for any $\epsilon_1>0$ and $T_1\leq 1$, we can find $R\geq 1$ and $r_0>0$ such that, for any  $z_0\in (-T_1-1,0]\times(\mathbb{R}^d_{(+)}\setminus B^{(+)}_{R+2})$,
	\begin{align}	\label{eqn_CDsmall}
		\begin{split}
				\limsup_{k\rightarrow \infty}\left(A^{(+)}(r_0,z_0,u_k,p_k)+E^{(+)}(r_0,z_0,u_k,p_k)\right.\\
				\left.+C^{(+)}(r_0,z_0,u_k,p_k)+D^{(+)}(r_0,z_0,u_k,p_k)\right)\leq \epsilon_1.
		\end{split}
	\end{align}
\end{lemma}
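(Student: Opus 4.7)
The goal is to upgrade the $C^{(+)}$-smallness of Corollary \ref{col_Csmall} into smallness of all four scale-invariant quantities at a single small scale $r_0$. Both pressure recursions (Lemma \ref{lem_CD} in the interior, Lemma \ref{lem_halfspace6} at the boundary) have the schematic form
\[
D^{(+)}(\gamma\rho) \leq N\gamma^{-a}\,(\text{small factor driven by }C^{(+)}) + N\gamma^{b}\,(\text{uniformly bounded factor}),
\]
with $a,b>0$ depending only on $d$. Accordingly, I will first fix $\gamma$ small enough that the second term is $\leq \epsilon_1/8$ against the uniform bound $M$ provided by Lemma \ref{lem_bdd}; then, via Corollary \ref{col_Csmall}, I will choose $R$ large enough that the first term is also $\leq \epsilon_1/8$. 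Once $D^{(+)}$ is small at a suitable scale, Lemma \ref{lemAECD} (resp. Lemma \ref{lemAECD_half}) delivers the remaining smallness of $A^{(+)} + E^{(+)}$.

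\textbf{Interior case.} Lemma \ref{lem_bdd} gives $M = M(K,d)$ with $\limsup_k D(r,z_0,u_k,p_k) \leq M$ for all $r\in(0,1]$ and all admissible $z_0$. Applying Lemma \ref{lem_CD} with $\rho = 1$ yields
\[
\limsup_k D(\gamma, z_0) \leq N\gamma^{-d-2/d+2}\limsup_k C(1, z_0) + N\gamma^{4-3/d}M.
\]
I choose $\gamma\in(0,1/2]$ with $NM\gamma^{4-3/d}\leq \epsilon_1/8$, then apply Corollary \ref{col_Csmall} at $r=1$ with $\epsilon$ small enough that $N\gamma^{-d-2/d+2}\epsilon \leq \epsilon_1/8$, producing an $R$ that also—after finitely many enlargements via Corollary \ref{col_Csmall} at the scales $\gamma$ and $r_0 := \gamma/2$—makes $\limsup_k C(\gamma,z_0)$ and $\limsup_k C(r_0,z_0)$ as small as desired. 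This produces $\limsup_k D(\gamma, z_0) \leq \epsilon_1/4$. Lemma \ref{lemAECD} with $\rho=\gamma$ then gives $\limsup_k(A(r_0)+E(r_0)) \leq \epsilon_1/4$, and a second application of Lemma \ref{lem_CD} (with $\rho=\gamma$ and parameter $1/2$) gives $\limsup_k D(r_0) \leq \epsilon_1/4$. Summing the four contributions yields the conclusion at scale $r_0$.

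\textbf{Boundary case.} Lemma \ref{lem_halfspace6} has $A^+,E^+$ terms on its right-hand side, so I insert one preliminary step. Lemma \ref{lemAECD_half} with $\rho=1$ converts $C^+(1)$-smallness (from Corollary \ref{col_Csmall}) and the bound $D^+(1) \leq M$ (from Lemma \ref{lem_bdd}) into $S := \limsup_k(A^+(1/2,z_0)+E^+(1/2,z_0))$ being arbitrarily small. Then Lemma \ref{lem_halfspace6} at $\rho=1/2$ with $\delta_2=1$ yields
\[
\limsup_k D^+(\gamma/2, z_0) \leq N\gamma^{-d-2/d+2}S^{1+2/(d(d-2))} + N\gamma^{3-3/d}\bigl(M + 2S^{1-1/(2d)}\bigr).
\]
I first pick $\gamma$ small against $M$, then $R$ large enough (via Corollary \ref{col_Csmall}) to make $S$ small through Lemma \ref{lemAECD_half}, which brings $\limsup_k D^+(\gamma/2,z_0) \leq \epsilon_1/4$. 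Smallness of $A^++E^+$ and $C^+$ at $r_0 = \gamma/2$ then follows exactly as in the interior case, using Lemma \ref{lemAECD_half} with $\rho=\gamma$ and Corollary \ref{col_Csmall} at the appropriate scales.

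\textbf{Main obstacle.} The delicate point is the order of quantifiers: $\gamma$ must be fixed before $\epsilon$ (and thus before $R$), because the $\gamma^{-d-2/d+2}$ blow-up in the pressure recursion is to be absorbed by the smallness of $C^{(+)}$ extracted from Corollary \ref{col_Csmall}, which is the only source of smallness available outside a large ball. The boundary case requires the additional preparatory use of Lemma \ref{lemAECD_half} to convert $C^+$-smallness into $A^++E^+$-smallness before the pressure recursion can even be invoked. The remaining bookkeeping—taking the maximum of the finitely many $R$'s produced by Corollary \ref{col_Csmall} at the scales $1$, $\gamma$, and $\gamma/2$—is routine.
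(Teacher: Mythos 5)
Your proposal is correct and is essentially the same argument as the paper's: both fix the cylinder ratio $\gamma$ first to tame the $\gamma^{4-3/d}$ (resp.\ $\gamma^{3-3/d}$) term against the uniform $D$-bound from Lemma \ref{lem_bdd}, then invoke Corollary \ref{col_Csmall} to pick $R$ so the $C$-driven contribution in the pressure recursion is small, and finally feed both into Lemma \ref{lemAECD} to extract smallness of $A^{(+)}+E^{(+)}$. The only substantive difference is that the paper simply remarks the boundary case is "similar" and omits it, whereas you actually carry out the extra preparatory step (Lemma \ref{lemAECD_half} converting $C^+$-smallness into $A^++E^+$-smallness before Lemma \ref{lem_halfspace6} can be applied), which is a useful clarification rather than a different route.
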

\begin{proof}
	This lemma is to improve the boundedness property of $C$ and $D$ we achieved in Lemma \ref{lem_bdd} to smallness.
We first prove the interior case.
From  Lemma \ref{lem_bdd}, we have
		\begin{equation*}
	\limsup_{k\rightarrow \infty}D(r,z_0,u_k,p_k)\leq N,
	\end{equation*}
	for any $z_0\in (-\infty,0]\times \mathbb{R}^d$.
	 Now let $r=1$. From  Corollary \ref{col_Csmall}, for any $\epsilon_0>0$, we can find $R>0$ such that  for any $z_0\in (-T_1-1,0]\times(\mathbb{R}^d\setminus B_{R+2})$,
	\begin{equation*}
	\limsup_{k\rightarrow \infty}C(1,z_0,u_k,p_k)\leq \epsilon_0.
	\end{equation*}
	By Lemma \ref{lem_CD}, we immediately know that there exists $\gamma>0$  such that
	\begin{equation}\label{eqn_2}
	\limsup_{k\rightarrow \infty}D(\gamma,z_0,u_k,p_k)\leq \frac{N\epsilon_0}{\gamma^{d+2/d-2}}+N\gamma^{4-3/d}.
	\end{equation}
	Then
	\begin{equation}
	\label{eqn_11}
	\limsup_{k\rightarrow \infty}C(\gamma,z_0,u_k,p_k)\leq \frac{\epsilon_0}{\gamma^{d+2/d-2}}.
	\end{equation}
	 Using Lemma \ref{lemAECD} we can get
	\begin{equation}\label{eqn_3}
	\limsup_{k\rightarrow \infty}(A(\gamma/2,z_0,u_k,p_k)+E(\gamma/2,z_0,u_k,p_k))\leq N\left(\frac{\epsilon_0}{\gamma^{d+2/d-2}}\right)^{1/4}.
	\end{equation}
	
We  add (\ref{eqn_11}), (\ref{eqn_2}), and (\ref{eqn_3}) together to obtain
	\begin{align}
	\label{eqn_4}
	&\limsup_{k\rightarrow \infty}(A(\gamma/2,z_0,u_k,p_k)+E(\gamma/2,z_0,u_k,p_k)+C(\gamma/2,z_0,u_k,p_k)+D(\gamma/2,z_0,u_k,p_k)) \nonumber \\
	 &\leq N\left(\frac{\epsilon_0}{\gamma^{d+2/d-2}}\right)^{1/4}
+\frac{N\epsilon_0}{\gamma^{d+2/d-2}}+N\gamma^{4-3/d} .
	\end{align}
	For any $\epsilon_1>0$, fix some $\gamma$ such that $N\gamma^{4-3/d}<\epsilon_1/2$. Next by choosing $R>0$ large enough, we can make $\epsilon_0$ small such that
$$
N\left(\frac{\epsilon_0}{\gamma^{d+2/d-2}}\right)^{1/4}
+\frac{N\epsilon_0}{\gamma^{d+2/d-2}}<\epsilon_1/2.
$$
Letting $r_0=\gamma/2$, by (\ref{eqn_4}) we have proved (\ref{eqn_CDsmall}) for the interior case.
	
	For the boundary case, we do a similar discussion on different shapes of $\w (z_0^k,\lambda_k r)$ as we did in the proof of  Lemma \ref{lem_bdd}.	We hereby omit the repeated proof.
\end{proof}
Next we show that $u_{\infty}$ is identically equal to zero. We modify the proof of  \cite[Proposition 5.3]{Dong1} by replacing  Schoen's trick with the H\"{o}lder continuity proved  in Sections \ref{sec_holder_1} and \ref{sec_holder_2}. We state and prove the following proposition for $\Omega=\mathbb{R}^d_+$. The results readily generalize to problem on $\Omega=\mathbb{R}^d$.
\begin{prop}
	\label{prop_u_infty}
	Under the assumptions of Theorem \ref{thm_main}, let $(u_{\infty},p_{\infty})$ be the suitable weak solution constructed in this section. Then
	\begin{equation*}
	u_{\infty}(t,\cdot)\equiv 0 \quad \forall t\in (-\infty,0).
	\end{equation*}
\end{prop}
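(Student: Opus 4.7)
The plan is to follow the blueprint of \cite{Seregin1}: establish a uniform exterior $L_\infty$ bound for $u_\infty$, show that $u_\infty(0,\cdot)$ vanishes outside a large ball, propagate this zero back in time via backward uniqueness applied to the vorticity equation, and finally conclude via spatial analyticity combined with weak-strong uniqueness.

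First I would fix an arbitrary $T_1\geq 1$ and apply Lemma \ref{lem_AECDsmall} with $\epsilon_1:=\min(\epsilon_0,\hat{\epsilon}_0)$, the thresholds from Theorems \ref{thm_holder_whole} and \ref{thm_holder_half}. This gives $R\geq 1$ and $r_0>0$ such that at every base point $z_0\in (-T_1-1,0]\times(\mathbb{R}^d_{(+)}\setminus B^{(+)}_{R+2})$ the relevant scale invariant sum for $(u_k,p_k)$ is bounded by $\epsilon_1$ for all sufficiently large $k$. Invoking the corresponding $\epsilon$-regularity criterion produces uniform H\"older continuity of $u_k$ in a fixed parabolic neighborhood of each such $z_0$, and a covering argument yields a constant $M$, independent of $k$, with $\|u_k\|_{L_\infty((-T_1,0)\times(\mathbb{R}^d_{(+)}\setminus B^{(+)}_{R+3}))}\leq M$. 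The strong $C_tL^x_{q_1}$-convergence from Proposition \ref{prop_ukconverge} transfers this bound to $u_\infty$.

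Next, for every fixed $\rho>0$ the scale invariance of the $L_d$-norm gives
\begin{equation*}
\|u_k(0,\cdot)\|_{L_d(B_\rho^{(+)})}=\|u(T_0,\cdot)\|_{L_d(B(X_0,\lambda_k\rho)^{(+)})},
\end{equation*}
which tends to $0$ as $k\to\infty$ by absolute continuity of the Lebesgue integral (after noting that (\ref{eqn_Ldcondition}) and weak-in-time continuity of Leray-Hopf solutions ensure $u(T_0,\cdot)\in L_d$). Combined with the $C_tL^x_{q_1}$-convergence, this forces $u_\infty(0,\cdot)\equiv 0$ on $\mathbb{R}^d_{(+)}\setminus\{0\}$, and in particular $\omega_\infty(0,\cdot):=\nabla\times u_\infty(0,\cdot)\equiv 0$ on the exterior set. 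On that exterior, $u_\infty$ is smooth by local parabolic regularity (using the $L_\infty$ bound to treat the nonlinear term as a bounded coefficient in the Stokes system), and $\omega_\infty$ satisfies the standard vorticity equation
\begin{equation*}
\partial_t\omega_\infty-\Delta\omega_\infty=(\omega_\infty\cdot\nabla)u_\infty-(u_\infty\cdot\nabla)\omega_\infty
\end{equation*}
with lower-order coefficients bounded by $M$. The backward uniqueness theorem of \cite{Seregin2}, which covers both $\mathbb{R}^d$ and the half-space with zero Dirichlet data, then yields $\omega_\infty\equiv 0$ on $(-T_1,0)\times(\mathbb{R}^d_{(+)}\setminus B^{(+)}_{R'})$ for some $R'>R$.

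Finally, pick any $t^\ast\in(-T_1,0)$ at which $u_\infty(t^\ast,\cdot)\in L_d$. By Proposition \ref{prop_StrongSolva} there exists a local spatially-analytic strong solution starting from $u_\infty(t^\ast,\cdot)$, and weak-strong uniqueness (see, e.g., \cite{Wahl1}) identifies it with $u_\infty$ on a short time interval. In particular $u_\infty(t^\ast,\cdot)$ is spatially analytic. Since $\mathbb{R}^d_{(+)}$ is connected and $\omega_\infty(t^\ast,\cdot)=0$ on the open exterior set, analyticity extends $\omega_\infty(t^\ast,\cdot)\equiv 0$ to all of $\mathbb{R}^d_{(+)}$. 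The field $u_\infty(t^\ast,\cdot)$ is therefore curl-free and divergence-free, hence harmonic; combining the uniform exterior decay, the $L^t_\infty L^x_d$ bound, and (in the half-space case) the zero boundary trace, a Liouville-type argument forces $u_\infty(t^\ast,\cdot)\equiv 0$. Letting $t^\ast$ range over a dense set and $T_1$ be arbitrary completes the proof. The hardest step will be the backward uniqueness application in the half-space: one must verify that the half-space version of \cite{Seregin2} applies to the exterior region (which has a flat boundary component where $u_\infty$ vanishes) and that $\nabla u_\infty$ is indeed bounded there, which requires pushing the interior parabolic regularity up to the flat boundary using the zero Dirichlet condition.
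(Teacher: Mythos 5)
Your proposal follows the same blueprint as the paper's proof, and most of the steps are correct: the uniform exterior $L_\infty$ bound for $u_k$ via the $\epsilon$-regularity criteria and Lemma \ref{lem_AECDsmall}, the verification that $u_\infty(0,\cdot)\equiv 0$ by $L_d$-scaling plus strong $C_t L^x_{q_1}$-convergence, the derivative bounds needed to write the differential inequality for $\omega_\infty$, and the application of the backward uniqueness theorem of \cite{Seregin2} in the exterior half-space $\{x_d>R'\}$ all match.

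However, there is a genuine gap in the final analyticity step. You write that you ``pick any $t^\ast\in(-T_1,0)$'' and then, by running Proposition \ref{prop_StrongSolva} and weak-strong uniqueness from $t^\ast$, conclude ``in particular $u_\infty(t^\ast,\cdot)$ is spatially analytic.'' This is not correct: a strong solution starting from $L_d$ data at $t^\ast$ is spatially analytic only for strictly positive times $t > t^\ast$. The data $u_\infty(t^\ast,\cdot)$ itself has no reason to be analytic, so the subsequent unique-continuation step ``$\omega_\infty(t^\ast,\cdot)=0$ on the exterior implies $\omega_\infty(t^\ast,\cdot)\equiv 0$ by analyticity'' does not follow. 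Trying to fix this by starting the strong solution at some earlier $t^{**}<t^\ast$ runs into the difficulty that the local existence time $\delta$ could be shorter than $t^\ast - t^{**}$, so you cannot force the analytic strong solution to cover $t^\ast$. The paper's remedy is exactly to sidestep this: it takes an increasing sequence $t_k\nearrow t_0$, runs a strong solution on each interval $(t_k, t_k+\delta_k)$ with $t_k+\delta_k < t_0$, uses spatial analyticity \emph{inside} each such open interval to propagate $\omega\equiv 0$ from the exterior to all of $\mathbb{R}^d_+$ (hence $u_\infty\equiv 0$ there), and then chooses interior times $s_k\in(t_k,t_k+\delta_k)$ with $s_k\to t_0$. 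The weak continuity of the Leray-Hopf solution $u_\infty$ finally passes $u_\infty(s_k,\cdot)\equiv 0$ to the limit $u_\infty(t_0,\cdot)\equiv 0$. You should replace your analyticity-at-$t^\ast$ claim with this approximation-from-below argument and invoke weak continuity at the end.

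One smaller remark: both your Liouville-type closing step and the paper's assertion that $\omega\equiv 0$ implies $u_\infty\equiv 0$ on the full slab use implicitly that a curl-free, divergence-free field vanishing at infinity in $L_d$ (and, in the half-space, vanishing on $\{x_d=0\}$) must be zero; this is correct but worth spelling out in your write-up.
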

\begin{proof}
	Let $\hat{\epsilon}_0$ be the  constant in Theorem \ref{thm_holder_half}. Note that we can assume $\hat{\epsilon}_0$ is smaller than $\epsilon_0$ in Theorem \ref{thm_holder_whole}.  Fix some $T_1\geq 1$. Owing to Lemma \ref{lem_AECDsmall}, we can find $R\geq 1$ and $r_0>0$ such that for any $z_0\in [-T_1-1,0]\times(\mathbb{R}^d_+\setminus B_{R+1}^+)$ ,
	$$	\limsup_{k\rightarrow \infty}(A^+(r_0,z_0,u_k,p_k)+E^+(r_0,z_0,u_k,p_k)+D^+(r_0,z_0,u_k,p_k))\leq \hat{\epsilon}_0.$$ Thus Theorems \ref{thm_holder_whole} and \ref{thm_holder_half} yields that
	$$\limsup_{k\rightarrow \infty}\abs{u_k(z_0)}\leq N(d)$$
	for a.e. $z_0\in[-T_1-1,0)\times (\mathbb{R}^d_+\setminus B^+_{R+2})$. By Proposition \ref{prop_ukconverge}, we obtain
	$$\abs{u_{\infty}(z_0)}\leq N(d)$$
	for a.e. $z_0\in[-T_1-1,0)\times (\mathbb{R}^d_+\setminus B^+_{R+2})$. Upon using the regularity results for linear Stokes systems, one can estimate higher derivatives
	\begin{equation}
		\label{eqn_stoke}
		\abs{D^ju_{\infty}(z_0)}\leq N(d,j)
	\end{equation}
	for any $j\geq 1$ and a.e. $z_0\in[-T_1-1,0)\times (\mathbb{R}^d_+\setminus B^+_{R+3})$.
	
	We now claim $u_{\infty}(0,\cdot)=0$ by adapting the argument in the proof of  \cite[Theorem 1.4]{Seregin1}. For any $x_0\in \mathbb{R}^d_+$, by using (\ref{eqn51}),
	\begin{align*}
	&\int_{\Omega(x_0,1/4)}\abs{u_{\infty}(0,x)}\, dx\\
	&\leq \int_{\Omega(x_0,1/4)}\abs{u_k(0,x)-u_{\infty}(0,x)}\, dx+\int_{\Omega(x_0,1/4)}\abs{u_k(0,x)}\, dx\\
	&\leq \norm{u_k-u}_{C([-1/4^2,0];L_1(\Omega(x_0,1/4)))}+N(d)\left(\int_{\Omega(x_0,1/4)}\abs{u_k(0,x)}^d\,dx\right)^{1/d}\\
	&=\norm{u_k-u}_{C([-1/4^2,0];L_1(\Omega(x_0,1/4)))}+N(d)\left(\int_{\Omega(\lambda_k x_0/4+X_0,\lambda_k/4)}\abs{u(T_0,y)}^d\,dy\right)^{1/d}.	
	\end{align*}
	The right-hand side of the above inequality goes to zero as $k\rightarrow \infty$, which proves the claim.
	
	Because of (\ref{eqn_stoke}), the vorticity $\omega=$ curl $u_{\infty}$ satisfies the differential inequality
	$$\abs{\partial_t\omega-\Delta\omega}\leq N(\abs{\omega}+\abs{\nabla \omega}),$$
	on $(-T_1,0)\times(\mathbb{R}^d_+\setminus B^+_{R+3})$.
	 We apply the half-space backward uniqueness theorem proved in \cite{Seregin1, Seregin2} on the open half space  $\{x_d>R+3\}$ to reach
	\begin{equation}
	\label{eqn_vorticity}
	\omega(z)=0 \quad \text {on } (-T_1,0]\times\{x_d>R+3\}.
	\end{equation}
	
	Now we fix a $t_0\in(-T_1,0)$. Take an increasing  sequence $\{t_k\}_{k=0}^{\infty}\subset (-T_1,0)$ converging to $t_0$. For each $k$, we consider equation (\ref{NS1}) with initial data $u_{\infty}(t_k,\cdot)$. By Proposition \ref{prop_StrongSolva}, one can locally find a strong solution
	$$v_k\in C([t_k,t_k+\delta_k);L_d(\mathbb{R}^d_+))$$
	for some small $\delta_k$, and $v_k(t,\cdot)$ is spatial analytic for $t\in (t_k,t_k+\delta_k)$. We may assume that $t_k+\delta_k<t_0$. By the weak-strong uniqueness, $v_k\equiv u_{\infty}$ for $t\in [t_k,t_k+\delta_k)$. Therefore, $\omega(t,\cdot)$ is also spatial analytic for $t\in(t_k,t_k+\delta_k)$. Because of (\ref{eqn_vorticity}), we get
	$$\omega(z)=0 \quad \text {on } (t_k,t_k+\delta_k)\times\mathbb{R}^d_+,$$
	which implies that $u_{\infty}\equiv 0$ in the same region. In particular, we can take a sequence $\{s_k\}$ such that $t_k<s_k<t_k+\delta_k$. Then $\{s_k\}$  converges to $t_0$ and
	$$u_{\infty}(s_k,\cdot)\equiv 0.$$
	This together with the weak continuity of $u_{\infty}$ yields that $u_{\infty}(t_0,\cdot)\equiv 0$. Since $t_0\in (-T_1,0)$  and $T_1\geq 1$ are both arbitrary, we complete the proof of the theorem.
\end{proof}

In the following, we prove the main theorem in the setting that $\Omega=\mathbb{R}^d_+$. The proof also works for the problem on $\Omega=\mathbb{R}^d$.
Again this proof is a modification of \cite[Section 5]{Dong1} by replacing  Schoen's trick with the H\"{o}lder continuity proved in Sections \ref{sec_holder_1} and \ref{sec_holder_2}.
\begin{proof}[Proof of Theorem \ref{thm_main}]
	We prove the theorem in four steps.
	
	\textsl{Step 1.}
First we show that $u$ is regular for $t\in (0,T]$. Owning to Propositions \ref{prop_ukconverge} and \ref{prop_u_infty},
$$u_k\rightarrow 0 \text { in }C([-1/4^2,0];L_{2(2-1/d)}(B^+(1/4))).$$
Hence
$$\limsup_{k\rightarrow \infty} C^+(r,z_0,u_k,p_k)=0$$
for $r\in(0,1/4)$.
Also recall that $D^+(r,z_0,u_k,p_k)$ has a uniform bound for $r\in(0,1)$. Following the proof of Lemma \ref{lem_AECDsmall} we have: for any $\epsilon>0$, there is a $r_0>0$ small and a positive integer $k_0$ such that, for any $z_0\in(-2,0]\times B^+(2)$,
$$A^+(r_0,z_0,u_{k_0},p_{k_0})+E^+(r_0,z_0,u_{k_0},p_{k_0})+D^+(r_0,z_0,u_{k_0},p_{k_0})\leq \epsilon.$$
We choose $\epsilon$ sufficiently small and apply Theorem \ref{thm_holder_half} to get, for some  $\tilde{r}>0$,
$$\sup_{(-\tilde{r}^2,0)\times B^+(\tilde{r})} \abs{u_{k_0}}<\infty,$$
which implies that
$$\sup_{\omega(Z_0,\lambda_{k_0}\tilde{r})}\abs{u}<\infty.$$
This contradicts the assumption that $Z_0=(T_0,X_0)$ is a blowup point. Therefore, $u$ is regular for $t\in (0,T]$.

\textsl{Step 2.} We bound the sup norm of $u$ in this step. Fix some small $\delta\in(0,T)$. Since
$$\norm{u}_{L_{\infty}^tL_d^x((0,T)\times\mathbb{R}^d_+)}\leq N,\quad \norm{p}_{L_{2-1/d,\text{unif}}((0,T)\times\mathbb{R}^d_+)}\leq N,$$
by the same reasoning as at the beginning of the proof of Proposition \ref{prop_u_infty}, we see that there exists a large $R\geq 1$ such that
\begin{equation}
\label{eqn_uouter}
\sup_{[\delta,T)\times (\mathbb{R}^d_+\setminus B^+(R))}\abs{u}\leq N.
\end{equation}
Next we estimate the sup norm of $u$ in $[\delta,T)\times B^+(R)$. Fix a $z_0=(t_0,x_0)$ in  $[\delta,T]\times\bar{B}^+(R)$. In the construction of $u_k$, we replace $(T_0,X_0)$ by $(t_0,x_0)$. By the same reasoning as in the first step, for some $\tilde{r}=\tilde{r}(t_0,x_0)>0$, we have
$$\sup_{\omega(z_0,\tilde{r})}\abs{u}\leq N(t_0,x_0).$$
By the compactness of  $[\delta,T]\times\bar{B}^+(R)$, it holds that
$$\sup_{ [\delta,T]\times\bar{B}^+(R)}\abs{u}\leq N.$$
This together with (\ref{eqn_uouter}) yields
$$\sup_{ [\delta,T]\times \mathbb{R}^d_+}\abs{u}\leq N.$$

\textsl{Step 3.} Since we can choose the $\delta$ in step 2 to be arbitrarily small, we can then utilize the local strong solvability of (\ref{NS1}) to find some $T_1>\delta$ such that $u\in L_{d+2}((0,T_1)\times\mathbb{R}^d_+)$. From Step 2, for $t\in[T_1,T]$ the solution is uniformly bounded and belongs to $L_{\infty}^tL_d^x((T_1,T)\times\mathbb{R}^d_+)$, thus $u\in L_{d+2}((0,T)\times\mathbb{R}^d_+)$. The uniqueness follows from the Ladyzhenskaya-Prodi-Serrin criterion.

\textsl{Step 4.} Now it remains to prove (\ref{eqn_main_thm2}). We use a scaling argument. Let $\lambda>0$ be a constant to be specified later. We define
	$$u_{\lambda}(t,x) = \lambda u(\lambda^2t,\lambda x),\quad p_{\lambda}(t,x) = \lambda^2 p(\lambda^2 t,\lambda x).$$
	Then $(u_{\lambda},p_{\lambda})$ is also a Leray-Hopf weak solution of (\ref{NS1}) in $(0,\infty)\times \mathbb{R}^d_+$, and $u_{\lambda}$ satisfies (\ref{eqn_Ldcondition}) with the same constant $K$ due to the scale invariant property.
Due to the weak continuity of Leray-Hopf weak solutions,
\begin{equation}
\label{eqn53}
\norm{u_{\lambda}(t,\cdot)}_{L_d(\mathbb{R}^d_+)}\leq N,
\end{equation}
for each $t\in(0,\infty)$. By using Lemma \ref{lem2} with $q=2d/(d-2)$ and $r=\infty$, we have
\begin{equation}
\label{eqn54}
\norm{u_{\lambda}}_{L^t_2L^x_{2d/(d-2)}((0,\infty)\times\mathbb{R}^d_+)}\leq N.
\end{equation}
Putting together (\ref{eqn53}) and (\ref{eqn54}) and using H\"{o}lder's inequality yield
\begin{equation*}
\norm{u_{\lambda}}_{L_4((0,\infty)\times\mathbb{R}^d_+)}\leq N.
\end{equation*}
Thus  there  exists $T>0$ such that $$\int_{(T,\infty)\times\bR_+^d}\abs{u_{\lambda}}^4\, dz$$
	is sufficiently small, which together with H\"{o}lder's inequality implies the smallness of $C^+(1,z_0,u_{\lambda},p_{\lambda})$ for $z_0\in (T,\infty)\times\mathbb{R}^d_+$.
Again let $\hat{\epsilon}_0$ denote the  constant in Theorem  \ref{thm_holder_half}. Following the argument in Lemma \ref{lem_AECDsmall} we can find a large $T=T_{\lambda}$ and $r_0>0$ such that
$$ A^+(r_0,z_0,u_{\lambda},p_{\lambda})+E^+(r_0,z_0,u_{\lambda},p_{\lambda})+D^+(r_0,z_0,u_{\lambda},p_{\lambda})\leq \hat{\epsilon}_0$$
for any $z_0\in[T,\infty)\times\mathbb{R}^d_+$. Owing to Theorems \ref{thm_holder_whole} and \ref{thm_holder_half}, we conclude
$$\sup_{[T,\infty)\times\mathbb{R}^d_+}\abs{u_{\lambda}(z)}\leq N,$$
where $N=N(d)$ is independent of $\lambda$. Therefore,
$$\sup_{[\lambda^2T,\infty)\times\mathbb{R}^d_+}\abs{u(z)}\leq N/\lambda.$$
Sending $\lambda\rightarrow\infty$ yields the desired result.
\end{proof}

\section{Proof of Theorem \ref{thm_main3}}
\label{sec_thm2}
In this section, we prove the local result mentioned in Theorem \ref{thm_main3}. Most part of the proof remains the same with last section. We omit some repeated details in similar proofs from last section. We again start with the blow-up procedure: Suppose $(u,p)$ is a pair of Leray-Hopf weak solution  (\ref{NS1})-(\ref{NS2}) on $\R^d_+$ and we study the local problem in $Q^+$. Correspondingly, we modify the notation $\w(z,r) = Q(z,r) \cap Q^+$. Suppose $T_0$ is the first blowup time of $u$ in $\overline{Q^+_{1/4}}$, and $Z_0=(T_0,X_0)=(T_0,X_{0,1},X_{0,2},\ldots,X_{0,d})=(T_0,X'_0,X_{0,d})$ is a singular point in $\overline{Q^+_{1/4}}$. Take a decreasing sequence $\{\lambda_k\}$ converging to 0 and rescale the pair $(u,p)$ at time $T_0$. By defining
$$u_k(t,x) = \lambda_k u(T_0+\lambda_k^2t,X_0+\lambda_kx),\quad p_k(t,x) = \lambda_k^2p(T_0+\lambda_k^2t,X_0+\lambda_kx),$$
for each $k=1,2,\ldots$, $(u_k,p_k)$ is a suitable weak solution of (\ref{NS1})-(\ref{NS2}) 
for $t\in (-\lambda_k^{-2}T_0,0)$.
The first observation is the property of uniform boundedness of $C^+$ and $D^+$ after the rescaling.
\begin{lemma}
	\label{lem_bdd_local}
	Under the conditions in Theorem \ref{thm_main3}, there exists $N>0$  such that for any $z_0\in (-\infty,0]\times \R^d_{(+)}$ and $0<r\leq 1$,
	\begin{equation*}
	\limsup_{k\rightarrow \infty} C^{+}(r,z_0,u_k,p_k)\leq N,
	\end{equation*}
	and
	\begin{equation}
	\label{eqn_Dbdd_local}
	\limsup_{k\rightarrow \infty}D^{+}(r,z_0,u_k,p_k)\leq N.
	\end{equation}
\end{lemma}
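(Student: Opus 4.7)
The argument runs parallel to Lemma \ref{lem_bdd}, the only real difference being that the uniform local pressure hypothesis (\ref{presure_bd}) is replaced by the single global bound $\|p\|_{L_{2-1/d}(Q^+)}<\infty$. Since $Z_0\in\overline{Q^+_{1/4}}$ and $\lambda_k\downarrow 0$, for every fixed $z_0\in(-\infty,0]\times\R^d_{(+)}$ and $r\in(0,1]$ the shifted point $z_0^k:=(T_0+\lambda_k^2 t_0, X_0+\lambda_k x_0)$ converges to $Z_0$, and $\w(z_0^k,\lambda_k r)\subset Q^+$ for all $k$ sufficiently large. The $C^+$ bound is then immediate: scale invariance gives $C^+(r,z_0,u_k,p_k)=C^+(\lambda_k r,z_0^k,u,p)$, and the H\"older chain from (\ref{eqn56}) dominates the right-hand side by $N\|u\|_{L^t_\infty L^x_d(Q^+)}^{2(2d-1)/d}$.

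For (\ref{eqn_Dbdd_local}), the new ingredient is an anchor pressure bound at a single absolute scale. Fix $\rho_0\in(0,1/2]$ so that both $Q^+(Z_0,\rho_0)\subset Q^+$ and $Q^+(\hat Z_0,\rho_0)\subset Q^+$, where $\hat Z_0=(T_0,X_0',0)$ is the projection of $Z_0$ onto $\{x_d=0\}$; such an absolute $\rho_0$ exists because $Z_0\in\overline{Q^+_{1/4}}$. By Jensen's inequality applied to the mean values and the triangle inequality for $p-[p]_{\cdot,\rho_0}$,
\begin{equation*}
D^+(\rho_0,Z_0,u,p)+D^+(\rho_0,\hat Z_0,u,p)\leq N\|p\|_{L_{2-1/d}(Q^+)}^{(2d-1)/d}\leq N.
\end{equation*}
Combined with the $C^+$ control on every small scale already established, Corollary \ref{col_CD} and Corollary \ref{col6_half} then deliver uniform $D$ and $D^+$ bounds on all smaller cylinders centered at points of $Q(Z_0,\rho_0/2)$, respectively $Q(\hat Z_0,\rho_0/2)\cap\{x_d=0\}$.

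With these anchors in hand, we replicate cases (ii) and (iii) of Lemma \ref{lem_bdd} verbatim: depending on whether $X_{0,d}>0$ or $X_{0,d}=0$, and on the relative sizes of $\lambda_k r$ and the distance from $x_0^k$ to $\{x_d=0\}$, either $\w(z_0^k,\lambda_k r)$ sits away from the boundary (in which case Corollary \ref{col_CD} applied at $Z_0$ or its interior proxy suffices), or it sits near/on it (in which case we first dominate $D$ at $z_0^k$ by $D^+$ at the projection $\hat z_0^k$ through the triangle inequality (\ref{eqn_62}), then apply Corollary \ref{col6_half}). In each branch the resulting bound depends only on $d$, $\|u\|_{L^t_\infty L^x_d(Q^+)}$, and $\|p\|_{L_{2-1/d}(Q^+)}$. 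The main (and essentially only) point that needs to be checked, rather than copied, is the anchor bound for $D^+$ from a \emph{global} $L_{2-1/d}$ hypothesis on $p$; but since $\rho_0$ is an absolute constant this is elementary.
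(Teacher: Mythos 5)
Your proposal is correct and takes essentially the same route as the paper: the $C^+$ bound follows the same H\"older chain as in Lemma \ref{lem_bdd}, and the $D^+$ bound is obtained by the identical case split (whether $X_{0,d}>0$, and whether $\lambda_k r$ exceeds the distance of $x_0^k$ to $\{x_d=0\}$) combined with Corollaries \ref{col_CD} and \ref{col6_half} to propagate from a single anchor scale. The only substantive change from Lemma \ref{lem_bdd} is the anchor itself, which in the paper is also $D^+(1/2,\hat Z_0,u,p)\leq N\|p\|_{L_{2-1/d}(Q^+)}^{2-1/d}$; you identify exactly this point.
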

\begin{proof}
	Following the proof of Lemma \ref{lem_bdd}, we easily have
	\begin{equation*}
	C^+(\lambda_kr,z_0^k,u,p)\leq N\norm{u}_{L_{\infty}^tL_d^x(\w(z_0^k,\lambda_kr))}\leq N,
	\end{equation*}
To prove (\ref{eqn_Dbdd_local}), we consider three cases on whether $Z_0$ is on the flat boundary and the shape of the $\w(z_0^k,\lambda_k r)$:

\textsl{i)} Consider when $Z_0\in Q^+_{1/4}$, i.e.,  $X_{0,d}>0$. Based on the blow-up procedure, the $u_k$'s are defined on a larger and larger domain which eventually expands to the whole space, so we can always assume $u_k$ has definition on $Q(z_0,r)$ when $k$ is large. Denote $\hat{Z}_0 = (X_0',0,T_0)$ to be the projection of $Z_0$ on the flat boundary and $\hat{z}_0^k$ to be the projection of $z_0^k$. Note when $k$ is large,  $\lambda_k r\ll X_{0,d}$, we have $Q(z_0^k,\lambda_k r)\subset Q(Z_0,X_{0,d})\subset Q^+(\hat{Z}_0,2X_{0,d}) \subset Q^+(\hat{Z}_0,1/2)\subset Q^+$. From Corollaries \ref{col_CD} and \ref{col6_half}, we know that
\begin{align*}
D(r,z_0,u_k,p_k) &= D(\lambda_k r, z_0^k,u,p) \nonumber \\
& \leq N D(X_{0,d},Z_0,u,p)+C \nonumber\\
& \leq N D^+(2X_{0,d},\hat{Z}_0,u,p)+C \nonumber \\
& \leq N D^+(1/2,\hat{Z}_0,u,p)+C\nonumber \\
& \leq N\norm{p}_{L_{2-1/d}(Q^+)}^{2-1/d}+C.
\end{align*}

\textsl{ii)} Consider when $Z_0\in Q^+_{1/4}$,  $X_{0,d}=0$, and $r\geq x_{0,d}$. When $k$ is large, we have $ \w(z_0^k,\lambda_k r)\subset Q^+(\hat{z}^k_0,2\lambda_k r) \subset Q^+(Z_0,1/2)\subset Q^+$. From the proof of Corollary \ref{col6_half} we have
\begin{align*}
	D^+(r,z_0,u_k,p_k) &= D^+(\lambda_k r, z_0^k, u,p) \nonumber \\
	& \leq ND^+(2\lambda_k r,\hat{z}_0^k,u,p)\nonumber \\
	& \leq ND^+(1/2,Z_0,u,p)+C\nonumber \\
	& \leq N\norm{p}_{L_{2-1/d}(Q^+)}^{2-1/d}+C.
\end{align*}

\textsl{iii)} Consider when $Z_0\in Q^+_{1/4}$,  $X_{0,d}=0$ and $r< x_{0,d}$. When $k$ is large, we have $\w(z_0^k,\lambda_k r)=Q(z_0^k,\lambda_k r)\subset Q(\hat{z}_0^k,\lambda_k x_{0,d}) \subset Q^+(Z_0,1/2)\subset Q^+$. From the proofs of Corollaries \ref{col_CD} and \ref{col6_half} we have
\begin{align*}
	D^+(r,z_0,u_k,p_k) &= D(\lambda_k r, z_0^k, u,p) \nonumber \\
	& \leq ND(\lambda_k x_{0,d},z_0^k,u,p)+C\nonumber \\
	& \leq ND(2\lambda_k x_{0,d},\hat{z}_0^k,u,p)+C\nonumber \\
	& \leq ND^+(1/2,Z_0,u,p)+C\nonumber \\
	& \leq N\norm{p}_{L_{2-1/d}(Q^+)}^{2-1/d}+C.
\end{align*}
Therefore, $D^+(r,z_0,u_k,p_k)$ is uniformly bounded by $\norm{p}_{L_{2-1/d}(Q^+)}$ plus some constant.
\end{proof}

	After we proved the boundedness, we can again prove the results of  Proposition \ref{prop_ukconverge}, Corollary \ref{col_Csmall}, Lemma \ref{lem_AECDsmall} and Proposition \ref{prop_u_infty} in the same fashion as previous. Here we show the proof of Theorem \ref{thm_main3} in the boundary case $X_{0,d}=0$. The interior case is similar.
\begin{proof}[Proof of Theorem \ref{thm_main3}]
	Owing to Propositions \ref{prop_ukconverge} and \ref{prop_u_infty}, we have
	$$u_k\rightarrow 0 \text { in }C([-1/4^2,0];L_{2(2-1/d)}(B^+(1/4))).$$
	Hence
	$$\limsup_{k\rightarrow \infty} C^+(r,z_0,u_k,p_k)=0$$
	for $r\in(0,1/4)$.
	Also recall that $D^+(r,z_0,u_k,p_k)$ has a uniform bound for $r\in(0,1/4)$. Following the proof of Lemma \ref{lem_AECDsmall} we have: for any $\epsilon>0$, there is a $r_0>0$ small and a positive integer $k_0$ such that, for any $z_0\in(-1/4^2,0]\times B^+(1/4)$,
	$$A^+(r_0,z_0,u_{k_0},p_{k_0})+E^+(r_0,z_0,u_{k_0},p_{k_0})+D^+(r_0,z_0,u_{k_0},p_{k_0})\leq \epsilon,$$
	which implies
	$$A^+(\lambda_{k_0} r_0,z_0^{k_0},u,p)+E^+(\lambda_{k_0} r_0,z_0^{k_0},u,p)+D^+(\lambda_{k_0} r_0,z_0^{k_0},u,p)\leq \epsilon,$$
	We choose $\epsilon$ sufficiently small and apply Theorem \ref{thm_holder_half} to deduce that $u$ is H\"{o}lder continuous around $Z_0$  since $z_0^{k_0}$ is an arbitrary point around $Z_0$. We reach a contradiction and  finish the proof.
\end{proof}

\bibliographystyle{amsplain}

\begin{thebibliography}{10}

\bibitem{2018arXiv180203164A}
D.~{Albritton} and T.~{Barker}, \emph{{Global weak Besov solutions of the
  Navier-Stokes equations and applications}}, ArXiv e-prints (2018).

\bibitem{MR3803715}
Dallas Albritton, \emph{Blow-up criteria for the {N}avier-{S}tokes equations in
  non-endpoint critical {B}esov spaces}, Anal. PDE \textbf{11} (2018), no.~6,
  1415--1456. \MR{3803715}

\bibitem{MR3713543}
T.~Barker and G.~Seregin, \emph{A necessary condition of potential blowup for
  the {N}avier-{S}tokes system in half-space}, Math. Ann. \textbf{369} (2017),
  no.~3-4, 1327--1352. \MR{3713543}

\bibitem{Caf1}
L.~Caffarelli, R.~Kohn, and L.~Nirenberg, \emph{Partial regularity of suitable
  weak solutions of the {N}avier-{S}tokes equations}, Comm. Pure Appl. Math.
  \textbf{35} (1982), no.~6, 771--831. \MR{673830}

\bibitem{Refer2}
A.~Cheskidov and R.~Shvydkoy, \emph{The regularity of weak solutions of the
  3{D} {N}avier-{S}tokes equations in {$B^{-1}_{\infty,\infty}$}}, Arch.
  Ration. Mech. Anal. \textbf{195} (2010), no.~1, 159--169. \MR{2564471}

\bibitem{Refer2b}
H.~Dong and D.~Du, \emph{Partial regularity of solutions to the
  four-dimensional {N}avier-{S}tokes equations at the first blow-up time},
  Comm. Math. Phys. \textbf{273} (2007), no.~3, 785--801. \MR{2318865}

\bibitem{Dong1}
\bysame, \emph{The {N}avier-{S}tokes equations in the critical {L}ebesgue
  space}, Comm. Math. Phys. \textbf{292} (2009), no.~3, 811--827. \MR{2551795}

\bibitem{Dong2}
H.~Dong and X.~Gu, \emph{Boundary partial regularity for the high dimensional
  {N}avier-{S}tokes equations}, J. Funct. Anal. \textbf{267} (2014), no.~8,
  2606--2637. \MR{3255469}

\bibitem{Dong3}
H.~Dong and D.~Li, \emph{Optimal local smoothing and analyticity rate estimates
  for the generalized {N}avier-{S}tokes equations}, Commun. Math. Sci.
  \textbf{7} (2009), no.~1, 67--80. \MR{2512833}

\bibitem{Seregin1}
L.~Escauriaza, G.~A. Seregin, and V.~\v{S}ver\'{a}k,
  \emph{{$L_{3,\infty}$}-solutions of {N}avier-{S}tokes equations and backward
  uniqueness}, Uspekhi Mat. Nauk \textbf{58} (2003), no.~2(350), 3--44.
  \MR{1992563}

\bibitem{Seregin2}
\bysame, \emph{On backward uniqueness for parabolic equations}, Arch. Ration.
  Mech. Anal. \textbf{169} (2003), no.~2, 147--157. \MR{2005639}

\bibitem{MR3475661}
Isabelle Gallagher, Gabriel~S. Koch, and Fabrice Planchon, \emph{Blow-up of
  critical {B}esov norms at a potential {N}avier-{S}tokes singularity}, Comm.
  Math. Phys. \textbf{343} (2016), no.~1, 39--82. \MR{3475661}

\bibitem{iter}
M.~Giaquinta, \emph{Multiple integrals in the calculus of variations and
  nonlinear elliptic systems}, Annals of Mathematics Studies, vol. 105,
  Princeton University Press, Princeton, NJ, 1983. \MR{717034}

\bibitem{Refer8}
Y.~Giga, \emph{Solutions for semilinear parabolic equations in {$L^p$} and
  regularity of weak solutions of the {N}avier-{S}tokes system}, J.
  Differential Equations \textbf{62} (1986), no.~2, 186--212. \MR{833416}

\bibitem{Giga1}
Y.~Giga and T.~Miyakawa, \emph{Solutions in {$L_r$} of the {N}avier-{S}tokes
  initial value problem}, Arch. Rational Mech. Anal. \textbf{89} (1985), no.~3,
  267--281. \MR{786550}

\bibitem{Giga2}
Y.~Giga and O.~Sawada, \emph{On regularizing-decay rate estimates for solutions
  to the {N}avier-{S}tokes initial value problem}, Nonlinear analysis and
  applications: to {V}. {L}akshmikantham on his 80th birthday. {V}ol. 1, 2,
  Kluwer Acad. Publ., Dordrecht, 2003, pp.~549--562. \MR{2060233}

\bibitem{Hopf1}
E.~Hopf, \emph{\"uber die {A}nfangswertaufgabe f\"ur die hydrodynamischen
  {G}rundgleichungen}, Math. Nachr. \textbf{4} (1951), 213--231. \MR{0050423}

\bibitem{analyticity}
C.~Kahane, \emph{On the spatial analyticity of solutions of the
  {N}avier-{S}tokes equations}, Arch. Rational Mech. Anal. \textbf{33} (1969),
  386--405. \MR{0245989}

\bibitem{Kato1}
T.~Kato, \emph{Strong {$L^{p}$}-solutions of the {N}avier-{S}tokes equation in
  {${\bf R}^{m}$}, with applications to weak solutions}, Math. Z. \textbf{187}
  (1984), no.~4, 471--480. \MR{760047}

\bibitem{MR2784068}
Carlos~E. Kenig and Gabriel~S. Koch, \emph{An alternative approach to
  regularity for the {N}avier-{S}tokes equations in critical spaces}, Ann.
  Inst. H. Poincar\'{e} Anal. Non Lin\'{e}aire \textbf{28} (2011), no.~2,
  159--187. \MR{2784068}

\bibitem{Koch1}
H.~Koch and D.~Tataru, \emph{Well-posedness for the {N}avier-{S}tokes
  equations}, Adv. Math. \textbf{157} (2001), no.~1, 22--35. \MR{1808843}

\bibitem{Refer15}
O.~A. Lady\v{z}enskaja, \emph{Uniqueness and smoothness of generalized
  solutions of {N}avier-{S}tokes equations}, Zap. Nau\v cn. Sem. Leningrad.
  Otdel. Mat. Inst. Steklov. (LOMI) \textbf{5} (1967), 169--185. \MR{0236541}

\bibitem{Refer18}
O.~A. Ladyzhenskaya and G.~A. Seregin, \emph{On partial regularity of suitable
  weak solutions to the three-dimensional {N}avier-{S}tokes equations}, J.
  Math. Fluid Mech. \textbf{1} (1999), no.~4, 356--387. \MR{1738171}

\bibitem{Leray1}
J.~Leray, \emph{\'etude de diverses \'equations int\'egrales non lin\'eaires et
  de quelques probl\`emes que pose l'hydrodynamique}, NUMDAM, [place of
  publication not identified], 1933. \MR{3533002}

\bibitem{Lieberman1}
Gary~M. Lieberman, \emph{Second order parabolic differential equations}, World
  Scientific Publishing Co., Inc., River Edge, NJ, 1996. \MR{1465184}

\bibitem{Refer21}
F.~Lin, \emph{A new proof of the {C}affarelli-{K}ohn-{N}irenberg theorem},
  Comm. Pure Appl. Math. \textbf{51} (1998), no.~3, 241--257. \MR{1488514}

\bibitem{Refer29}
P.~Maremonti and V.~A. Solonnikov, \emph{On estimates for the solutions of the
  nonstationary {S}tokes problem in {S}. {L}. {S}obolev anisotropic spaces with
  a mixed norm}, Zap. Nauchn. Sem. S.-Peterburg. Otdel. Mat. Inst. Steklov.
  (POMI) \textbf{222} (1995), no.~Issled. po Line\u\i n. Oper. i Teor.
  Funktsi\u\i . 23, 124--150, 309. \MR{1359996}

\bibitem{analyticity1}
K.~Masuda, \emph{On the analyticity and the unique continuation theorem for
  solutions of the {N}avier-{S}tokes equation}, Proc. Japan Acad. \textbf{43}
  (1967), 827--832. \MR{0247304}

\bibitem{Refer19}
A.~S. Mikhailov and T.~N. Shilkin, \emph{{$L_{3,\infty}$}-solutions to the
  3{D}-{N}avier-{S}tokes system in the domain with a curved boundary}, Zap.
  Nauchn. Sem. S.-Peterburg. Otdel. Mat. Inst. Steklov. (POMI) \textbf{336}
  (2006), no.~Kraev. Zadachi Mat. Fiz. i Smezh. Vopr. Teor. Funkts. 37,
  133--152, 276. \MR{2270883}

\bibitem{Refer22}
G.~Prodi, \emph{Un teorema di unicit\`a per le equazioni di {N}avier-{S}tokes},
  Ann. Mat. Pura Appl. (4) \textbf{48} (1959), 173--182. \MR{0126088}

\bibitem{Refer23}
V.~Scheffer, \emph{Partial regularity of solutions to the {N}avier-{S}tokes
  equations}, Pacific J. Math. \textbf{66} (1976), no.~2, 535--552.
  \MR{0454426}

\bibitem{Refer24}
\bysame, \emph{Hausdorff measure and the {N}avier-{S}tokes equations}, Comm.
  Math. Phys. \textbf{55} (1977), no.~2, 97--112. \MR{0510154}

\bibitem{Refer25}
\bysame, \emph{The {N}avier-{S}tokes equations on a bounded domain}, Comm.
  Math. Phys. \textbf{73} (1980), no.~1, 1--42. \MR{573611}

\bibitem{Refer24b}
G.~A. Seregin, \emph{Some estimates near the boundary for solutions to the
  non-stationary linearized {N}avier-{S}tokes equations}, Zap. Nauchn. Sem.
  S.-Peterburg. Otdel. Mat. Inst. Steklov. (POMI) \textbf{271} (2000),
  no.~Kraev. Zadachi Mat. Fiz. i Smezh. Vopr. Teor. Funkts. 31, 204--223, 317.
  \MR{1810618}

\bibitem{Refer23b}
\bysame, \emph{Local regularity of suitable weak solutions to the
  {N}avier-{S}tokes equations near the boundary}, J. Math. Fluid Mech.
  \textbf{4} (2002), no.~1, 1--29. \MR{1891072}

\bibitem{Seregin3}
\bysame, \emph{On smoothness of {$L_{3,\infty}$}-solutions to the
  {N}avier-{S}tokes equations up to boundary}, Math. Ann. \textbf{332} (2005),
  no.~1, 219--238. \MR{2139258}

\bibitem{Refer26b}
\bysame, \emph{A note on local boundary regularity for the {S}tokes system},
  Zap. Nauchn. Sem. S.-Peterburg. Otdel. Mat. Inst. Steklov. (POMI)
  \textbf{370} (2009), no.~Kraevye Zadachi Matematichesko\u\i \ Fiziki i
  Smezhnye Voprosy Teorii Funktsi\u\i . 40, 151--159, 221--222. \MR{2749216}

\bibitem{Refer27b}
G.~A. Seregin, T.~N. Shilkin, and V.~A. Solonnikov, \emph{Boundary partial
  regularity for the {N}avier-{S}tokes equations}, Zap. Nauchn. Sem.
  S.-Peterburg. Otdel. Mat. Inst. Steklov. (POMI) \textbf{310} (2004),
  no.~Kraev. Zadachi Mat. Fiz. i Smezh. Vopr. Teor. Funkts. 35 [34], 158--190,
  228. \MR{2120190}

\bibitem{Refer28}
J.~Serrin, \emph{On the interior regularity of weak solutions of the
  {N}avier-{S}tokes equations}, Arch. Rational Mech. Anal. \textbf{9} (1962),
  187--195. \MR{0136885}

\bibitem{Refer30}
\bysame, \emph{The initial value problem for the {N}avier-{S}tokes equations},
  Nonlinear {P}roblems ({P}roc. {S}ympos., {M}adison, {W}is., 1962), Univ. of
  Wisconsin Press, Madison, Wis., 1963, pp.~69--98. \MR{0150444}

\bibitem{Refer31}
M.~Struwe, \emph{On partial regularity results for the {N}avier-{S}tokes
  equations}, Comm. Pure Appl. Math. \textbf{41} (1988), no.~4, 437--458.
  \MR{933230}

\bibitem{Taylor1}
M.~E. Taylor, \emph{Analysis on {M}orrey spaces and applications to
  {N}avier-{S}tokes and other evolution equations}, Comm. Partial Differential
  Equations \textbf{17} (1992), no.~9-10, 1407--1456. \MR{1187618}

\bibitem{Wahl1}
W.~von Wahl, \emph{The equations of {N}avier-{S}tokes and abstract parabolic
  equations}, Aspects of Mathematics, E8, Friedr. Vieweg \& Sohn, Braunschweig,
  1985. \MR{832442}

\bibitem{MR3629487}
WenDong Wang and ZhiFei Zhang, \emph{Blow-up of critical norms for the 3-{D}
  {N}avier-{S}tokes equations}, Sci. China Math. \textbf{60} (2017), no.~4,
  637--650. \MR{3629487}

\bibitem{local_sol1}
F.~B. Weissler, \emph{The {N}avier-{S}tokes initial value problem in
  {$L^{p}$}}, Arch. Rational Mech. Anal. \textbf{74} (1980), no.~3, 219--230.
  \MR{591222}

\end{thebibliography}
\providecommand{\bysame}{\leavevmode\hbox to3em{\hrulefill}\thinspace}
\providecommand{\MR}{\relax\ifhmode\unskip\space\fi MR }
\providecommand{\MRhref}[2]{%
  \href{http://www.ams.org/mathscinet-getitem?mr=#1}{#2}
}
\providecommand{\href}[2]{#2}

\end{document}